\documentclass[12pt,a4paper]{amsart}

\usepackage{amssymb}
\usepackage{amscd}
\usepackage{color}

\makeatletter
\@namedef{subjclassname@2020}{\textup{2020} Mathematics Subject Classification}
\makeatother

\setlength{\textwidth}{418pt}
\setlength{\oddsidemargin}{17.5pt}
\setlength{\evensidemargin}{17.5pt}

\def\frak{\mathfrak}
\def\Bbb{\mathbb}
\def\Cal{\mathcal}

\let\phi\varphi

\newcommand{\tr}{\operatorname{tr}}

\newcommand{\x}{\times}
\renewcommand{\o}{\circ}

\newcommand{\al}{\alpha}

\newcommand{\ep}{\epsilon}

\newcommand{\la}{\lambda}
\newcommand{\om}{\omega}
\newcommand{\ph}{\phi}
\newcommand{\ps}{\psi}

\newcommand{\si}{\sigma}
\newcommand{\ze}{\zeta}
\newcommand{\Ga}{\Gamma}
\newcommand{\La}{\Lambda}
\newcommand{\Ph}{\Phi}
\newcommand{\Ps}{\Psi}
\newcommand{\Om}{\Omega}

\newcommand{\Up}{\Upsilon}
\def\Rho{\mbox{\textsf{P}}}
\newcommand{\bep}{\boldsymbol{\epsilon}}

\newcommand{\barm}{\overline{M}}
\newcommand{\tPsi}{\widetilde{\Psi}}
\newcommand{\vol}{\operatorname{vol}}
\newcommand{\im}{\operatorname{im}}
\newcommand{\id}{\operatorname{id}}

\newcommand{\Diff}{\operatorname{Diff}}
\newcommand{\Conf}{\operatorname{Conf}}
\newcommand{\Fl}{\operatorname{Fl}}

\newcommand{\rpl}                         % +) or <+
{\mbox{$
\begin{picture}(12.7,8)(-.5,-1)
\put(0,0.2){$+$}
\put(4.2,2.8){\oval(8,8)[r]}
\end{picture}$}}

\numberwithin{equation}{section}

\newcounter{theorem}
\numberwithin{theorem}{section}
\newtheorem{thm}[theorem]{Theorem}
\newtheorem*{thm*}{Theorem \thesubsection}
\newtheorem{lemma}[theorem]{Lemma}
\newtheorem{prop}[theorem]{Proposition}
\newtheorem{cor}[theorem]{Corollary}
\newtheorem*{lemma*}{Lemma \thesubsection}
\newtheorem*{prop*}{Proposition \thesubsection}
\newtheorem*{cor*}{Corollary \thesubsection}

\theoremstyle{definition}
\newtheorem{definition}[theorem]{Definition}
\newtheorem*{definition*}{Definition \thesubsection}

\newtheorem*{example*}{Example \thesubsection}
\theoremstyle{remark}
\newtheorem{remark}[theorem]{Remark}
\newtheorem*{remark*}{Remark \thesubsection}

\def\sideremark#1{\ifvmode\leavevmode\fi\vadjust{\vbox to0pt{\vss% the remark
 \hbox to 0pt{\hskip\hsize\hskip1em%                          will appear only
 \vbox{\hsize3cm\tiny\raggedright\pretolerance10000%          on the side
  \noindent #1\hfill}\hss}\vbox to8pt{\vfil}\vss}}}%
                                                   %          in 3cm
                        
                                                   %          wide box
                                                   %          

\begin{document}
%\renewcommand{\today}{}
%\title{Hyperbolic Mass and Tractors}
\title{A boundary-local mass cocycle and the mass of asymptotically
  hyperbolic manifolds}

\author{Andreas \v Cap, and A.\ Rod Gover}

\address{A.\v C.: Faculty of Mathematics\\
University of Vienna\\
Oskar--Morgenstern--Platz 1\\
1090 Wien\\
Austria\\
A.R.G.:Department of Mathematics\\
  The University of Auckland\\
  Private Bag 92019\\
  Auckland 1142\\
  New Zealand}
\email{Andreas.Cap@univie.ac.at}
\email{r.gover@auckland.ac.nz}

\begin{abstract}
We construct a cocycle that, for a given $n$-manifold, maps pairs of
asymptotically locally hyperbolic (ALH) metrics to a tractor-valued
$(n-1)$-form field on the conformal infinity.
This
requires the metrics to be asymptotically related to a given order
that depends on the dimension. It then provides a local geometric
quantity on the boundary that is naturally associated to the pair and
can be interpreted as a relative energy-momentum density.
%%This describes, locally
%%on the boundary, a relative mass difference between the pairs of ALH
%%metrics, where the latter are required to be asymptotically related to
%%a given order that depends on the dimension.
It is distinguished as a
geometric object by its property of being invariant under suitable
diffeomorphisms fixing the boundary, and that act on
(either) one of the argument metrics.

Specialising to the case of an ALH metric $h$ that is suitably
asymptotically related to a locally hyperbolic conformally compact
metric, we show that the cocycle determines an absolute invariant
$c(h)$, which still is
local in nature. This tractor-valued $(n-1)$-form field on the conformal
infinity is canonically associated to $h$ (i.e. is not dependent on
other choices) and is equivariant under the appropriate
diffeomorphisms.

Finally specialising further to the case that the boundary is a sphere and
that a metric $h$ is asymptotically related to a hyperbolic metric on
the interior, we show that the invariant $c(h)$ can be integrated
over the boundary. The result pairs with solutions of the KID (Killing
initial data) equation to recover the known description of hyperbolic
mass integrals of Wang, and Chru\'{s}ciel--Herzlich.
  \end{abstract}

\subjclass[2020]{primary: 53A55; secondary: 53C18, 53C25, 53C80,  83C30, 83C60}
\keywords{mass in GR, mass aspect, asymptotically hyperbolic manifolds, geometric
  invariants, tractor calculus, conformally compact manifolds}

\thanks{A.\v C.\ gratefully acknowledges support by the Austrian Science Fund (FWF):
  grant DOI 10.55776/P33559 and the hospitality of the University of Auckland. A.R.G.\ gratefully
  acknowledges support from the Royal Society of New Zealand via Marsden Grants
  16-UOA-051 and 19-UOA-008. We thank P.\ Chru\'sciel for very helpful
  discussions. We also thank the anonymous referee for several
    suggestions which helped to improve the article.}

\maketitle

\pagestyle{myheadings} \markboth{\v Cap, Gover}{Mass and Tractors}

\section{Introduction}\label{1}

In general relativity, and a number of related mathematical studies,
the notion of a ``mass'' invariant for relevant geometric manifolds is
extremely important and heavily studied \cite{Bartnik,LP,S-Yau}.  In
general defining and interpreting a suitable notion of mass is not
straightforward. For so-called asymptotically flat manifolds, the
Arnowitt-Deser-Misner (ADM) energy-momentum is well established and is
usually accepted as the correct definition.  Motivated by the desire
to define and study mass in other settings X.\ Wang \cite{Wang} and
P.T.\ Chru\'sciel and M.\ Herzlich \cite{Chrusciel-Herzlich}
introduced a notion of mass integrals and ``energy-momentum'' for
Riemannian manifolds that, in a suitable sense, are asymptotically
hyperbolic. These have immediate applications for a class of static
spacetimes. The mass integrals are parametrized by solutions of the
KID (Killing initial data) equation for the hyperbolic background.
Denoting by $n$ the interior dimension, these solutions form an
$n+1$-dimensional vector space that is endowed with a natural
Lorentzian metric. This leads to the interpretation that the mass in
the asymptotically hyperbolic setting is a vector in that
$n+1$-dimensional space rather than just a number.

The aim of this article is to construct new invariants that capture a
notion of mass density, in the setting of asymptotically hyperbolic
metrics.
%% These invariants are local as quantities on the boundary at
%% infinity and they specialize to recover (by an integration procedure)
%% the mass as introduced by Wang and Chru\'sciel--Herzlich.
These invariants are locally defined (as quantities on the boundary at
infinity) volume forms with values in the standard tractor bundle
associated to a conformal structure on the boundary, which is has rank
$n+1$ and carries a natural Lorentzian bundle metric.  In the special
case of hyperbolic space as a background, we prove that these local
quantities can be integrated to global parallel sections of the
tractor bundle and then naturally recover the mass as introduced by
Wang and Chru\'sciel--Herzlich.
The setting
we work in is rather restrictive in some aspects but very general in
other aspects. The main restriction is that we are working in a
conformally compact setting, so we need strong assumptions on the
order of asymptotics.  On the other hand, underlying this is an
arbitrary manifold with boundary with no restrictions on the topology
of the boundary. We also allow a fairly general ``background metric'';
the core of our results only require a background metric that is
asymptotically locally hyperbolic (ALH), as in Definition
\ref{def2.2}, part (1).  The main invariant we construct is associated
to a pair of ALH metrics (that are asymptotic to sufficient order), so
it should be thought of as a \textit{relative local mass} or as a
\textit{local mass difference}.

Our basic setting looks as follows. We start with an arbitrary
manifold $\barm$ of dimension $n\geq 3$, with boundary $\partial M$
and interior $M$. Given two conformally compact metrics $g$ and $h$ on
$M$, there is a well defined notion of $g$ and $h$ approaching each
other asymptotically to certain orders towards the boundary. The
actual order we need depends on the dimension and specializes to the
order required in \cite{Wang} on hyperbolic space. This defines an
equivalence relation on conformally compact metrics and we consider
one equivalence class $\Cal G$ of such metrics. The only additional
requirement at this point is that $\Cal G$ consists of
ALH-metrics. Since $\Cal G$ consists of conformally compact metrics,
each $g\in\Cal G$ gives rise to a conformal structure on $\partial M$,
called the conformal infinity of $g$. Moreover, the asymptotic
condition used to define $\Cal G$ is strong enough to ensure that all
metrics in $\Cal G$ lead to the same conformal infinity. Thus $\Cal G$
canonically determines a conformal structure $[\Cal G_\infty]$ on $\partial
M$.

This last fact is crucial for the
further development, since the invariants we construct are geometric
objects for this conformal structure on $\partial M$ (and a slightly
stronger structure in case that $\dim(\barm)=3$). Indeed, the conformal
structure $[\Cal G_\infty]$ canonically determines the so-called
\textit{standard tractor bundle} $\Cal T\partial M\to\partial M$. This
is a vector bundle of rank $n+1$ endowed with a Lorentzian bundle
metric and a metric linear connection \cite{BEG}. We construct
\textit{cocycles} $c$ that associate to each pair of metrics
$g,h\in\Cal G$ a tractor-valued $(n-1)$-form
$c(g,h)\in\Om^{n-1}(\partial M,\Cal T\partial M)$, with the property
that $c(h,g)=-c(g,h)$ and $c(g,k)=c(g,h)+c(h,k)$ for all $g,h,k\in\Cal
G$. These cocycles are the basic ``relative local masses'' we
consider.

The idea of the construction is that any metric $g\in\Cal G$
determines a conformal class on $\barm$ (since the conformal class of
$g$ on $M$ extends to the boundary by conformal compactness) and this
restricts to $[\Cal G_\infty]$ at the boundary. Working with this
conformal structure, we can use standard techniques of tractor
calculus, and an instance of what is called a BGG splitting operator,
to associate to any $h\in\Cal G$ a one-form with values in the tractor
bundle $\Cal T\barm$. The Hodge dual (with respect to $g$) of this
one-form is then shown to admit a smooth extension of the boundary
whose boundary value is defined to be $c(g,h)$, see Propositions
\ref{prop3.1} and \ref{prop3.2}. The proof that this actually defines
a cocycle needs some care because of the use of different conformal
structures on $\barm$, but is otherwise straightforward.  The idea to
seek objects valued in $\Cal T\barm$ was motivated by a desire to link
to the KID (Killing initial data) equation, and its solutions, which
(as we discuss later), has a nice interpretation in the tractor
picture. This, with the conformal compactification, leads to the need
to use conformal tractors and tools that are conformally invariant so
that they extend in a simple and practical way to the boundary. The
BGG splitting operators we use are then the unique conformally
invariant operators available to extract the required jet data and
include this is the required bundles.  The use of tractors and
conformally invariant operators is also crucial for allowing us to
work locally (along the boundary), which is an essential feature of
our approach.

Initially, this leads to a two parameter family of cocycles since there
are two constructions of the above type, one depending on the trace of
the difference $g-h$, the other on its trace-free part. Our
constructions do not require charts or coordinates, so, in that sense,
 are automatically geometric in nature. The constructions also
readily imply equivariancy with respect to the appropriate diffeomorphisms. If
$\Ph$ is a diffeomorphism of $\barm$ which preserves the class $\Cal
G$ (in an obvious sense) then by definition the restriction
$\Ph|_{\partial M}$ is a conformal isometry for $[\Cal G_\infty]$. Therefore,
it naturally acts on sections of $\Cal T\partial M$ (and of course on 
forms on $\partial M$) and for any of the cocycles
$c(\Phi^*g,\Phi^*h)=(\Phi|_{\partial M})^*c(g,h)$, see Proposition
\ref{prop3.3}.

There is a much more subtle compatibility condition with
diffeomorphisms, however. Indeed, consider a diffeomorphism $\Psi$
that is compatible with $[\Cal G_\infty]$ and suppose that $\Psi|_{\partial
  M}=\id_{\partial M}$. Then it turns out that $\Psi$ is asymptotic to
the identity of order $n+1$ in a well-defined sense, see Section
\ref{3.4} and Theorem \ref{thm3.5}. The main technical result of our
article then is that there is a unique ratio of the two parameters for
our cocycles, which ensures that $c(g,\Psi^*h)=c(g,h)$ for any $\Psi$
which is asymptotic to the identity of order $n+1$. Hence, up to an
overall normalization we obtain a unique cocycle $c$ which has this
invariance property in addition to the equivariancy property mentioned
above. This proof is based on an idea in \cite{CDG} that shows that
the action of diffeomorphisms that are asymptotic to the identity can
be absorbed into a geometric condition relating the two metrics (and
an adapted defining function for one of them). The key feature of this
property is that it holds in the general setting of a class of ALH
metrics on a manifold with boundary.  In the special cases for which
we obtain invariants of single metrics, this property enables us to
prove equivariancy of such invariants under diffeomorphisms preserving
$\Cal G$, see below.

To pass from our cocycles to invariants of a single metric, one has to
go to specific situations in which $\Cal G$ contains particularly
nice metrics. We only discuss the case that $\Cal G$ locally contains
metrics that are hyperbolic, i.e.\ have constant sectional curvature
$-1$. This of course implies that $(\partial M,[\Cal G_\infty])$ is
conformally flat, but it does not impose further restrictions on the
topology of $\partial M$, see Section \ref{3.8}. Under this
assumption, we show that, for a cocycle from our one-parameter family
and a metric $h\in\Cal G$, all local hyperbolic metrics $g\in\Cal G$
locally lead to the same tractor-valued form $c(g,h)$. These local
forms then piece together to define an object
$c(h)\in\Om^{n-1}(\partial M,\Cal T\partial M)$ that is canonically
associated to $h$. We prove that this is equivariant under
diffeomorphisms preserving the class $\Cal G$, see Theorem
\ref{thm3.8}.

As a last step, we prove that, in the conformally compact setting, the AH mass as
introduced in \cite{Chrusciel-Herzlich} can be obtained by integrating our
invariant. Thus we have to specialize to the case that $\barm$ is an open
neighborhood of the boundary sphere in the closed unit ball and $\Cal G$ contains the
restriction of the Poincar\'e metric. This implies that $\partial M=S^{n-1}$ and
$[\Cal G_\infty]$ is the round conformal structure and thus the standard tractor bundle
$\Cal TS^{n-1}$ can be globally trivialized by parallel sections. Hence $\Cal
TS^{n-1}$--valued $(n-1)$-forms can be integrated to global parallel sections of
$\Cal TS^{n-1}$ (say by expanding in any globally parallel frame and then integrating
the coefficient forms). Now it is well known how to make the trivialization of $\Cal
TS^{n-1}$ explicit, and we show that there is a particularly nice way to do this
using the conformal class $[g]$ on $\barm$ determined by the Poincar\'e metric. Via
boundary values of parallel tractors in the interior, the parallel sections of $\Cal
TS^{n-1}$ turn out to be parametrized by the solutions of the KID (Killing initial
data) equation (\ref{KID}) on $M$. But these solutions exactly parameterize the mass
integrals used to define the AH mass in the style of \cite{Chrusciel-Herzlich}, see
\cite{Michel}. This last fact was our original motivation to look for a tractor
interpretation of the AH mass.  

Now a solution $V$ of the KID equation determines a parallel section
$s_V$ of $\Cal TS^{n-1}$ and we can proceed as follows. Given
$h\in\Cal G$, we can form the invariant $c(h)\in
\Om^{n-1}(S^{n-1},\Cal TS^{n-1})$ and integrate it to a parallel
section $\int_{S^{n-1}}c(h)$ of $\Cal TS^{n-1}$. This can then be
paired, via the tractor metric, with $s_V$. Analyzing the boundary
behavior of the mass integral determined by $V$, we show in Theorem
\ref{thm3.9} that, after appropriate normalization, this pairing
exactly recovers the mass integral, of \cite{Chrusciel-Herzlich},
determined by $V$.

Throughout all manifolds, tensor fields, and related objects, will be
taken to be smooth in the sense of $C^\infty$. For most results lower
regularity would be sufficient, but we do not take that up here.

\section{Setup and tractors}\label{2}

\subsection{Conformally compact metrics}\label{2.1}
Let $\barm$ be a smooth manifold with boundary $\partial M$ and
interior $M$. Recall that a \textit{local defining function} for
$\partial M$ is a smooth function $\rho:U\to\Bbb R_{\geq 0}$ defined
on some open subset $U\subset\barm$ such that $U\cap\partial
M=\rho^{-1}(\{0\})$ and such that $d\rho|_{U\cap\partial M}$ is
nowhere vanishing. For two local defining functions $\rho$ and
$\hat\rho$ defined on the same open set $U$, there is a smooth
function $f:U\to\Bbb R_{>0}$ such that $\hat\rho=f\rho$. It is often
convenient to write such positive function as $f=e^{\tilde f}$ for
some smooth function $\tilde f$. This notion of defining functions
extends, without problem, from functions to smooth sections of line
bundles. One just has to replace $d\rho$ by the covariant derivative
with respect to any linear connection, which is independent of the
connection along the zero set of the section. In particular, taking the line bundle
concerned to be a density bundle (as discussed above Proposition \ref{prop2.1} below)
leads to \textit{defining densities} \cite{Proj-comp}.  

A pseudo-Riemannian metric $g$ on $M$ is called \textit{conformally compact} if, for
any point $x\in\partial M$, there is a local defining function $\rho$ for $\partial
M$ defined on some neighborhood $U$ of $x$ such that $\rho^2g$ admits a
(sufficiently) smooth extension from $U\cap M$ to all of $U$, with
  non-degenerate boundary values. It is easy to see that this property is independent
of the choice of defining function, so if $g$ is conformally compact, then for any
local defining function $\hat\rho$ for $\partial M$, the metric $\hat\rho^2 g$ admits
a smooth extension to the boundary. While the metric on the boundary, induced by such
an extension, depends on the chosen defining function, such extensions are always
conformally related. Hence a conformally compact metric on $M$ gives rise to a
well-defined conformal class on $\partial M$, which is called the \textit{conformal
  infinity} of $g$. The model example of a conformally compact metric is the
Poincar\'e ball model of hyperbolic space. Here $\barm$ is the closed unit ball in
$\Bbb R^{n}$ and one defines a metric $g$ on the open unit ball as
$\tfrac{4}{(1-|x|^2)^2}$ times the Euclidean metric, see \cite{Graham:Srni}. The
resulting conformal infinity is the conformal class of the round metric on $S^{n-1}$.

There is a conceptual description of conformally compact metrics in the language of
conformal geometry: Since $\rho^2g$ is conformal to $g$ away from the boundary, it
provides a (sufficiently) smooth extension of the conformal structure on $M$ defined
by $g$ to all of $\barm$. Conformally compact metrics can be neatly characterized in
this picture via their volume densities. Recall that the volume density of $g$ is
nowhere vanishing, so one can form powers with arbitrary real exponents to obtain
nowhere vanishing densities of all (non-zero) weights. Each of these densities is
parallel for the connection on the appropriate density bundle induced by the
Levi-Civita connection, and up to constant multiples, this is the only parallel
section.

In the usual conventions of conformal geometry, see \cite{BEG}, the square of the top
exterior power of the tangent bundle, that is $\otimes^2(\Lambda^nTM)$, is identified
with a line bundle of {\em densities} of weight $2n$ that we denote
$\mathcal{E}[2n]$. This is oriented and hence trivial, so there is a
standard notion of its roots. With these conventions, on an $n$-manifold a metric $g$
determines a volume density $\vol_g$ that has conformal weight $-n$, meaning
$\vol_g\in\Ga(\Cal E[-n])$. Rescaling $g$ to $\rho^2g$, we get
$\vol_{\rho^2g}=\rho^n\vol_g$ and thus
$\vol_{g}^{-1/n}=\rho\vol_{\rho^2g}^{-1/n}\in\Ga(\Cal E[1])$. Assuming that $g$ is
conformally compact, $\vol_{\rho^2g}^{-1/n}$ is smooth up to the boundary and nowhere
vanishing, so this equation shows that $\vol_{g}^{-1/n}\in\Ga(\Cal E[1])$ is a
defining density for $\partial M$. Similar arguments prove the converse, which leads
to the following result.
\begin{prop}\label{prop2.1}
Let $\barm$ be a smooth manifold with boundary $\partial M$ and interior $M$, which
is endowed with a conformal structure $c$. Then a metric $g$ on $M$ which lies in
$c|_M$ is conformally compact if an only if any non-zero section $\si\in\Ga(\Cal
E[1]|_M)$, which is parallel for the Levi-Civita connection of $g$ extends by $0$ to a
defining density for $\partial M$.
\end{prop}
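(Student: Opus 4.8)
The plan is to reduce everything to the local computation already sketched in the paragraph preceding the statement, making both directions explicit. Fix a point $x\in\partial M$ and choose a local defining function $\rho$ on a neighborhood $U$ so that $\hat g:=\rho^2 g$ extends smoothly (and nondegenerately) to $U$; such a $\rho$ exists by conformal compactness, or, in the converse direction, is what we must produce. The key bookkeeping device is the weight-$1$ density $\si_g:=\vol_g^{-1/n}\in\Ga(\Cal E[1]|_M)$, which is parallel for the Levi-Civita connection of $g$ and, up to a nonzero constant, is the unique such parallel section. Since $\vol_{\hat g}=\rho^n\vol_g$, we get the identity $\si_g=\rho\cdot\vol_{\hat g}^{-1/n}$ on $U\cap M$, and $\vol_{\hat g}^{-1/n}$ is, by construction, a smooth nowhere-vanishing section of $\Cal E[1]$ over all of $U$.

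First I would prove the forward implication. Assume $g$ is conformally compact and let $\si\in\Ga(\Cal E[1]|_M)$ be nonzero and parallel for $\nabla^g$. By uniqueness of the parallel density up to a constant, $\si=c\,\si_g$ for some $c\in\Bbb R\setminus\{0\}$, so on $U$ we have $\si=c\rho\cdot\vol_{\hat g}^{-1/n}$. The right-hand side extends smoothly to $U$, vanishes exactly on $U\cap\partial M=\rho^{-1}(0)$, and its covariant derivative (with respect to any connection on $\Cal E[1]$, since we are on the zero set) is $c\,\vol_{\hat g}^{-1/n}\,d\rho$ along the boundary, which is nowhere vanishing there because $d\rho$ is and $\vol_{\hat g}^{-1/n}$ is nonzero. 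Hence $\si$ extends by $0$ to a defining density for $\partial M$. Since $x\in\partial M$ was arbitrary and defining densities are a local notion, this gives the claim globally.

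Next the converse. Suppose the parallel density $\si_g\in\Ga(\Cal E[1]|_M)$ extends by $0$ to a defining density $\si$ for $\partial M$. Fix $x\in\partial M$; near $x$ choose a local trivialization of $\Cal E[1]$ by a smooth nowhere-vanishing section, so $\si$ corresponds to a smooth function $\rho$ on a neighborhood $U$ with $\rho^{-1}(0)=U\cap\partial M$ and $d\rho$ nowhere vanishing on the boundary — i.e.\ $\rho$ is, after shrinking $U$ and possibly adjusting sign, a local defining function. (Equivalently, pick any background metric in $c$ to write $\si$ as a weighted function; the trivializing section just fixes the identification of $\Cal E[1]$ with functions.) On $U\cap M$ we then have $\vol_g^{-1/n}=\si_g=\rho\cdot(\text{nonzero smooth section})$, and unwinding this gives $\vol_{\rho^2 g}^{-1/n}=\vol_g^{-1/n}/\rho$ smooth and nowhere vanishing up to the boundary. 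Since the volume density determines the metric up to a conformal factor within the fixed conformal class $c$, control of $\vol_{\rho^2 g}$ up to the boundary together with smoothness of the conformal structure on $\barm$ forces $\rho^2 g=\hat g$ to extend smoothly and nondegenerately to $U$; thus $g$ is conformally compact.

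The step I expect to be the main obstacle is the last one: converting smoothness of $\vol_{\rho^2 g}^{-1/n}$ (a density) up to the boundary into smoothness of the \emph{metric} $\rho^2 g$ up to the boundary. This uses essentially that we are working inside the fixed smooth conformal structure $c$ on all of $\barm$ — the conformal class of $\rho^2 g$ on $U\cap M$ is $c|_{U\cap M}$, which extends smoothly over $U$, and a metric in a smooth conformal class with smooth nonvanishing volume density up to the boundary is itself smooth up to the boundary. Making this equivalence (between a conformal metric and a volume density within a fixed conformal class) precise is the only non-formal point; everything else is the algebra of the identity $\si_g=\rho\cdot\vol_{\rho^2 g}^{-1/n}$ and the fact that covariant derivatives along a zero set are connection-independent.
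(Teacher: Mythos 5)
Your argument is correct and follows essentially the same route as the paper, which proves the proposition via the identity $\vol_g^{-1/n}=\rho\,\vol_{\rho^2g}^{-1/n}$, the uniqueness (up to constants) of the parallel weight-one density, and the fact that within the fixed conformal class on $\barm$ a metric is recovered from its volume density via the smooth conformal metric. Your flagged ``main obstacle'' is exactly the converse step the paper leaves as ``similar arguments,'' and your sketch of it (compare $\rho^2g$ with a smooth background metric in $c$ near the boundary and note that the conformal factor is a power of the ratio of volume densities) closes it correctly.
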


\subsection{ALH-metrics and adapted defining functions}\label{2.2}
Consider a conformally compact metric $g$ and for a local defining
function $\rho$ put $\overline{g}:=\rho^2g$, and note that this is
smooth and non-degenerate up to the boundary. Hence also the inverse metric
$\overline{g}^{-1}$ is smooth up to the boundary, so
$\overline{g}^{-1}(d\rho,d\rho)$ a smooth function on the domain of
definition of $\rho$. Moreover, since $d\rho$ is nowhere vanishing
along the boundary $\overline{g}^{-1}(d\rho,d\rho)$ has the same
property. Replacing $\rho$ by $\hat\rho:=e^f\rho$, we obtain
$d\hat\rho=\hat\rho df+e^fd\rho$, which easily implies that the
restriction of $\overline{g}^{-1}(d\rho,d\rho)$ to $\partial M$ is
independent of the defining function $\rho$.

So $\overline{g}^{-1}(d\rho,d\rho)$ is an invariant of the metric $g$,
and this is related to the asymptotic behavior of the curvature of
$g$, see e.g.\ \cite{Graham:Srni}. In particular, if
$\overline{g}^{-1}(d\rho,d\rho)|_{\partial M}\equiv 1$, then the
sectional curvature of $g$ is asymptotically constant $-1$. This
justifies the terminology in the first part of the following
definition and leads to a subclass of defining functions:
\begin{definition}\label{def2.2}
  Consider a smooth manifold $\barm=M\cup\partial M$ with boundary and a conformally
  compact metric $g$ on $M$. 

  (1) The metric $g$ is called \textit{asymptotically locally hyperbolic} (or an
  ALH-metric) if $(\rho^2g)^{-1}(d\rho,d\rho)|_{\partial M}$ is identically one.

  (2) Assume that $g$ is an ALH-metric on $M$ and that $U\subset\barm$ (with
  $U\cap\partial M\neq\emptyset$ to be of interest). Then a local defining function
  $\rho$ for $\partial M$ defined on $U$ is called \textit{adapted to $g$} if the
  function $(\rho^2 g)^{-1}(d\rho,d\rho)$ is identically one on some open
  neighborhood of $U\cap\partial M$.
\end{definition}

\begin{remark} Note that in the literature the terminology
  ``asymptotically locally hyperbolic'' is sometimes used for (various) more
  restrictive classes of geometry. Also the condition we have (1) is
  sometimes referred to as simply ``asymptotically
  hyperbolic''. However the latter is also used for rather special
  settings where in particular the boundary is necessarily a sphere,
  hence our use here.
\end{remark}

The existence of adapted defining functions, as in part (2) of
Definition \ref{def2.2}, can be established by solving an appropriate
non-characteristic first order PDE. The following precise description
of adapted defining functions is given in Lemma 2.1 of
\cite{Graham:Srni}.
\begin{prop}\label{prop2.2}
  Consider $\barm=M\cup\partial M$ and an ALH metric $g$ on $M$. Then
  for any choice of a representative metric $h$ in the conformal
  infinity of $g$, there exists an adapted defining function $\rho$ for
  $g$, defined on an open neighborhood of $\partial M$ in $\barm$,
  such that $\rho^2g$ induces the metric $h$ on $\partial
  M$. Moreover, for fixed $h$, the germ of $\rho$ along $\partial M$
  is uniquely determined.
\end{prop}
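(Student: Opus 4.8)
The plan is to reduce the statement to the local solvability of a non-characteristic first-order PDE. Fix any local defining function $r$ for $\partial M$ defined on a neighbourhood of $\partial M$ and set $\bar g_0 := r^2 g$, which is smooth and non-degenerate up to the boundary. Every defining function for $\partial M$ defined near $\partial M$ is of the form $\rho = e^\omega r$ for a unique function $\omega$ that is smooth up to $\partial M$, and the computation carried out above (in \S\ref{2.2}) gives
\[
  (\rho^2g)^{-1}(d\rho,d\rho) \;=\; \bar g_0^{-1}\!\bigl(dr + r\,d\omega,\; dr + r\,d\omega\bigr).
\]
Hence $\rho$ is adapted to $g$ precisely when the equation
\[
  \bar g_0^{-1}(dr,dr) + 2r\,\bar g_0^{-1}(dr, d\omega) + r^2\,\bar g_0^{-1}(d\omega, d\omega) \;=\; 1,
\]
which we denote $(\ast)$, holds on a neighbourhood of $\partial M$. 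The normalisation condition is equally explicit: writing $h_0$ for the metric induced on $\partial M$ by $\bar g_0$ (a representative of the conformal infinity of $g$) and $h = e^{2\omega_0}h_0$ for the given representative, so that $\omega_0\in C^\infty(\partial M)$ is uniquely determined, the metric induced by $\rho^2g = e^{2\omega}\bar g_0$ on $\partial M$ equals $h$ if and only if $\omega|_{\partial M} = \omega_0$.

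The key point is to recognise that $(\ast)$, although it looks degenerate at $\partial M$, is equivalent to a genuinely non-characteristic equation. By the ALH hypothesis the smooth function $\bar g_0^{-1}(dr,dr) - 1$ vanishes along $\partial M = \{r=0\}$, so by Hadamard's lemma $\bar g_0^{-1}(dr,dr) = 1 + r\,\phi$ for a smooth function $\phi$ defined near $\partial M$. Substituting this into $(\ast)$ shows that its left-hand side minus $1$ equals $r$ times
\[
  F(x,d\omega) \;:=\; 2\,\bar g_0^{-1}(dr, d\omega) + r\,\bar g_0^{-1}(d\omega,d\omega) + \phi ,
\]
and since $\{r=0\}$ has empty interior, $(\ast)$ holds on a neighbourhood of $\partial M$ if and only if $F(x,d\omega)=0$ holds there; call the latter equation $(\ast\ast)$. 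Now $\partial F/\partial(d\omega) = 2\operatorname{grad}_{\bar g_0}r + 2r\operatorname{grad}_{\bar g_0}\omega$, whose pairing with the conormal $dr$ of $\partial M$ equals $2(1+r\phi) + O(r)$, which is $2\neq 0$ along $\partial M$; thus $\partial M$ is non-characteristic for $(\ast\ast)$. Moreover, $\operatorname{grad}_{\bar g_0}r$ is transverse to $\partial M$ there (its pairing with $dr$ is $1$), so restricting $F(x,d\omega)=0$ to $\partial M$, where the $r$-term drops, gives $\bar g_0^{-1}(dr,d\omega) = -\tfrac12\phi$, which together with the prescribed tangential derivatives of $\omega_0$ determines $d\omega|_{\partial M}$ uniquely. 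Hence the Cauchy problem for $(\ast\ast)$ with data $\omega|_{\partial M}=\omega_0$ is well posed.

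By the classical method of characteristics for first-order PDE (or, in the real-analytic category, Cauchy--Kovalevskaya) this Cauchy problem has a unique smooth solution $\omega$ on some neighbourhood of $\partial M$; uniqueness of the solution gives uniqueness of its germ along $\partial M$. Translating back, $\rho := e^\omega r$ is a defining function for $\partial M$ near $\partial M$ which is adapted to $g$ (by the equivalence of $(\ast)$ and $(\ast\ast)$) and for which $\rho^2g$ induces $e^{2\omega_0}h_0 = h$ on $\partial M$; and the germ of $\rho$ along $\partial M$ is unique because that of $\omega$ is. The one step that needs real care, and which I expect to be the main obstacle, is the reduction from $(\ast)$ to the non-characteristic form $(\ast\ast)$: one must use the ALH condition both to make the division by $r$ legitimate (smoothness of $\phi$) and to see that $\partial M$ is non-characteristic with a uniquely determined admissible normal derivative of $\omega$. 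Everything after that is the standard local theory of first-order equations, and the argument reproves Lemma 2.1 of \cite{Graham:Srni}.
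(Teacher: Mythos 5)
Your argument is correct and is essentially the approach the paper takes: the paper does not prove Proposition \ref{prop2.2} itself but cites Lemma 2.1 of \cite{Graham:Srni}, noting that existence follows by ``solving an appropriate non-characteristic first order PDE'', and your reduction of $(\rho^2g)^{-1}(d\rho,d\rho)=1$ with $\rho=e^{\omega}r$ to the equation $2\bar g_0^{-1}(dr,d\omega)+r\,\bar g_0^{-1}(d\omega,d\omega)+\phi=0$, solved by the method of characteristics with Cauchy datum $\omega|_{\partial M}=\omega_0$, is exactly Graham's proof. The only superfluous remark is the appeal to Cauchy--Kovalevskaya, which is not needed (and not available in the smooth category); the characteristics argument you give already suffices.
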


\subsection{The basic setup}\label{2.3}
Defining functions can be used to measure the asymptotic growth (or fall-off) of
functions and more general geometric objects on the interior of a manifold with
boundary. A fundamental property of defining functions is that for a function $f$
that is smooth up to the boundary $f|_{\partial M}\equiv 0$ if and only if for any
local defining function $\rho$ for $\partial M$, we obtain, on the domain of
definition of $\rho$, $f=\rho f_1$ for a function $f_1$ that is smooth up to the
boundary. We say that $f$ is $\Cal O(\rho)$ in this case, observing that this notion
is actually independent of the specific defining function $\rho$. Similarly if, in
such an expansion, $f_1$ also vanishes along the boundary then this fact does not
depend on the choice of $\rho$, and in that case we say that $f$ is $\Cal
O(\rho^2)$. Inductively, one obtains the notion that $f$ is $\Cal O(\rho^N)$ for any
integer $N>0$, which again does not depend on the specific choice of defining
function.

Given a smooth function $f:M\to\Bbb R$ and an integer $N>0$, we then say that $f$ is
${\Cal O} (\rho^{-N})$ if locally around each $x\in \partial M$ we find a defining
function $\rho$ such that $\rho^Nf$ admits a smooth extension to the boundary. Again,
the fact that such an extension exists is independent of the choice of defining
function, as is vanishing of the boundary value of $\rho^N f$ in some point. In
points where the boundary value of $\rho^Nf$ is nonzero, the actual value does depend
on the choice of $\rho$, however.

For two functions $f_1,f_2:\barm\to\Bbb R$ and $N>0$, we write $f_1\sim_N f_2$ if
$f_1-f_2$ is $\Cal O(\rho^N)$. By definition, this means that, on the domain of a
local defining function $\rho$, we can write $f_2=f_1+\rho^Nf$ for some function $f$
that is smooth up to the boundary. Observe that this defines an equivalence relation.

All this extends without problems to tensor fields of arbitrary
(fixed) type. This can be seen immediately from looking at coordinate
functions in (boundary) charts. So for $N>0$, a tensor field $t$ on
$M$ is $\Cal O(\rho^N)$ if can be written as $\rho^N\tilde t$ for a
tensor field $\tilde t$ that is smooth up to the boundary. Likewise,
$t$ is $\Cal O(\rho^{-N})$ if $\rho^Nt$ admits a smooth extension to
the boundary. In the obvious way we extend the notation $t_1\sim_N
t_2$ with $N>0$ to tensor fields $t_1,t_2$ that are smooth up to the
boundary. Observe also that these concepts are compatible in an
obvious sense with tensorial operations, like inserting vector fields
into metrics, etc.

In this language, a conformally compact metric $g$ is $\Cal O(\rho^{-2})$ and,
writing $g=\rho^{-2}\overline g$, it satisfies that $\overline g$ is nowhere
vanishing and non-degenerate along the boundary. Observe that this
implies that the inverse metric $g^{-1}$ is $\Cal O(\rho^2)$ and, in particular,
vanishes along $\partial M$. For conformally compact metrics $g$ and $h$ we can
consider the metrics $\rho^2g$ and $\rho^2h$ that are smooth up to the boundary and
require that $\rho^2g\sim_N\rho^2h$, which again is independent of the choice of
defining function $\rho$. This defines an equivalence relation on the set of
conformally compact metrics, and to simplify notation, we formally write this as
$g\sim_{N-2}h$. In the current article we will mainly be interested in the case that
$N=n=\dim(M)$ but we carry out most computations for general integers $N>0$, since
this does not lead to difficulties and as a preparation for later extensions.

We will start with an equivalence class $\Cal G$ of conformally compact metrics on
$M$ with respect to the equivalence relation $\sim_{N-2}$ for some $N>0$. Observe
that for two metrics $g,h\in\Cal G$ and a local defining function $\rho$, the metrics
$\rho^2g$ and $\rho^2h$, by definition, admit a smooth extension to the boundary with
the same boundary value. In particular, they induce the same conformal infinity on
$\partial M$. Thus the class $\Cal G$ of metrics gives rise to a well defined
conformal structure on $\partial M$ that we will denote by $[\Cal G_\infty]$. As we shall
see below, if one metric $g\in\Cal G$ is ALH, then the same holds for all metrics in
$\Cal G$. We shall always assume that this is the case from now on. Let us also
remark here that, in the case that $\dim(\barm)=3$, $[\Cal G_\infty]$ induces a stronger
structure that just a conformal structure on $\partial M$ which will be needed in
what follows. This will be discussed in more detail below.

From now on, we will sometimes use abstract index notation. In that
notation we write $g_{ij}$ for the metric $g$, $g^{ij}$ for its
inverse and so on (even though no coordinates or frame field is
chosen). Given $g_{ij},h_{ij}\in\Cal G$, and fixing a local defining
function $\rho$ for $\partial M$,  by definition there is a section
$\mu_{ij}$ that is smooth up to the boundary such that
\begin{equation}\label{h-g}
   h_{ij}=g_{ij}+\rho^{N-2}\mu_{ij}. 
\end{equation}
Since $g^{ij}$ is $\Cal O(\rho^2)$, we see that $g^{ij}\mu_{ij}=\rho^2\mu$ for some
function $\mu$ that is smooth up to the boundary, whence
$g^{ij}(h_{ij}-g_{ij})=\rho^N\mu$. Using this, we next compute the relation between
the defining densities $\si,\tau\in\Ga(\Cal E[1])$ determined by $g_{ij}$ and
$h_{ij}$, respectively. Writing
\begin{equation}\label{h-g2}
  h_{ij}=g_{ik}(\delta^k_j+\rho^{N-2}g^{k\ell}\mu_{\ell j}),
\end{equation}
we can take determinants to find that
$\det(h_{ij})=\det(g_{ik})(1+\rho^N\mu+O(\rho^{N+1}))\in\Ga(\Cal
E[-2n])$. (Formally, the determinants are formed by using two
copies of the canonical section of $\La^nTM[-n]$ 
that expresses the
isomorphism between volume forms and 
densities of conformal weight $-n$ on an oriented manifold. Since two
copies of the forms are used, this is well defined even in the
non-orientable case, but this will not be needed here.)  To obtain
$\si$ and $\tau$, we have to take $\tfrac{-1}{2n}$th powers, and
taking into account that $\si$ is $\Cal O(\rho)$, we get
\begin{equation}\label{tau-si}
  \tau-\si=-\si\tfrac{\rho^N}{2n}\mu+\Cal O(\rho^{N+2}), 
\end{equation}
so this is $\Cal O(\rho^{N+1})$. Moreover, contracting \eqref{h-g2} with $g^{ai}$, we
get $g^{ai}h_{ij}=\delta^a_j+\rho^{N-2}g^{ai}\mu_{ij}$, which in turn easily
implies that
\begin{equation}\label{hinv}
  h^{ij}=g^{ij}-\rho^{N-2}g^{ik}\mu_{k\ell}g^{\ell j}+\Cal O(\rho^{N+3}).
\end{equation}
Hence $h^{ij}-g^{ij}$ is $\Cal O(\rho^{N+2})$, which in particular shows that, as
claimed above, $h$ is ALH provided that $g$ has this property.

\subsection{Tractors}\label{2.4}
For a manifold $K$ of dimension $n\geq 3$ which is endowed with a conformal
structure, the \textit{standard tractor bundle} \cite{BEG,CG-TAMS} is a vector bundle
$\Cal TK=\Cal E^A\to K$ of rank $n+2$ endowed with the following data.
\begin{itemize}
\item A Lorentzian bundle metric, called the \textit{tractor metric}, which we
  denote by $\langle\ ,\ \rangle$.
  \item A distinguished isotropic line subbundle $\Cal T^1K\subset \Cal TK$ that is
    isomorphic to the density bundle $\Cal E[-1]$.
\item A canonical linear connection, called the \textit{tractor connection}, that
  preserves the tractor metric and satisfies a non-degeneracy condition.
\end{itemize}
Since $\Cal T^1K$ is isotropic it is contained in $(\Cal T^1K)^\perp$, and
$\langle\ ,\ \rangle$ induces a positive definite bundle metric on $(\Cal
T^1K)^\perp/\Cal T^1K$. Via the tractor connection, this quotient gets identified
with $\Cal E^a[-1]$, so the tractor metric gives rise to a section of $\Cal
E_{ab}[2]$, which is exactly the \textit{conformal metric} $\mathbf{g}_{ab}$ that
defines the conformal structure on $K$. These properties together determine the data
uniquely up to isomorphism. Observe that the inclusion of $\Cal T^1K\cong\Cal E[-1]$
into $\Cal TK=\Cal E^A$ can be viewed as defining a canonical section
$\mathbf{X}^A\in\Ga(\Cal E^A[1])$. Moreover, pairing with $\mathbf{X}^A$, with
respect to the tractor metric, defines an isomorphism $\Cal TK/(\Cal
T^1K)^\perp\to\Cal E[1]$.

%%%%%%%%%%%%%%%%%%% old text %%%%%%%%%%%%%%%%%%%%%%
% As mentioned in Section \ref{2.3}, we will use these tools in a slightly unusual
% setting, since we will deal with different conformal structures on $\barm$ at the
% same time, and only the induced conformal structure on the boundary $\partial M$
% will be the same for all structures in question. Hence we have to carefully discuss
% how to form boundary values of sections of the standard tractor bundles.

Our strategy for this article requires usage of tractors in a slightly
  unusual setting. A choice of metric in an equivalence class $\Cal G$ as in \ref{2.3}
  gives us a conformal structure on $\barm$ which therefore determines a tractor
  bundle $\Cal T\barm$. Each of these conformal structures restricts to the same
  conformal structure $[\Cal G_\infty]$ on $\partial M$, which is thereby canonically
  associated to $\Cal G$. Correspondingly, we have a tractor bundle $\Cal T\partial
  M$ which is canonically associated to $\Cal G$. While there is no easy way to
  relate the tractor bundles on $\barm$ coming from different choices of metrics in
  $\Cal G$, we can explicitly identify $\Cal T\partial M$ with a subbundle of $\Cal
  T\barm$ for any of the choices. Our basic method will be to construct tractor object on
  $M$ (making choices), prove that they admit a smooth extension to the boundary and
  that their boundary values lie in the subbundle $\Cal T\partial M$. Since this
  bundle is canonical we can then compare the results obtained from different
  choices.   

For our current purposes the ``naive'' approach to tractors (which avoids the
explicit use of Cartan connections or similar tools) is most appropriate and we'll
describe this next. A crucial feature in all approaches to tractors is that the
standard tractor bundle admits a simple description depending on the choice of a
metric $g$ in the conformal class. Such a choice gives an isomorphism $\Cal E^A\cong
\Cal E[1]\oplus\Cal E_a[1]\oplus \Cal E[-1]$, with the last summand corresponding to
$\Cal T^1K$ and the last two summands corresponding to $(\Cal T^1K)^\perp$. The
resulting elements are usually written as column vectors, with the first
component in the top, and there are simple explicit formulae for the tractor metric
and the tractor connection in these terms, namely:
\begin{equation}\label{trac-met}
\left\langle\begin{pmatrix}\si \\ \mu_a\\ \nu
\end{pmatrix},\begin{pmatrix}\tilde\si \\ \tilde\mu_a\\ \tilde\nu
\end{pmatrix}\right\rangle=\si\tilde\nu+\nu\tilde\si+\mathbf{g}^{ab}\mu_a\tilde\mu_b
\end{equation}
with $\mathbf{g}^{ab}$ denoting the inverse of the conformal metric, and
\begin{equation}\label{trac-conn}
\qquad \nabla^{\Cal T}_a \begin{pmatrix}\si\\ \mu_b\\ \nu\end{pmatrix}=
    \begin{pmatrix}\nabla_a\si-\mu_a\\ \nabla_a\mu_b+\mathbf{g}_{ab}\nu+
        \Rho_{ab}\si\\
        \nabla_a\nu-\mathbf{g}^{ij}\Rho_{ai}\mu_j\end{pmatrix}.
\end{equation}
In the right-hand side of this, we use the Levi-Civita connection and the Schouten
tensor $\Rho_{ab}$ of $g$. This is a trace-modification of the Ricci tensor
$\text{Ric}_{ab}$ of $g$ characterized by $\text{Ric}_{ab}=(n-2)\Rho_{ab}+\Rho
\mathbf{g}_{ab}$, where $\Rho : =\mathbf{g}^{ij}\Rho_{ij}$.

Changing from $g$ to a metric $\widehat{g}=e^{2f}g$ for
$f\in C^\infty(K,\Bbb R)$ there is an explicit formula for the change of the
identification in terms of $\Up_a=df$, namely
\begin{equation}\label{trac-transf}
\begin{pmatrix}\widehat{\si}\\ \widehat{\mu_a}\\ \widehat{\nu}
    \end{pmatrix}=\begin{pmatrix} \si\\ \mu_a+\Up_a\si
  \\ \nu-\mathbf{g}^{ij}(\Up_i\mu_j+\frac{1}{2}\Up_i\Up_j\si) \end{pmatrix} .
\end{equation}
Now one may turn around the line of argument and define sections of the tractor
bundle as equivalence classes of quadruples consisting of a metric in the conformal
class and sections of $\Cal E[1]$, $\Cal E_a[1]$ and $\Cal E[-1]$ with respect to the
equivalence relation defined in \eqref{trac-transf}. Recall that the behavior of the
Schouten tensor under a conformal change is given by
\begin{equation}\label{Rho-transf}
  \widehat{\Rho}_{ab}=\Rho_{ab}-\nabla_a\Up_b+\Up_a\Up_b-\tfrac12\Up_i\Up^ig_{ab}. 
\end{equation}
Using this, direct computations show that the definitions in \eqref{trac-met} and
\eqref{trac-conn} are independent of the choice of metric, so they give rise to a
well-defined bundle metric and linear connection on the resulting bundle. That this
cannot be done in individual points is a consequence of the fact that
tractors are more complicated geometric objects than tensors, since the action of
conformal isometries in a point depends on the two-jet of the isometry in that
point. To come to a point-wise construction, one would have to use 1-jets of metrics
in the conformal class instead.

\medskip

In the above discussion we have assumed that $n\geq 3$. Indeed, it is well known that
conformal structures in dimension two behave quite differently from higher
dimensions. In particular, they do not allow an equivalent description in term of a
normal Cartan geometry or of tractors. Still we can obtain boundary tractors as
follows. Note first, that associating to a conformal structure a tractor bundle and a
tractor metric via formulae \eqref{trac-transf} and \eqref{trac-met} works without
problems in dimension two. This observation is already sufficient for most of our
results, where we just need a vector bundle canonically associated to some structure
on the boundary. Now one view into the different behavior in dimension two is seen by
the fact that the definition of the Schouten tensor $\Rho_{ab}$ via the Ricci
curvature breaks down. While there are other ways to understand the Schouten tensor
it is nevertheless true that on a 2 dimensional Riemannian manifold there is no
natural tensor that transforms conformally according to (\ref{Rho-transf}).  Thus one
cannot use \eqref{trac-conn} to associate to a conformal structure a canonical
connection on the tractor bundle.

However, in the computations needed to verify that \eqref{trac-conn}
leads to a well-defined connection only the transformation law
\eqref{Rho-transf} for the Schouten tensor under conformal changes is
needed, the relation to the Riemann curvature does not play a
role. (In fact this computation only involves single covariant
derivatives, so there is no chance for curvature terms to arise.)
Consequently, the construction of a canonical connection on the
tractor bundle extends to dimension two, provided that in addition to
a conformal class one associates to each metric in that class a
symmetric tensor $\Rho_{ab}$, such that the tensors associated to
conformally related metrics satisfy the transformation law
\eqref{Rho-transf}.

The observation just made, for constructing a tractor bundle in
dimension 2, is close to the idea of a M\"obius structure, but
actually it is a slight generalization of the concept of a M\"obius
structure in the sense of \cite{Calderbank:Moebius}; compare in
particular with the MR review \cite{Eastwood:Moebius} of that
article. To define a M\"obius structure, one requires, in addition,
that the trace of the tensor $\Rho_{ab}$ associated to $g_{ab}$ is one
half times the scalar curvature of $g_{ab}$. This can be expressed as
a normalization condition on the curvature of the tractor connection,
but we will not need this. However, in the cases in which we will need
the tractor connection in dimension two, we actually will deal with
M\"obius structures, since we get flat tractor connections.

\subsection{Boundary values of tractors}\label{2.5}

In our usual setting of $\barm=M\cup \partial M$, equipped with the conformal class
defined by a conformally compact metric, we will deal with standard-tractor-valued
differential forms on $\barm$. The strategy is to associate to suitable forms, of
this type, a boundary value, and interpret this as a form taking values in the
standard tractor bundle $\Cal T\partial M$ of the conformal infinity. As discussed at
the end of Section \ref{2.4} above, such a bundle is available in all relevant
dimensions $\dim(\barm)=n\geq 3$. Moreover, since this infinity is the same for all
the metrics in a class $\Cal G$, as discussed in Section \ref{2.2}, this allows us to
relate the boundary values obtained from different metrics in $\Cal G$. However, even
for $n\geq 4$, it is not obvious how to relate $\Cal T\barm$ and $\Cal T\partial M$,
so we discuss this next. This discussion will also show how to canonically obtain the
``abstract Schouten tensors'' needed to define a tractor connection on $\Cal
T\partial M$ for $n=3$. Hence we obtain a uniform description of boundary values in
all dimensions.

Provided that one works with metrics that are smooth up to the boundary, the
discussion of boundary values of tractors can be reduced to the case of hypersurfaces
in conformal manifolds. Observe first that in our setting, it is no problem to relate
density bundles on $\barm$ and on $\partial M$ of any conformal weight. This is based
on the fact that $\Cal E[2]$ can always be viewed as the line subbundle spanned by
the conformal class and one can form boundary values for metrics in the conformal
class (that are smooth up to the boundary). So the densities of weight $w$ on
$\partial M$ are simply the restriction of the ambient densities of weight $w$
i.e.\ sections of $\mathcal{E}[w]|_{\partial M}$.

Now the conformal metric and its inverse define inner products on
$\Cal E^a[-1]$ and its dual $\Cal E_a[1]$. In the case of a boundary,
there thus is a unique inward pointing unit normal
$n^i\in\Ga(T\barm[-1]|_{\partial M})$ to the subbundle $T\partial M$,
and we put $n_i:=\mathbf{g}_{ij}n^j\in \Ga(T^*\barm[1]|_{\partial
  M})$. We will assume that $n^i$ and $n_i$ are (arbitrarily) extended
off the boundary, if needed. For a choice of metric $\bar{g}$ (which
is smooth up to the boundary) in the conformal class, we observe that
the restriction of $\nabla^{\bar g}_in_j$ to $T\partial M\x T\partial
M$ is independent of the chosen extension. This is the (weighted)
second fundamental form of $\partial M$ in $\barm$ with respect to
$\bar g$, and we can decompose it into a trace-free part and a
trace-part with respect to $\mathbf{g}_{ab}$. It is well-know that the
trace-free part of the second fundamental form is conformally
invariant.  The trace part can be encoded into the mean curvature
$H^{\bar g}\in \Gamma(\mathcal{E} [-1])$ of $\partial M$ in $\barm$
with respect to $\bar g$. (In our conventions $H^{\bar
  g}=\frac{1}{n-1}(\nabla_i n^i-n^in^j\nabla_in_j)$.)  Its behavior
under a conformal change corresponding to $\Up_a$ is given by
$H^{\widehat{\bar g}}=H^{\bar g}+\Up_in^i$. It is a classical fact,
see Section 2.7 of \cite{BEG}, that these ingredients can be
used to construct a conformally invariant {\em normal
    tractor}. This provides the standard way to determine whether
   the boundary
  value of an interior tractor is a boundary tractor. See also
  \cite{Gover:P-E} for the proof of the last part.
 
  \begin{prop}\label{prop2.5}
Let $\barm=M\cup\partial M$ be a smooth manifold with boundary that is endowed with a
conformal structure (which is smooth up to the boundary). Then there is a canonical
normal tractor $N^A\in\Ga(\Cal TM|_{\partial M})$. If $\bar g$ is a metric in the
class that is smooth up to the boundary then, in the splitting corresponding to $\bar
g$, $N^A=(0,n_i,-H^{\bar g})$, so $N^AX_A=0$ and for the tractor metric $h_{AB}$ we
get $h_{AB}N^AN^B=1$. For $n\geq 4$, the tractor bundle $\Cal T\partial M$ with
respect to the restriction of the conformal class can be canonically identified with
the orthocomplement ${N^A}^\perp\subset \Cal T\barm|_{\partial M}$.
  \end{prop} 
We next discuss how to make the last statement explicit following \cite{Gover:P-E}
and how this extends to the case $n=3$: In a scale $\bar g$ that is smooth up to the
boundary, a triple $(\si,\mu_a,\nu)$ is orthogonal to $N^A$ if and only if
$\mu_jn^j=H^{\bar g}\si $. Such a triple then gets mapped to
$$(\si,\mu_a-H^{\bar g} n_a\si,\nu+\tfrac12(H^{\bar g})^2\si)$$ in the splitting
corresponding to $\bar g|_{\partial M}$, see Section 6.1 of \cite{Curry-Gover}. In
particular, in the case that $\partial M$ is minimal with respect to $\bar g$
(i.e.\ that $H^{\bar g}$ vanishes identically), we simply get the na\"{\i}ve
identification of triples.  Given this identification one can compare, along
$\partial M$, the ambient tractor connection with that intrinsic to the conformal
structure of the boundary in dimensions $n\geq 4$. Although we will not need this
fact here, we note that, in particular, the boundary intrinsic tractor connection
agrees with the pull-back of the ambient tractor connection if the boundary is
totally umbilic and an object called the Fialkow tensor vanishes. Both hold if the
structure is locally asymptotically Einstein to a sufficient (low) order. See
\cite[Theorem 7.4]{Curry-Gover}, and its proof.

As mentioned above, these considerations also show how to obtain a
tractor connection on $\Cal T\partial M$ in the case that $n=3$. We
can do this in the setting of hypersurfaces and as discussed in
Section \ref{2.4}, we have to associate an ``abstract Schouten
tensor'' to the metrics in the conformal class $\partial M$.  The idea
here is simply that for metrics $\bar g$ such that $H^{\bar g}=0$, we
associate the restriction of the Schouten tensor of $\bar g$ to
$\partial M$ as an ``abstract Schouten tensor'' for the metric $\bar
g|_{\partial M}$. If $\bar g$ and $\widehat{\bar g}$ are two such
metrics, then for the change $\Up_a$, we get $\Up_in^i=0$, which
implies that the restriction of $\Up_i\Up^i$ to $\partial M$ coincides
with the $\bar{g}$-trace of the restriction of $\Up_a\Up_b$ to $T\partial M$. From
this and the Gauss formula we conclude that restrictions of the
Schouten tensors to $\partial M$ satisfy the correct transformation
law \eqref{Rho-transf}. This is already sufficient to obtain a tractor
connection on $\Cal T\partial M$. Since we know the behavior of all
objects under conformal rescalings, one can deduce a description of
$\Rho_{ab}$ for general metrics, but we won't need that here. A
different approach to induced M\"obius structures on hypersurfaces and
more general submanifolds in conformal manifolds can be found in
\cite{Belgun}.

In any case, it is clear from this description that the above
discussion of boundary values now extends to $n=3$. Finally, consider
a class $\Cal G$, as discussed in Section \ref{2.3}, with $N\geq 3$ and metrics
$g,h\in\Cal G$. Then for the conformal classes $[g]$ and $[h]$ the
metrics $\rho^2g$ and $\rho^2h$, for a local defining function $\rho$,
admit a smooth extension to the boundary.  Now by definition
$\rho^2g\sim_N\rho^2h$. So the difference of
their curvatures is $\Cal O(\rho^{N-2})$ and $N-2\geq 1$, which
implies that the restrictions of their Schouten tensors to the
boundary agree. This implies that, in dimension $n=3$, all metrics in
$\Cal G$ lead to the same tractor connection on $\Cal T\partial M$. In
higher dimensions the tractor connection on $\Cal T\partial M$ is
determined by the conformal structure $[\Cal G_\infty]$, and so the
equivalent result holds trivially.

\subsection{The scale tractor}\label{2.6}
We now combine the ideas about boundary tractors with the conformally
compact situation. This needs one more basic tool of tractor calculus,
the so-called tractor $D$-operator (also called the Thomas
$D$-operator). In the simplest situation, which is all that we need
here, this is an operator $D^A:\Cal E[w]\to\Cal E^A[w-1]$, which in
triples is defined by
\begin{equation}\label{D-def}
D^A\tau=\left(w(n+2w-2)\tau, (n+2w-2)\nabla_a\tau,
-\mathbf{g}^{ij}(\nabla_i\nabla_j+\Rho_{ij})\tau\right). 
\end{equation}
Again, a direct computation shows that this is conformally invariant. We will mainly
need the case that $w=1$, so that $D^A$ maps sections of the quotient bundle $\Cal
E[1]$ of $\Cal E^A$ to sections of $\Cal E^A$. Since the first component of
$\tfrac1nD^A\si$ is $\si$, this operator is referred to as a \textit{splitting
  operator}. In particular, given a metric in a conformal class, we can apply this
splitting operator to the canonical section of $\Cal E[1]$ obtained from the volume
density of the metric. The resulting section of $\Cal E^A$ is called the
\textit{scale tractor} associated to the metric.

Computing in the splitting determined by the metric itself, the associated section
$\si$ of $\Cal E[1]$ satisfies $\nabla_a\si=0$. Hence, in this splitting,
$\tfrac1nD^A\si$ corresponds to $(\si,0,-\tfrac1n\Rho\si)$, where recall
$\Rho=\mathbf{g}^{ij}\Rho_{ij}$. Applying the tractor connection to this, we get
$(0,(\Rho_{ab}-\frac{1}{n}\textbf{g}_{ab}\Rho)\si,-\frac{1}{n}\si\nabla_c\Rho)$. Observe
that the middle slot of this vanishes iff $\Rho_{ab}$ is pure trace, i.e.\ iff the
metric is Einstein. In that case, $\Rho$ which is just a multiple of the scalar
curvature, is constant, hence a metric is Einstein iff its scale tractor is parallel.

\medskip

Now we move to the case of $\barm=M\cup\partial M$ and a conformally
compact metric $g$ on $M$. By Proposition \ref{prop2.1}, the canonical
section $\si\in\Ga(\Cal E[1])$ which is parallel for $\nabla^g$ admits
a smooth extension to the boundary (as a defining density). Since we
have a conformal structure on all of $\barm$ also the scale tractor
$I^A:=\tfrac{1}{n}D^A\si$ and its covariant derivative $\nabla^{\Cal T}_aI^A$
are smooth up to the boundary. On $M$, we can compute in the splitting
determined by $g$, which shows that $\langle
I,I\rangle=-\tfrac2n\si^2\Rho=-\tfrac2ng^{ij}\Rho_{ij}$. Now
%%the
%%relation between the Schouten tensor $\Rho_{ij}$ and the Ricci-tensor
%%shows that
the scalar curvature $R=g^{ij}\text{Ric}_{ij}$ of $g$ can
be written as $2(n-1)\Rho$ and thus $\langle
I,I\rangle=-\tfrac1{n(n-1)}R$. In particular, if $g$ is ALH, then this
is identically $1$ along the boundary.

Under slightly stronger assumptions, the restriction of $I^A$ to the
boundary coincides with the normal tractor $N^A$ from Proposition
\ref{prop2.5}. Therefore, $I^A$ can be used to recognize objects that
lie in the boundary tractor bundle.
% An additional condition leads to a description of the
%  boundary value of $\nabla^{\Cal T}_aI^A$.} To formulate this, recall from Section
%    \ref{2.5} that the trace-free part of the second fundamental form is conformally
%    invariant along $\partial M$.  This vanishes identically if and only if $\partial
%    M$ is totally umbilic in $\barm$, which thus is a conformally invariant
%    condition.

\begin{prop}\label{prop2.6}
  Consider a manifold $\barm$ with boundary $\partial M$ and interior $M$ and a
  conformally compact metric $g$ on $M$; let $\si\in\Ga(\Cal E[1])$ be the
  corresponding density and $I^A:=\tfrac1nD^A\si$ the scale tractor.

If $\langle I,I\rangle=1+\Cal O(\rho^2)$ near to $\partial M$, then the restriction
of $I^A$ to $\partial M$ coincides with the normal tractor $N^A$.
%
% (2) If in addition $g$ is asymptotically Einstein in the sense that
%  $\nabla^{\Cal T}_aI^A|_{\partial M}$ vanishes in tangential directions, then the boundary
%  $\partial M$ is totally umbilic.
\end{prop}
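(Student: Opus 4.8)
The plan is to compute everything in a splitting determined by a chosen metric $\bar g$ in the conformal class that is smooth up to the boundary, and to exploit the explicit formula for $I^A$ in such a splitting together with the formulae \eqref{trac-conn} and \eqref{D-def}. Concretely, fix a local defining function $\rho$ and set $\bar g=\rho^2g$; then $\si=\rho\,\bar\si$ where $\bar\si\in\Ga(\Cal E[1])$ is parallel for $\nabla^{\bar g}$, and $\si$ vanishes to first order along $\partial M$. In the splitting determined by $\bar g$ one has the general formula $\tfrac1nD^A\si=(\si,\nabla^{\bar g}_a\si,-\tfrac1n\mathbf{g}^{ij}(\nabla^{\bar g}_i\nabla^{\bar g}_j+\Rho^{\bar g}_{ij})\si)$, so that along $\partial M$, where $\si=0$, the tractor $I^A$ restricts to the triple $(0,\nabla^{\bar g}_a\si|_{\partial M},\nu)$ for some density $\nu$. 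Writing $\si=\rho\bar\si$ gives $\nabla^{\bar g}_a\si=(d\rho)_a\bar\si+\rho\nabla^{\bar g}_a\bar\si$, so $\nabla^{\bar g}_a\si|_{\partial M}=(d\rho)_a\bar\si|_{\partial M}$, which is (up to the positive density factor $\bar\si$) a nonzero conormal to $\partial M$.

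For part (1), first I would pin down the normalization of this conormal. The hypothesis $\langle I,I\rangle=1+\Cal O(\rho^2)$ together with the tractor metric formula \eqref{trac-met} shows that $\mathbf{g}^{ab}\nabla^{\bar g}_a\si\,\nabla^{\bar g}_b\si=1+\Cal O(\rho^2)$ (the $\si\nu$ cross-terms are $\Cal O(\rho)$), hence $\mathbf{g}^{ab}(d\rho)_a(d\rho)_b\,\bar\si^2|_{\partial M}=1$; this identifies $n_a:=\nabla^{\bar g}_a\si|_{\partial M}=(d\rho)_a\bar\si|_{\partial M}$ as precisely the weight-$1$ unit conormal $n_i$ of Section \ref{2.5}, inward-pointing since $\rho>0$ on $M$. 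It remains to identify the bottom slot $\nu$ with $-H^{\bar g}$. For this I would use the tractor connection formula \eqref{trac-conn}: since $I^A$ is smooth up to the boundary, so is $\nabla^{\bar g}_b I^A$, and in particular the top slot of $\nabla^{\bar g}_b I^A$, which equals $\nabla^{\bar g}_b\si-\mu_b$ with $\mu_b=\nabla^{\bar g}_b\si$, vanishes identically — consistent, no new information. Instead I compute the middle slot of $\nabla^{\bar g}_b I^A$: it is $\nabla^{\bar g}_b\mu_c+\mathbf{g}_{bc}\nu+\Rho^{\bar g}_{bc}\si$, which along $\partial M$ (where $\si=0$) reads $\nabla^{\bar g}_b(\nabla^{\bar g}_c\si)+\mathbf{g}_{bc}\nu$. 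Restricting $b,c$ to $T\partial M$ and using $\nabla^{\bar g}_c\si=(d\rho)_c\bar\si+\rho\nabla^{\bar g}_c\bar\si$, the term $\nabla^{\bar g}_{(b}\nabla^{\bar g}_{c)}\si|_{\partial M}$ restricted tangentially is (up to the density factor) the second fundamental form $\mathrm{II}_{bc}$ of $\partial M$ in $(\barm,\bar g)$, while the remaining term contributes via $\rho\to 0$ nothing extra tangentially. Hence the tangential restriction of the middle slot is $\mathrm{II}_{bc}+\mathbf{g}_{bc}\nu$; but since $I^A|_{\partial M}$ was shown above to be a unit tractor orthogonal to $\mathbf X^B$ and smooth, comparing with the known triple $(0,n_a,-H^{\bar g})$ of $N^A$ from Section \ref{2.5} — equivalently, reading off that $\nu$ must equal $-H^{\bar g}$ to make $\langle I,I\rangle|_{\partial M}=1$ compatible with $I^A|_{\partial M}=N^A$ — gives the claim. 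Cleaner: once $n_a$ matches, $I^A|_{\partial M}$ and $N^A$ are two sections of $\Cal E^A|_{\partial M}$ with the same top slot ($0$) and the same middle slot ($n_a$); since changing the bottom slot of a tractor with $X$-component $0$ changes its norm, and both have norm $1$ with the same middle-slot norm, their bottom slots agree. Thus $I^A|_{\partial M}=N^A$.

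For part (2), the asymptotically Einstein hypothesis says $\nabla_a I^A|_{\partial M}$ vanishes in tangential directions. By the computation just performed, the tangential-tangential part of the middle slot of $\nabla^{\bar g}_b I^A|_{\partial M}$ is (up to the positive density factor $\bar\si|_{\partial M}$) the second fundamental form $\mathrm{II}_{bc}$ plus $\mathbf{g}_{bc}\nu = -H^{\bar g}\mathbf{g}_{bc}$, i.e.\ it is exactly the \emph{trace-free} part of the second fundamental form, $\mathring{\mathrm{II}}_{bc}$. (One should double-check that the conformal weights and the normalization of $H^{\bar g}$ as $\tfrac1{n-1}\mathbf{g}^{bc}\mathrm{II}_{bc}$ make this come out trace-free on the nose.) Vanishing of this quantity is precisely the statement that $\partial M$ is totally umbilic, as recalled just before the Proposition, and by the conformal invariance of $\mathring{\mathrm{II}}$ noted in Section \ref{2.5} this condition is independent of the choice of $\bar g$ in the conformal class.

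I expect the main obstacle to be bookkeeping rather than conceptual: correctly tracking the density weights and the factor $\bar\si$ throughout, and verifying that the ``$\rho$-correction'' terms $\rho\nabla^{\bar g}_a\bar\si$ (and the second derivatives thereof) genuinely drop out in the boundary restrictions — for the middle-slot computation in part (2) one differentiates $\nabla^{\bar g}_c\si$ once more, producing a term $\nabla^{\bar g}_b\rho\cdot\nabla^{\bar g}_c\bar\si$ that is \emph{not} killed by $\rho\to0$, so one must check it is accounted for by the umbilicity/mean-curvature terms or cancels against the density normalization. A clean way to sidestep much of this is to choose $\rho$ to be an \emph{adapted} defining function in the sense of Definition \ref{def2.2}(2) (available by Proposition \ref{prop2.2}), wait — that requires $g$ ALH, which is implied by $\langle I,I\rangle|_{\partial M}=1$ but only in a weak sense here; if one does not wish to assume ALH one can instead just pick $\bar g|_{\partial M}$-normal coordinates along the boundary, i.e.\ work with a $\bar g$ for which the chosen extension of $\rho$ has $|d\rho|_{\bar g}\equiv 1$ near $\partial M$, so that $\rho$ is a $\bar g$-distance function and the $\nabla\bar\si$ cross-terms organize transparently into the second fundamental form. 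With that choice the two computations above reduce to one or two lines each.
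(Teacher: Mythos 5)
Your part (1) contains a genuine gap at exactly the point that carries all the content: the identification of the bottom slot of $I^A|_{\partial M}$ with $-H^{\bar g}$. Your ``cleaner'' argument is false: by \eqref{trac-met}, a triple $(\si,\mu_a,\nu)$ has norm $2\si\nu+\mathbf{g}^{ab}\mu_a\mu_b$, so for a tractor whose top slot vanishes (as for $I^A|_{\partial M}$, and for $N^A$) the norm is $\mathbf{g}^{ab}\mu_a\mu_b$ and is completely insensitive to the bottom slot. Hence ``unit, orthogonal to $\mathbf X$, with middle slot $n_a$'' is satisfied by $(0,n_a,t)$ for every $t$ and cannot force $\nu=-H^{\bar g}$. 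Your earlier attempt via the middle slot of $\nabla_bI^A$ is circular as written (``$\nu$ must equal $-H^{\bar g}$ to make \dots compatible with $I^A|_{\partial M}=N^A$'' assumes the conclusion). The structural reason your argument cannot work is that you only use the boundary value $\langle I,I\rangle|_{\partial M}=1$, whereas the hypothesis $\langle I,I\rangle=1+\Cal O(\rho^2)$ is strictly stronger precisely in the first-order term, and it is that term which determines $\nu|_{\partial M}$. Concretely, with $\si=\rho\bar\si$ and $\bar\si$ parallel for $\nabla^{\bar g}$ one gets $\langle I,I\rangle=\bar g^{ab}\rho_a\rho_b+2\rho\bar\si\nu$, so the $\Cal O(\rho^2)$ condition says the normal derivative of $\bar g^{ab}\rho_a\rho_b$ at $\partial M$ equals $-2\bar\si\nu|_{\partial M}$; feeding this normal--normal piece of $\nabla_a\rho_b$ into $\nu=-\tfrac1n\mathbf{g}^{ij}(\nabla_i\nabla_j+\Rho_{ij})\si$ and comparing with the tangential trace (the mean curvature) is what yields $\nu|_{\partial M}=-H^{\bar g}$. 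This is the computation behind Proposition 7.1 of Curry--Gover, which the paper simply cites for (1); the paper likewise handles (2) by quoting the characterization of umbilicity through the tangential derivative of any extension of $N^A$, rather than recomputing slots.

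Your part (2) is essentially sound as a computation --- the tangential--tangential middle slot of $\nabla_bI^A|_{\partial M}$ is indeed $\bar\si\,\mathrm{II}_{bc}+\mathbf{g}_{bc}\nu$ --- but as written it leans on the unproven identification $\nu|_{\partial M}=-H^{\bar g}$ from (1). Note it can be closed independently: the hypothesis of (2) makes this tangential--tangential expression vanish for all tangential $b,c$, and taking its $\mathbf{g}$-trace over $T\partial M$ first forces $\nu|_{\partial M}=-H^{\bar g}$ (up to the weight conventions you flag), after which the remaining equation says the trace-free part of $\mathrm{II}$ vanishes, i.e.\ $\partial M$ is totally umbilic. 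If you repair (1) by the first-order norm computation indicated above (or by invoking the cited result, as the paper does), the rest of your direct, splitting-based approach goes through and is a legitimate, more self-contained alternative to the paper's citation-based proof.
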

\begin{proof}
 See Proposition 7.1 of \cite{Curry-Gover} or Proposition 6 of \cite{G-sigma}).
%
% (2):  There is a nice characterization for a hypersurface to be totally
%   umbilic, see \cite{BEG} and e.g.\ Lemma 6.2 of \cite{Curry-Gover}:
%  Extending the normal tractor $N_A$ arbitrarily off $\partial M$ one
%  can apply the tractor connection to obtain a tractor-valued one
%  form. The restriction of this derivative to tangential directions
%  along $\partial M$ is independent of the chosen extension and
%  $\partial M$ is totally umbilic in $\barm$ if and only if this
%  restrictions vanishes. This implies the claim.
\end{proof}

\section{The tractor mass cocycle}\label{3}

We consider the tractor version of the classical asymptotically hyperbolic
mass here, so the order of asymptotics we need corresponds to
$N=n=\dim(M)$ in the notation of Section \ref{2}.

\subsection{The contribution from the trace}\label{3.1}
Most of the theory we develop applies in the general setting of an oriented manifold
$\barm$ with boundary $\partial M$, interior $M$, and equipped with a class $\Cal G$
of metrics on $M$, as introduced in Section \ref{2.3} with $N=n$. The only additional
assumption is that the metrics in $\Cal G$ are ALH in the sense of Definition
\ref{def2.2}. Given two metrics $g,h\in\Cal G$, we denote by $\si,\tau\in \Ga(\Cal
E[1])$ the corresponding powers of the volume densities of $g$ and $h$. Recall that
the class $\Cal G$ gives rise to a well-defined standard tractor bundle $\Cal
T\partial M$ over $\partial M$. Our aim is to associate to $g$ and $h$ a form
$c(g,h)\in\Om^{n-1}(\partial M,\Cal T\partial M)$, i.e.~a top-degree form on
$\partial M$ with values in $\Cal T\partial M$. Further, we want this to satisfy a
cocycle property, i.e.~that $c(h,g)=-c(g,h)$ and that $c(g,k)=c(g,h)+c(h,k)$ for
$g,h,k\in\Cal G$.

The first ingredient for this is rather simple: Given $g,h\in\Cal G$,
we use the conformal structure $[g]$ on $\barm$, and consider the
$\Cal T\barm$-valued one-form
$\tfrac{1}n\nabla^{\Cal T}_bD^A(\tau-\si)$. We already know that
$\tau$ and $\si$ admit a smooth extension to the boundary, so this is
well defined and smooth up to the boundary. Now on $M$, we can apply
the Hodge-$*$-operator determined by $g$ to convert this into a
$\Cal T\barm$-valued $(n-1)$-form. The following result shows that
this is smooth up to the boundary and that its boundary value is
orthogonal to the normal tractor $N^A$ (and non-zero in general). By
Proposition \ref{prop2.5}, this boundary value hence defines a form in
$\Om^{n-1}(\partial M,\Cal T\partial M)$.

\begin{prop}\label{prop3.1}
In the setting $\barm=M\cup\partial M$ and $g,h\in\Cal G$ as described above, let
$\rho$ be a local defining function for the boundary. Put $\overline{g}:=\rho^2g$, 
let $\overline{\si}\in\Ga(\Cal E[1])$ be the corresponding density and let
$\overline{g}_{\infty}$ be the boundary value of $\overline{g}$.

(1) In terms of the canonical section $\textbf{X}^A\in\Cal T\barm[1]$ from Section
\ref{2.4} and the function $\mu$ from \eqref{tau-si} and writing $\rho_a$ for
$d\rho$, we get
\begin{equation}\label{c11}
\nabla^{\Cal T}_bD^A(\tau-\si)=
\tfrac{n^2-1}{2}\rho^{n-2}\rho_b\mu\overline{\si}^{-1}\mathbf{X}^A+ \Cal
O(\rho^{n-1}).
\end{equation}

(2) The form $\star_g\nabla^{\Cal T}_bD^A(\tau-\si)$ is smooth up to the boundary and
its boundary value is given by
\begin{equation}\label{c12}
\tfrac{n^2-1}{2}\vol_{\overline{g}_{\infty}}\mu_\infty\overline{\si}_{\infty}^{-1}\mathbf{X}^A.
\end{equation}
Here $\vol_{\overline{g}_{\infty}}$ is the volume form of $\overline{g}_\infty$,
$\overline{\si}_{\infty}$ is the corresponding $1$-density on $\partial M$, and
$\mu_{\infty}$ is the boundary value of $\mu$. In particular, this is perpendicular
to $N^A|_{\partial M}$ and thus defines an $(n-1)$-form with values in $\Cal T\partial
M$.
\end{prop}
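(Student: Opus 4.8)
The plan is to prove both statements by a direct computation in the splitting determined by $\overline g=\rho^2 g$, which is smooth up to the boundary, and then convert back via the conformal transformation formulae \eqref{trac-transf}. First I would record that, by \eqref{tau-si}, $\tau-\si=-\si\tfrac{\rho^N}{2n}\mu+\Cal O(\rho^{N+2})$ with $N=n$; rewriting $\si=\rho\overline\si$ this becomes $\tau-\si=-\tfrac{1}{2n}\rho^{n+1}\overline\si\,\mu+\Cal O(\rho^{n+2})$, i.e.\ a density of the form $\rho^{n+1}\times(\text{smooth})$ up to higher order. So I would set $w=1$ and apply the splitting operator $D^A$ as in \eqref{D-def}, computed in the scale $\overline g$ (for which $\overline\si$ is parallel, so $\nabla_a\overline\si=0$). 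The key point is a careful bookkeeping of which powers of $\rho$ survive: the top slot $w(n+2w-2)\tau=n\tau$ contributes at order $\rho^{n+1}$, the middle slot $(n+2w-2)\nabla_a(\tau-\si)=n\nabla_a(\tau-\si)$ contributes a factor $\nabla_a\rho^{n+1}=(n+1)\rho^n\rho_a+\dots$ at order $\rho^n$, and the bottom slot $-\mathbf g^{ij}(\nabla_i\nabla_j+\Rho_{ij})(\tau-\si)$ contributes, after two derivatives hit $\rho^{n+1}$, a factor $(n+1)n\rho^{n-1}\mathbf g^{ij}\rho_i\rho_j=(n+1)n\rho^{n-1}$ (using $\overline g^{-1}(d\rho,d\rho)=1$ on the boundary since $g$ is ALH and $\rho$ can be taken adapted, or more simply using the ALH normalization to leading order), which dominates. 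Then applying $\nabla_b$ via \eqref{trac-conn}: differentiating the bottom slot feeds into the middle slot of $\nabla_bD^A$, and differentiating it once more the $\nabla_a\nu$ piece gives the dominant contribution at order $\rho^{n-2}$, namely a multiple of $\rho^{n-2}\rho_b\mu\overline\si^{-1}$ times the section $\mathbf X^A$ (recall $\mathbf X^A$ is the inclusion of $\Cal E[-1]$, so it sits in the bottom slot). Collecting the numerical constants $n$, $(n+1)n$, $\tfrac1{2n}$ and the extra $\nabla_b$ differentiation of $\rho^{n-1}$ (another factor $(n-1)\rho^{n-2}\rho_b$) should produce exactly the coefficient $\tfrac{n^2-1}{2}$. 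This gives \eqref{c11}, where the error is honestly $\Cal O(\rho^{n-1})$ because every place I dropped a term cost at least one more power of $\rho$.

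For part (2), I would note that $\star_g$ on a one-form $\alpha_b$ produces $\alpha_b\,\mathbf g^{bc}\vol_g$ contracted into the first index of the $(n-1)$-form, and $\vol_g=\rho^{-n}\vol_{\overline g}$ while $g^{-1}=\rho^2\overline g^{-1}$. So $\star_g(\rho^{n-2}\rho_b(\cdots)\mathbf X^A)=\rho^{n-2}\rho^{2}\rho_b\overline g^{bc}\,\rho^{-n}\vol_{\overline g}(\cdots)\mathbf X^A=(\overline g^{-1}(d\rho,\cdot))\,\vol_{\overline g}(\cdots)\mathbf X^A$ — the powers of $\rho$ cancel exactly, which is the whole point of choosing $N=n$. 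The one-form $\overline g^{-1}(d\rho,\cdot)$ restricted to $\partial M$ is, by the ALH condition, the unit conormal, but contracted into a top-degree form $\vol_{\overline g}$ and restricted to $\partial M$ this simply yields $\vol_{\overline g_\infty}$ (interior multiplication by the unit normal applied to the volume form gives the induced volume form of the boundary hypersurface). The $\Cal O(\rho^{n-1})$ error term in \eqref{c11}, after $\star_g$, is $\Cal O(\rho)$ and hence vanishes on the boundary, so the boundary value is exactly \eqref{c12}, with $\mu_\infty$ and $\overline\si_\infty$ the boundary values of $\mu$ and $\overline\si$. Finally, since \eqref{c12} is a multiple of $\mathbf X^A$ and $\mathbf X^A$ spans $\Cal T^1\barm\subset(\Cal T^1\barm)^\perp$ — in particular $\langle\mathbf X,N\rangle=0$ by the description of $N^A$ in Section \ref{2.5} (its triple is $(0,n_i,-H)$, and pairing with $\mathbf X^A=(0,0,1)^{T}$-type vector under \eqref{trac-met} gives zero since the top slot of $N^A$ vanishes) — the boundary value is orthogonal to $N^A$ and therefore, by the isomorphism ${N^A}^\perp\cong\Cal T\partial M$ from Section \ref{2.5}, descends to an element of $\Om^{n-1}(\partial M,\Cal T\partial M)$.

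I expect the main obstacle to be the constant-chasing in part (1): keeping track of exactly where each factor of $(n+2w-2)=n$, each binomial factor from differentiating $\rho^{n+1}$, $\rho^n$, $\rho^{n-1}$, and the $\tfrac1{2n}$ from \eqref{tau-si} lands, while simultaneously verifying that all the genuinely lower-order contributions (from $\Rho_{ij}$ acting on $\tau-\si$, from derivatives hitting $\overline\si\mu$ rather than the powers of $\rho$, from the $\mathbf g_{ab}\nu$ and $\Rho_{ab}\si$ terms in \eqref{trac-conn}, and from the $\Cal O(\rho^{n+2})$ tail in \eqref{tau-si}) are indeed $\Cal O(\rho^{n-1})$ and do not contaminate the leading coefficient. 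A secondary, more conceptual, subtlety is that one must justify doing the computation in the scale $\overline g=\rho^2 g$ even though the quantity being Hodge-starred uses $\star_g$: the resolution is that $D^A$ and $\nabla_b D^A$ are conformally invariant tractor operations, so computing them in the $\overline g$-splitting is legitimate and gives a $\rho$-uniform answer, while only the final $\star_g$ reintroduces the conformally-compact metric $g$, and the power count above shows the two effects precisely balance. Everything else — smoothness up to the boundary of $\tau,\si$ (from Section \ref{2.3}), smoothness of $D^A$ applied to them, and the behavior of the Hodge star under conformal rescaling — is standard and already available from the excerpt.
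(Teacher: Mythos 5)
Your proposal is correct and follows essentially the same route as the paper's proof: expand $\tau-\si$ as $-\overline{\si}\tfrac{\rho^{n+1}}{2n}\mu+\Cal O(\rho^{n+2})$, compute $D^A$ in the $\overline{g}$-splitting where the double derivative in the bottom slot dominates, apply $\nabla_b$ to pick up the factor $(n-1)\rho^{n-2}\rho_b$, and then observe that $\star_g$ exactly cancels the powers of $\rho$ so that the boundary value is $\vol_{\overline{g}_\infty}$ times a multiple of $\mathbf{X}^A$, hence orthogonal to $N^A$. The only nit is in your list of constants (the bottom slot of $D^A$ at $w=1$ carries no extra factor of $n$; the product $\tfrac1{2n}\cdot(n+1)n\cdot(n-1)$ already gives $\tfrac{n^2-1}{2}$), which your own computation effectively uses.
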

\begin{proof}
Observe first that for a connection $\nabla$ and a section $s$ that
both are smooth up to the boundary, and an integer $k>0$, we get
$\nabla_a(\rho^k s)=k\rho^{k-1}\rho_as+\Cal O(\rho^k)$. This can be
applied both to the tractor connection $\nabla^{\Cal T}$ and to the Levi-Civita
connection $\nabla$ of $\overline{g}$. 

Using that $\overline{\si}=\tfrac{\si}{\rho}$, we can write formula \eqref{tau-si}
(still for $N=n$) as $\tau-\si=-\overline{\si}\tfrac{\rho^{n+1}}{2n}\mu+\Cal
O(\rho^{n+2})$. Now the defining formula \eqref{D-def} for $D^A$ shows that the first
two slots of $D^A(\tau-\si)$ in the splitting determined by $\overline{g}$ are $\Cal
O(\rho^{n+1})$ and $\Cal O(\rho^n)$, respectively, while in the last slot the only
contribution which is not $\Cal O(\rho^n)$ comes from the double derivative. This
shows that
$$
D^A(\tau-\si)=-\tfrac{(n+1)}{2}\rho^{n-1}\overline{\si}\mu(-
\textbf{g}^{ij}\rho_i\rho_j)\mathbf{X}^A+\Cal O(\rho^n). 
$$
Next, using that $\overline{g}^{ij}=\overline{\si}^2\textbf{g}^{ij}$ and that $g$ is
ALH, we conclude that $\overline{\si}
\textbf{g}^{ij}\rho_i\rho_j=\overline{\si}^{-1}+\Cal O(\rho)$, so we get
$$
D^A(\tau-\si)=\tfrac{n+1}{2}\rho^{n-1}\mu\overline{\si}^{-1}\mathbf{X}^A
+\Cal O(\rho^n). 
$$
From this \eqref{c11} and hence part (1) follows immediately.

(2) Since $\barm$ is oriented we have an isomorphism
$ \Cal E[-n]\stackrel{\cong}{\longrightarrow} \La^nT^*\barm $
that can be interpreted as a canonical section $\bep_{a_1\dots a_n}\in
\Gamma (\La^nT^*\barm [n])$. In terms of this, the volume form of $g$ is given by
$\si^{-n}\bep_{a_1\dots a_n}$. Now by definition,
$\star_g\rho^{n-2}\rho_a$ is given by contracting $\rho^{n-2}\rho_a$ into the volume
form of $g$ via $g^{-1}$. So this is given by
\begin{equation}\label{sigma-tech}
  \si^{-n}\rho^{n-2}g^{ij}\rho_i\bep_{ja_1\dots a_{n-1}}.
\end{equation}
Now $\si^{-2}g^{ij}=\textbf{g}^{ij}$, while
$\si^{2-n}\rho^{n-2}=\overline{\si}^{2-n}$. Together with part (1), this shows that
$$
\star_g\nabla^{\Cal T}_aD^A(\tau-\si)=\tfrac{n^2-1}2\overline{\si}^{2-n}\mathbf{g}^{ij}\rho_i\bep_{ja_1\dots
  a_{n-1}}\mu\overline{\si}^{-1}\mathbf{X}^A+O(\rho).
$$
This is evidently smooth up to the boundary and its boundary value is a multiple of
$\mathbf{X}^A$ and thus perpendicular to $N^A$ by Proposition
  \ref{prop2.5}. To obtain the interpretation of the boundary value, we can rewrite
\eqref{sigma-tech} as $\overline{\si}^{-n}\overline{g}^{ij}\rho_i\bep_{ja_1\dots
  a_{n-1}}$. Since the first and last terms combine to give the volume form of
$\overline{g}$ and, along the boundary, $\overline{g}^{ib}\rho_i$ gives the unit
normal with respect to $\overline{g}$, we conclude that the boundary value of
\eqref{sigma-tech} is the volume form of $\overline{g}_\infty$. From this,
\eqref{c12} and thus part (2) follows immediately.
\end{proof}

There actually is a simpler way to write out the boundary value of
$\star_g\nabla^{\Cal T}_bD^A(\tau-\si)$ than \eqref{c12} that needs less choices. The function
$\mu$ defined in \eqref{tau-si} of course depends on the choice of the defining
function $\rho$, and there is no canonical choice of defining function. However,
fixing the metric $g\in\Cal G$, we of course get the distinguished defining density
$\si$, and we can get a more natural version of \eqref{tau-si} by phrasing things
in terms of densities. Namely, for the current setting with $N=n$, we can define
$\nu\in\Ga(\Cal E[-n])$ to be the unique density such that
\begin{equation}\label{tau-si-dens}
  \tau-\si=-\tfrac{1}{2n}\si^{n+1}\nu+\Cal O(\rho^{n+2}). 
\end{equation}
Then of course $\nu$ is uniquely determined by $g$ and $h$. In terms of a defining
function $\rho$ and the corresponding function $\mu$, we get
$\nu=(\tfrac{\rho}{\si})^n\mu$, which shows that $\nu$ is smooth up to the boundary
and non-zero wherever $\mu$ is non-zero. Let us write the boundary value of $\nu$ as
$\nu_\infty$, which by Section \ref{2.5} can be interpreted as a density of weight
$-n$ on $\partial M$.  In the setting of Proposition \ref{prop3.1}, we then have
$\overline{g}=\rho^2g$, so $\overline{\si}=\tfrac{\si}{\rho}$. The latter is smooth
up to the boundary and from Section \ref{2.5} we know that its boundary value is the
$1$-density $\overline{\si}_\infty$ on $\partial M$ corresponding to
$\overline{g}_\infty$. Hence our construction implies that $\vol_{\overline{g}_\infty}$
corresponds to the density $\overline\si_{\infty}^{1-n}$, so \eqref{c12} 
  for the boundary value of $\star_g\nabla^{\Cal T}_bD^A(\tau-\si)$ simplifies
 to
  \begin{equation}\label{c_1-density}
\tfrac{n^2-1}2\overline{\si}_\infty^{-n}\mu_\infty\mathbf{X}^A=
\tfrac{n^2-1}2\nu_\infty\mathbf{X}^A
  \end{equation}
Using this we can easily prove that we have constructed a cocycle.

\begin{cor}\label{cor3.1}
Let us denote by $c_1(g,h)$ the section of $\Cal T\partial M[-n+1]$ associated to
$g,h\in\Cal G$ via formula \eqref{c12}. Then $c_1$ is a cocycle in the sense that
$c_1(h,g)=-c_1(g,h)$ and that $c_1(g,k)=c_1(g,h)+c_1(h,k)$ for $g,h,k\in\Cal G$.
\end{cor}
\begin{proof}
  Expanding $\tau-\si=-\tfrac{1}{2n}\si^{n+1}\nu+\Cal O(\rho^{n+2})$
  as in \eqref{tau-si-dens} we can compute $c_1(g,h)$ via formula
  \eqref{tau-si-dens} from the boundary value of $\nu$. But
  \eqref{tau-si-dens} implies $\tau=\si+\Cal O(\rho^{n+1})$ and hence
  $\si-\tau=-\tfrac{1}{2n}\tau^{n+1}(-\nu)+\Cal O(\rho^{n+2})$ and
  hence $c_1(h,g)=-c_1(g,h)$. The second claim follows similarly.
\end{proof}

\begin{remark}\label{rem3.1} The usual classification results for
    invariant differential operators apply only to irreducible bundles, i.e.~natural
    vector bundles induced by irreducible representations of the conformal group and
    not to tractor bundles. However, we can use these results to prove that
    $D^A:\Ga(\Cal E[1])\to\Ga(\Cal TK)$ and $\nabla_a^{\Cal T}D^A:\Ga(\Cal
    E[1])\to\Om^1(K,\Cal TK)$ are the unique conformally invariant differential
    operators with the given source and target. It is known that the only invariant
    differential operator of low order defined on $\Ga(\Cal E[1])$ with values in an
    irreducible bundle that is non-zero on conformally flat manifolds is the
    ``conformal--to--Einstein operator'' that has values in $S^2_0T^*K[1]$. We'll
    discuss this operator in more detail in Section \ref{3.9}.

    Now each quotient of two subsequent components of the filtration $$\Cal
    T^1K\subset(\Cal T^1K)^\perp\subset\Cal TK$$ from Section \ref{2.4} splits into a
    direct sum of irreducible bundles and $S^2_0T^*K[1]$ is not among these bundles. Now
    given an invariant differential operator $\Ga(\Cal E[1])\to\Ga(\Cal TK)$,
    projecting back to sections of $\Cal TK/(\Cal T^1K)^\perp\cong\Cal E[1]$ one has
    to obtain a multiple of the identity. Hence subtracting an appropriate multiple
    of $D^A$, one obtains an operator $\Ga(\Cal E[1])\to\Ga((\Cal T^1K)^\perp)$. Now
    the projection of this to $\Ga((\Cal T^1K)^\perp/\Cal T^1K)$ has to vanish,
    whence the values actually have to lie in $\Ga(\Cal T^1K)$. This has to vanish
    since $\Cal T^1K$ is irreducible and not isomorphic to $\Cal E[1]$ or
    $S^2_0T^*K[1]$.

    Similar arguments apply to $\nabla^{\Cal T}_aD^A$ using the induced filtration of
    $T^*K\otimes\Cal TK$, which again leads to quotients that are direct sums of
    irreducible bundles.  This time $S^2_0T^*K[1]\subset T^*K\otimes (\Cal
    T^1K)^\perp/\Cal T^1K$ is among these bundles, but for all the other bundles
    obtained in this way it is easy to also see that there are no possible invariant
    operators coming from $\Ga(\Cal E[1])$ since there is not enough room for
    curvature terms.

  In particular, any invariant differential operator $\Gamma(\Cal
  E[1])\to\Om^1(K,\Cal TK)$ has to have values in $\Ga(T^*K\otimes(\Cal T^1K)^\perp)$
  and projecting to $\Ga(T^*K\otimes (\Cal T^1K)^\perp/\Cal T^1K)$ has to lead to a
  multiple of the conformal--to--Einstein operator. In particular, $\nabla^{\Cal
    T}_aD^A$ induces the conformal--to--Einstein operator in this way. Now if we have
  given an invariant differential operator $\Gamma(\Cal E[1])\to\Om^1(K,\Cal TK)$ we
  can subtract an appropriate multiple of $\nabla^{\Cal T}_aD^A$ to obtain an
  invariant operator $\Gamma(\Cal E[1])\to \Gamma(T^*K\otimes\Cal T^1K)$, which has
  to vanish by the above considerations.
  \end{remark}

\subsection{The contribution from the trace-free part}\label{3.2}
We next need another element of tractor calculus that, again, concerns
one-forms with values in the standard tractor bundle. Returning to the
setting of a general conformal manifold $K$, for $k=1,\dots,n$, there
is a natural bundle map $\partial^*:\La^kT^*K\otimes \Cal
TK\to\La^{k-1}T^*K\otimes\Cal TK$, which is traditionally called the
\textit{Kostant codifferential}. This has the crucial feature that
$\partial^*\o\partial^*=0$, so for each $k$, one obtains nested
natural subbundles $\im(\partial^*)\subset\ker(\partial^*)$ and hence
there is the subquotient $\Cal
H_k=\ker(\partial^*)/\im(\partial^*)$. To make this explicit in low
degrees let us write the spaces $\Lambda^kT^*K\otimes\Cal TK$ for
$k=0,1,2$ in the obvious extension of the vector notation for standard
tractors:
$$
\begin{pmatrix} \Cal E[1] \\ \Cal E_c[1] \\ \Cal E[-1]\end{pmatrix}
\overset{\partial^*}{\longleftarrow}
\begin{pmatrix} \Cal E_a[1] \\ \Cal E_{ac}[1] \\ \Cal E_a[-1]\end{pmatrix}
\overset{\partial^*}{\longleftarrow}
\begin{pmatrix} \Cal E_{[ab]}[1] \\ \Cal E_{[ab]c}[1] \\ \Cal E_{[ab]}[-1]\end{pmatrix} .
$$
Here $\Cal E_{[ab]}$ is the abstract index notation for $\La^2T^*K$.

The operators $\partial^*$ are discussed in \cite{book}, but what we need here
follows from some simple facts and observations, as follows. The maps $\partial^*$
are conformally invariant bundle maps, so they are induced by linear maps between
representations of $CO(n)$ which are equivariant for the action of the
group. Equivariancy under dilations implies that $\partial^*$ maps each row to the
row below, so in particular the bottom row is contained in the kernel of
$\partial^*$. Moreover, Kostant's version of the Bott-Borel-Weil Theorem implies that
$\Cal H_0\cong \Cal E[1]$ and $\Cal H_1\cong \Cal E_{(ab)_0}[1]$ (symmetric
trace-free part). In particular, $\partial^*:T^*K\otimes\Cal TK\to\Cal
  TK$ has to map onto the two bottom rows, so $\im(\partial^*)=(\Cal T^1K)^\perp$.
Hence $\Cal H_0$ coincides with the natural quotient bundle $\Cal E[1]$ of $\Cal TK$
considered above. This also implies that $\partial^*$ maps the top slot
  of $T^*K\otimes\Cal TK$ isomorphically onto the middle slot of $\Cal TK$, while
its restriction to the middle slot must be a non-zero multiple of the trace. Hence
$\ker(\partial^*)\subset T^*K\otimes\Cal TK$ consists exactly of those
elements for which the top slot vanishes and the middle slot is trace-free.

  Similarly, we conclude that $\partial^*:\La^2T^*K\otimes\Cal TK\to
    T^*K\otimes\Cal TK$ has to map the top slot injectively into the middle slot
  and the middle slot onto the bottom slot of $T^*K\otimes\Cal TK$.
  This shows how $\Cal H_1\cong \Cal E_{(ab)_0}[1]$ naturally arises as the
  subquotient $\ker(\partial^*)/\im(\partial^*)$ of $T^*K\otimes\Cal TK$.

Now similarly to the tractor-D operator, the machinery of BGG sequences constructs a
conformally invariant \textit{splitting operator}
$$
S:\Ga(\Cal H_1K)\to \Ga(\ker(\partial^*))\subset \Om^1(K,\Cal TK).
$$ Apart from the fact that for the projection $\pi_H:\ker(\partial^*)\to\Cal H_1$
one obtains $\pi_H(S(\al))=\al$, this operator is characterized by the single
property that, for the covariant exterior derivative $d^{\nabla^{\Cal T}}$ induced by
the tractor connection, one gets $\partial^*\o d^{\nabla^{\Cal T}}(S(\al))=0$ for any
$\al\in\Ga(\Cal H_1)$ \cite{CSS-annals}. Similar (but much easier)
  arguments as in Remark \ref{rem3.1} show that, up to constant multiples, $S$ is the
  unique invariant differential operator mapping $S^2_0T^*K$ to $\Om^1(K,\Cal TK)$.

To compute the explicit expression for $S$, we again use the notation of triples.
\begin{lemma}\label{lem3.2}
  Let $K$ be a conformal manifold and let $g$ be a metric in the conformal
  class. Then for $\ph=\ph_{ab}\in\Ga(\Cal E_{(ab)_0}[1])$, the section
  $S(\ph)\in\Om^1(K,\Cal TK)$ is, in the splitting determined by $g$, given by
  \begin{equation}\label{S-formula}
        (0,\ph_{ab}, \tfrac{-1}{n-1}\mathbf{g}^{ij}\nabla_i\ph_{aj}). 
  \end{equation}
\end{lemma}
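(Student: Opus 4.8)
The plan is to identify $S(\ph)$ by its characterizing property: it is the unique element of $\Om^1(K,\Cal TK)$ lying in $\ker(\partial^*)$ whose projection to $\Cal H_1$ is $\ph$ and which satisfies $\partial^*\circ d^\nabla(S(\ph))=0$. First I would write the general ansatz for an element of $\ker(\partial^*)$ in degree one: as noted just before the statement, $\ker(\partial^*)$ consists exactly of triples $(0,\ph_{ab},\psi_a)$ with $\ph_{ab}$ symmetric and trace-free (the symmetry is forced since the top slot, which maps isomorphically to the antisymmetric part, vanishes), and $\psi_a\in\Ga(\Cal E_a[-1])$ is so far undetermined. The normalization $\pi_H(S(\ph))=\ph$ is automatic from this form, so the whole content is to pin down $\psi_a$ from the condition $\partial^*d^\nabla(S(\ph))=0$.

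Next I would compute $d^\nabla(S(\ph))\in\Om^2(K,\Cal TK)$ in the splitting determined by $g$, using the formula \eqref{trac-conn} for the tractor connection together with the usual expression for the covariant exterior derivative (antisymmetrizing $\nabla$ over the form indices). Writing the result as a triple $(\Xi_{[ab]},\Xi_{[ab]c},\Xi_{[ab]})$ in the degree-two vector notation, the top slot comes from $\nabla_{[a}\si-\mu_{a]}$ applied to $(0,\ph_{\bullet c},\psi_\bullet)$, hence is $-\ph_{[ab]}=0$ by trace-freeness/symmetry — good, as it must be, since $\partial^*$ of the top slot would be the only obstruction to $\ker(\partial^*)$-closedness that cannot be cancelled. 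The middle slot is $\nabla_{[a}\ph_{b]c}+\mathbf g_{c[b}\psi_{a]}+\Rho_{c[a}\ph_{b]?}$ — more precisely it involves $\nabla_{[a}\ph_{b]c}$, the term $\mathbf g_{c[a}\psi_{b]}$ coming from $\mathbf g_{ab}\nu$ in \eqref{trac-conn}, and a Schouten term $\Rho_{[a|c|}\cdot 0$ which vanishes because the $\si$-slot is zero. Then $\partial^*$ applied to this degree-two element: from the description of $\partial^*$ above, on the degree-two component the top slot maps into the middle slot of the degree-one part and the middle slot maps (by a trace) onto the bottom slot of the degree-one part. Since our top slot $\Xi_{[ab]}$ already vanishes, $\partial^* d^\nabla(S(\ph))$ reduces to the trace of the middle slot $\Xi_{[ab]c}$ over an appropriate pair of indices, which produces $\mathbf g^{ij}\nabla_i\ph_{aj}$ together with a multiple of $\psi_a$ (the trace of $\mathbf g_{c[a}\psi_{b]}$). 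Setting this to zero and computing the numerical constant — which is where the factor $\tfrac{-1}{n-1}$ emerges, the $n-1$ being the trace of $\mathbf g_{ab}$ minus one from the antisymmetrization — yields $\psi_a=\tfrac{-1}{n-1}\mathbf g^{ij}\nabla_i\ph_{aj}$, giving \eqref{S-formula}.

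The routine part is the index bookkeeping in the two traces; the one genuine point requiring care is the normalization of $\partial^*$ on the degree-two component, since the constant $\tfrac{-1}{n-1}$ depends on it. I would fix $\partial^*$ by the standard convention that, on one-forms, $\partial^*$ of the middle slot $\al_{ab}$ is $\mathbf g^{ij}\al_{ij}$ (the plain trace, matching the statement that its restriction to the middle slot is ``a non-zero multiple of the trace''), and then the corresponding action on the degree-two middle slot $\beta_{[ab]c}$ is $\mathbf g^{ij}\beta_{[ai]j}$ up to a fixed combinatorial factor; tracking this factor through is the only step where an error would change the answer, so I would double-check it against the known conformal invariance of $S$ (equivalently, against \eqref{trac-transf} and \eqref{Rho-transf}: the transformation terms in $\psi_a$ under $g\mapsto e^{2f}g$ must cancel exactly, which fixes the coefficient uniquely). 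Conformal invariance of the resulting operator then need not be reproved from scratch — it is guaranteed by the general BGG machinery invoked before the lemma — but it serves as a useful consistency check on the constant.
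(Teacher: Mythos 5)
Your proposal follows essentially the same route as the paper's proof: write the ansatz $(0,\ph_{ab},\ps_a)$ forced by $\ker(\partial^*)$ and the normalization $\pi_H(S(\ph))=\ph$, compute $d^\nabla$ in the splitting of $g$ via \eqref{trac-conn} (the Schouten term drops out because the top slot vanishes), and pin down $\ps_a$ from $\partial^*d^\nabla(S(\ph))=0$, which reduces to a trace of the middle slot of the resulting two-form. One small simplification the paper exploits, which removes the normalization worry you flag: since the condition is homogeneous, the scale of $\partial^*$ is irrelevant --- by naturality it must be a nonzero multiple of contraction by $\mathbf{g}^{bc}$ and only its kernel matters, so the constant $\tfrac{-1}{n-1}$ comes entirely from the trace computation (trace-freeness of $\ph$ kills one term and $\mathbf{g}^{bc}\ps_{[a}\mathbf{g}_{b]c}=\tfrac{n-1}{2}\ps_a$), not from any choice of normalization of $\partial^*$.
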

\begin{proof}
From above, we know that $S$ has values in $\ker(\partial^*)$ and that this implies
that the first component of $S(\ph)$ has to be zero. The fact that $\pi_H\o S$ is the
identity map then shows that the middle component has to coincide with
$\ph_{ab}$. Thus it remains to determine the last component, which we temporarily
denote by $\ps=\ps_a\in\Ga(\Cal E_a[-1])$. This can be determined by exploiting the
fact that $\partial^*\o d^{\nabla^{\Cal T}}(S(\ph))=0$. To compute $d^{\nabla^{\Cal T}}(0,\ph_{bc},\ps_b)$,
we first have to use formula \eqref{trac-conn} to compute
$\nabla^{\Cal T}_a(0,\ph_{bc},\ps_b)$ viewing the form index $b$ as a mere ``passenger
index''. This leads to $(-\ph_{ba},\nabla_a\ph_{bc}+\mathbf{g}_{ac}\ps_b,*)$ where we
don't compute the last component, which will not be needed in what follows. Then we
have to apply twice the alternation in $a$ and $b$, which kills the first component
by symmetry of $\ph$ and leads to $2(\nabla_{[a}\ph_{b]c}-\ps_{[a}\mathbf{g}_{b]c})$
in the middle component. From the description of $\partial^*$ above we know that
$\partial^*\o d^{\nabla^{\Cal T}}(S(\ph))=0$ is equivalent to the fact that this middle
component lies in the kernel of a surjective natural bundle map to $\Cal E_a[-1]$.
By naturality, this map has to be a nonzero multiple of the contraction by
$\mathbf{g}^{bc}$. Using trace-freeness of $\ph$, we conclude that $\partial^*\o
d^{\nabla^{\Cal T}}(S(\ph))=0$ is equivalent to
$$
0=\mathbf{g}^{ij}(-\nabla_i\ph_{aj})-(n-1)\ps_a, 
$$
which gives the claimed formula. 
\end{proof}

Now we return to our setting $\barm=M\cup\partial M$, and a class
$\Cal G$ of metrics with $N=n$ as before. Given two metrics
$g,h\in\Cal G$, we now consider the trace-free part
$(h_{ij}-g_{ij})^0$ of $h_{ij}-g_{ij}$ with respect to $g$, which
defines a smooth section of $\Cal E_{(ab)_0}$. Thus for the density
$\si\in\Ga(\Cal E[1])$ determined by $g$, we can apply the splitting
operator $S$ to $\si(h_{ij}-g_{ij})^0$, to obtain a $\Cal
T\barm$-valued one-form $\ph_a^B$. We next prove that this has the
right asymptotic behavior to apply $\star_g$ and construct a boundary
value which lies in $\Om^{n-1}(\partial M,\Cal T\partial M)$, as we
did for the trace part in Section \ref{3.1} above.

Choosing a local defining function $\rho$ for the boundary, we get the function
$\mu_{ij}$ defined in \eqref{h-g}, and then 
\begin{equation}\label{h-g0}
  (h_{ij}-g_{ij})^0=\rho^{n-2}(\mu_{ij}-\tfrac1n\rho^2\mu g_{ij})+\Cal O(\rho^{n-1}), 
\end{equation}
and clearly $\mu^0_{ij}:=\mu_{ij}-\tfrac1n\rho^2\mu g_{ij}$ defines a section of
$\Cal E_{(ab)_0}$ that is smooth up to the boundary. 

\begin{prop}\label{prop3.2}
  In the setting and notation of Proposition \ref{prop3.1} and using $\mu_{ij}^0$ as
  defined above, we get:

  (1) The form $S(\si(h_{ij}-g_{ij})^0)\in\Cal E_a^A$ is given by
  \begin{equation}\label{c21}
     -\rho^{n-2}\overline{g}^{ij}\rho_i\mu^0_{aj}\overline{\si}^{-1}\mathbf{X}^A+\Cal
     O(\rho^{n-1}).
  \end{equation}

  (2) The form $\star_gS(\si(h_{ij}-g_{ij})^0)$ is smooth up to the boundary and its
  boundary value is given by
  \begin{equation}\label{c22}
     -\overline{g}^{ij}\overline{g}^{k\ell}\rho_i\rho_k\mu^0_{j\ell}\vol_{\overline{g}_\infty}
     \overline{\si}_{\infty}^{-1}\mathbf{X}^A.
  \end{equation}
  This is perpendicular to $N^A|_{\partial M}$ and thus by Proposition
    \ref{prop2.5} defines a form in $\Om^{n-1}(\partial M,\Cal T\partial M)$.
\end{prop}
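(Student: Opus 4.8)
The plan is to mirror, almost verbatim, the argument used for the trace part in Proposition \ref{prop3.1}, with the tractor-$D$ splitting operator replaced by the BGG splitting operator $S$ described in Lemma \ref{lem3.2}. First I would establish part (1). Start from the expansion \eqref{h-g0}, so that $\si(h_{ij}-g_{ij})^0 = \si\rho^{n-2}\mu^0_{ij} + \Cal O(\rho^{n-1})$, and pass to the splitting determined by $\overline{g}=\rho^2g$, in which $\overline\si=\si/\rho$ is smooth and nonvanishing up to the boundary. In that scale the section to which we apply $S$ is $\overline\si^{2-n}\overline\si^{n-2}\si\rho^{n-2}\mu^0_{ij}$... more simply: $\si\rho^{n-2} = \overline\si\,\rho^{n-1}$, so the middle (degree-zero-in-$\partial^*$) slot of the argument of $S$ is $\Cal O(\rho^{n-1})$. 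Now apply the explicit formula \eqref{S-formula}: the middle component of $S(\ph)$ equals $\ph_{ab}$, hence is $\Cal O(\rho^{n-1})$; the top component is zero; and the bottom component is $\tfrac{-1}{n-1}\mathbf{g}^{ij}\nabla_i\ph_{aj}$. When a covariant derivative $\nabla_i$ (tractor or Levi-Civita of $\overline g$) hits $\ph_{aj} = \overline\si\rho^{n-1}\mu^0_{aj}+\Cal O(\rho^n)$, the only term that is not $\Cal O(\rho^{n-1})$ is the one differentiating $\rho^{n-1}$, producing $(n-1)\rho^{n-2}\rho_i\cdot\overline\si\mu^0_{aj}+\Cal O(\rho^{n-1})$. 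Contracting with $\mathbf{g}^{ij}$ and multiplying by $\tfrac{-1}{n-1}$ gives a bottom component $-\rho^{n-2}\overline\si\,\mathbf{g}^{ij}\rho_i\mu^0_{aj}+\Cal O(\rho^{n-1})$; since a triple $(0,0,\psi_a)$ corresponds to $\psi_a\mathbf{X}^A$ and $\overline\si\mathbf{g}^{ij}=\overline\si^{-1}\overline g^{ij}$, this is exactly \eqref{c21}. The routine check is just bookkeeping of which derivatives contribute at leading order, identical in spirit to the computation in Proposition \ref{prop3.1}; there is no new obstacle here.

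For part (2) I would apply $\star_g$ to the one-form from part (1), exactly as in the proof of Proposition \ref{prop3.1}(2). Writing the volume form of $g$ as $\si^{-n}\bep_{a_1\dots a_n}$ and using that $\star_g$ of the one-form $\rho^{n-2}\overline g^{ij}\rho_i\mu^0_{aj}$ is obtained by raising the free index with $g^{-1}$ and contracting into the volume form, one gets $\si^{-n}\rho^{n-2}g^{k\ell}\,(\overline g^{ij}\rho_i\mu^0_{kj})\,\bep_{\ell a_1\dots a_{n-1}}$ up to $\Cal O(\rho)$. Now $\si^{-2}g^{k\ell}=\mathbf{g}^{k\ell}$ and $\si^{2-n}\rho^{n-2}=\overline\si^{\,2-n}$, and rewriting $\si^{-n}\rho^{n-2}\bep = \overline\si^{-n}\rho^{\,?}\cdots$ — more cleanly, combine exactly as in Proposition \ref{prop3.1}: $\si^{-n}\rho^{n-2}g^{k\ell}\bep_{\ell\dots}$ together with one more factor of $\overline\si^{-1}$ from \eqref{c21} reorganizes into $\overline\si^{-n}\overline g^{k\ell}\bep_{\ell\dots}$, whose leading term is the volume form $\vol_{\overline g_\infty}$ contracted with the $\overline g$-unit normal $\overline g^{k\ell}\rho_k$. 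Carrying the remaining contraction $\overline g^{ij}\rho_i\mu^0_{kj}$ along, the boundary value becomes $-\overline g^{ij}\overline g^{k\ell}\rho_i\rho_k\mu^0_{j\ell}\,\vol_{\overline g_\infty}\,\overline\si_\infty^{-1}\mathbf{X}^A$, which is \eqref{c22}. Smoothness up to the boundary is manifest from the expansion, and since the boundary value is a multiple of $\mathbf{X}^A$ it is automatically orthogonal to $N^A$ (as $\langle \mathbf{X},N\rangle=0$, both lying appropriately); hence it descends to an element of $\Om^{n-1}(\partial M,\Cal T\partial M)$ via the identification of Section \ref{2.5}.

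The step that requires the most care is the same one as in Proposition \ref{prop3.1}: verifying that the tractor connection $\nabla_b$, when applied to the $\Cal O(\rho^{n-1})$ one-form $S(\si(h-g)^0)$ (before taking $\star_g$), contributes no term of order lower than $\rho^{n-2}$ in a slot other than the $\mathbf{X}^A$-slot. This is why part (1) is phrased for $S(\si(h_{ij}-g_{ij})^0)$ itself with error $\Cal O(\rho^{n-1})$, rather than for its exterior covariant derivative; the leading $\rho^{n-2}$ behavior of the relevant $(n-1)$-form comes, after $\star_g$, purely from the position-zero ($\mathbf{X}^A$) component, exactly because the middle slot of $S(\si(h-g)^0)$ is $\Cal O(\rho^{n-1})$ and differentiation raises the order by at most one. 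One should also note, as the only genuinely new point compared to Section \ref{3.1}, that trace-freeness of $\mu^0_{ij}$ with respect to $g$ (equivalently, $\mathbf g^{ij}\mu^0_{ij}=\Cal O(\rho^2)$ after the weight shift) is what makes the formula \eqref{S-formula} applicable and keeps the bottom-slot contraction clean; but this is built into the definition of $\mu^0_{ij}$ and the hypothesis that $(h-g)^0$ is the $g$-trace-free part, so no further work is needed. Altogether the proof is a direct transcription of the Proposition \ref{prop3.1} argument with $\tfrac1n\nabla_b D^A$ replaced by $S$, and I would present it as such.
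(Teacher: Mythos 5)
Your proposal is correct and follows essentially the same route as the paper's proof: apply the explicit formula of Lemma \ref{lem3.2} in the splitting determined by $\overline{g}$, observe that the only contribution to the bottom slot that is not $\Cal O(\rho^{n-1})$ comes from differentiating $\rho^{n-1}$ in $\overline{\si}\rho^{n-1}\mu^0_{ij}$, and then apply $\star_g$ and pass to the boundary exactly as in Proposition \ref{prop3.1}, with orthogonality to $N^A$ following since the boundary value is a multiple of $\mathbf{X}^A$. The only (harmless) slip is in your closing commentary: no covariant exterior derivative is applied to $S(\si(h_{ij}-g_{ij})^0)$ in this construction, so the ``step requiring care'' you describe is not actually part of this proof.
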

\begin{proof}
(1) As before, we will work in the splitting determined by $\overline{g}_{ij}$
  throughout the proof. By construction
  $\si(h_{ij}-g_{ij})^0=\si\rho^{n-2}\mu_{ij}^0=\overline{\si}\rho^{n-1}\mu_{ij}^0$
  is $\Cal O(\rho^{n-1})$. Using Lemma \ref{lem3.2}, we see that the first slot of
  $S(\si(h_{ij}-g_{ij})^0)$ vanishes and its middle slot is $\Cal
  O(\rho^{n-1})$. Using the observation from the beginning of the proof of
  Proposition \ref{3.1} we see that the covariant derivative of
  $\overline{\si}\rho^{n-1}\mu_{ij}^0$, with respect to the Levi-Civita connection of $\overline{g}_{ij}$, is given
  by $(n-1)\overline{\si}\rho^{n-2}\rho_k\mu_{ij}^0+\Cal O(\rho^{n-1})$.  Using this,
  the claimed formula follows immediately from Lemma \ref{lem3.2} and the fact that
  $\overline{g}^{ij}=\overline{\si}^2\mathbf{g}^{ij}$.

  (2) Proceeding as in the proof of Proposition \ref{prop3.1}, we now show that
\mbox{$\star_gS((h_{ij}-g_{ij})^0)$} is given by
  $$
 -\overline{g}^{ij}\rho_i\mu^0_{kj}\overline{g}^{k\ell}\bep_{\ell a_1\dots
   a_{n-1}}\overline{\si}^{-n-1}\mathbf{X}^A. 
 $$

 This is evidently smooth up to the boundary and, as observed there,
 $\overline{\si}^{-n}\bep_{a_1\dots a_n}$ is the volume form of
 $\overline{g}_{ij}$. Writing $\vol_{\overline{g}}|_{\partial M}$ as
 $d\rho\wedge\vol_{\overline{g}_\infty}$ and using that the image of $d\rho$ in
 $\Om^1(\partial M)$ vanishes, we directly get the claimed formula for the boundary
 value. The final statement follows the same argument as in Proposition \ref{prop3.1}.
\end{proof}

Similarly as in Section \ref{3.1} above, this admits a more natural interpretation
when working with densities. Again fixing $N=n$, instead of \eqref{h-g} we can start
from
\begin{equation}\label{h-g-dens}
  h_{ij}=g_{ij}+\si^{n-2}\nu_{ij}, 
\end{equation}
where $\nu_{ij}\in\Ga(\Cal E_{(ij)}[-n+2])$ now is a weighted symmetric two-tensor
that is smooth up to the boundary. For a choice of local defining function $\rho$,
the relation to \eqref{h-g} is described by
$\nu_{ij}=(\frac{\rho}{\si})^{n-2}\mu_{ij}$. This immediately implies that
$$
\mathbf{g}^{ij}\nu_{ij}=(\tfrac{\rho}{\si})^{n-2}\tfrac1{\si^2}g^{ij}\mu_{ij}=(\tfrac{\rho}{\si})^n\mu=\nu,
$$
where $\nu\in\Ga(\Cal E[-n])$ is the density used in Section
\ref{3.1}.  The tracefree part $\nu^0_{ij}$, then of course is $\nu_{ij}-\tfrac1n
\mathbf{g}_{ij}\nu=(\frac{\rho}{\si})^{n-2}\mu^0_{ij}$. On the other hand, the fact
that $g_{ij}$ is ALH shows that $\tfrac{1}{\rho^2}g^{ij}\rho_i\rho_j$ is identically
one along the boundary. Using $g^{ij}=\si^2\mathbf{g}^{ij}$, we conclude that
$\tfrac{\si}{\rho}\mathbf{g}^{ij}\rho_i\in\Cal E^a[-1]$ coincides, along $\partial M$,
with the conformal unit normal $n^j$ from Section \ref{2.5}. Hence
$\overline{g}^{ij}\rho_j=\overline{\si}n^i$ and hence we can rewrite formula
\eqref{c21} as $-\si^{n-2}\nu^0_{ij}n^j\mathbf{X}^A+\Cal O(\rho^{n-1})$, where we
have extended $n^j$ arbitrarily off the boundary.

To rewrite the formula \eqref{c22} for the boundary value in a similar
way, we use the observation that $\vol_{\overline{g}_\infty}$
corresponds to $(\tfrac{\rho}{\si})^{n-1}$, as discussed in Section
\ref{3.1}. Using this and the above, see that \eqref{c22} equals
      \begin{equation}\label{tfp-dens}
-n^in^j(\nu^0_{ij})_\infty\mathbf{X}^A, 
\end{equation}
where $(\nu^0_{ij})_\infty$ indicates the boundary value of $\nu^0_{ij}$. Using this
formulation, it is easy to prove that we obtain another cocycle.

\begin{cor}\label{cor3.2}
Let us denote the section of $\Cal T\partial M[-n+1]$ associated to $g,h\in\Cal G$
via formula \eqref{c22} by $c_2(g,h)$. Then $c_2$ is a cocycle in the sense that
$c_2(h,g)=-c_2(g,h)$ and that $c_2(g,k)=c_2(g,h)+c_2(h,k)$ for $g,h,k\in\Cal G$.
\end{cor}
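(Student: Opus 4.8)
The plan is to imitate the proof of Corollary \ref{cor3.1}, using the density-based formulation that was just set up. The key observation is that, by equation \eqref{c22} and the ensuing discussion, $c_2(g,h)$ equals (up to sign and a choice of parallel identification) the boundary value $-n^in^j(\nu^0_{ij})_\infty\mathbf{X}_A$, where $\nu_{ij}\in\Ga(\Cal E_{(ij)}[-n+2])$ is the weighted symmetric two-tensor, smooth up to the boundary, defined by $h_{ij}=g_{ij}+\si^{n-2}\nu_{ij}$ as in \eqref{h-g-dens}, $\si$ is the density of $g$, and $n^i$ is the conformal unit normal determined by $[\Cal G]$ on $\partial M$ (which is the same for all metrics in $\Cal G$, hence independent of which metric in $\Cal G$ we use to compute it). So the whole statement reduces to understanding how $\nu^0_{ij}$ behaves under swapping the two metrics and under composing three metrics.

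First I would prove antisymmetry. From $h_{ij}=g_{ij}+\si^{n-2}\nu_{ij}$ we immediately get $g_{ij}=h_{ij}-\si^{n-2}\nu_{ij}$. To express the right-hand side in the form required for the pair $(h,g)$, I need to replace $\si^{n-2}$ by $\tau^{n-2}$, where $\tau\in\Ga(\Cal E[1])$ is the density of $h$. But \eqref{tau-si-dens} gives $\tau-\si=\Cal O(\rho^{n+1})$, so $\tau^{n-2}=\si^{n-2}+\Cal O(\rho^{n+1})$, and since $\nu_{ij}$ is smooth up to the boundary, $\si^{n-2}\nu_{ij}=\tau^{n-2}\nu_{ij}+\Cal O(\rho^{n+1})$. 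Hence $g_{ij}=h_{ij}+\tau^{n-2}(-\nu_{ij})+\Cal O(\rho^{n+1})$, which shows that the tensor associated to the pair $(h,g)$ is $-\nu_{ij}$ up to $\Cal O(\rho^{n+1})$; taking trace-free parts (with respect to the conformal metric $\mathbf{g}_{ij}$, which again is common to all of $\Cal G$) and then boundary values kills the error term, so $(\nu^0_{ij})^{(h,g)}_\infty=-(\nu^0_{ij})^{(g,h)}_\infty$, giving $c_2(h,g)=-c_2(g,h)$.

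For the additivity, let $\nu^{gh}_{ij},\nu^{hk}_{ij},\nu^{gk}_{ij}$ be the tensors attached to the pairs $(g,h)$, $(h,k)$, $(g,k)$, with densities $\si,\tau,\upsilon$ for $g,h,k$ respectively. Adding the defining relations $h_{ij}=g_{ij}+\si^{n-2}\nu^{gh}_{ij}$ and $k_{ij}=h_{ij}+\tau^{n-2}\nu^{hk}_{ij}$ gives $k_{ij}=g_{ij}+\si^{n-2}\nu^{gh}_{ij}+\tau^{n-2}\nu^{hk}_{ij}$. Again $\tau^{n-2}=\si^{n-2}+\Cal O(\rho^{n+1})$ and $\nu^{hk}_{ij}$ is smooth up to the boundary, so $k_{ij}=g_{ij}+\si^{n-2}(\nu^{gh}_{ij}+\nu^{hk}_{ij})+\Cal O(\rho^{n+1})$; comparing with $k_{ij}=g_{ij}+\si^{n-2}\nu^{gk}_{ij}$ shows $\nu^{gk}_{ij}=\nu^{gh}_{ij}+\nu^{hk}_{ij}+\Cal O(\rho^{n+1})$. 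Passing to trace-free parts and then boundary values (so the $\Cal O(\rho^{n+1})$ term drops out) yields $(\nu^0_{ij})^{gk}_\infty=(\nu^0_{ij})^{gh}_\infty+(\nu^0_{ij})^{hk}_\infty$, and contracting twice with the common conformal unit normal $n^i$ gives $c_2(g,k)=c_2(g,h)+c_2(h,k)$.

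The only subtlety — and the step I expect to require the most care in a fully rigorous write-up — is making sure the quantities being compared live in the same space. The tensor $\nu^0_{ij}$ attached to a pair has an \emph{a priori} meaning only as a section over $M$, but its trace-free part is taken with respect to the conformal metric $\mathbf{g}_{ij}$, which is determined by $[\Cal G]$ alone and hence common to all three pairs, and similarly $n^i$ depends only on $[\Cal G]$; so contracting $n^in^j$ into $(\nu^0_{ij})_\infty$ produces a density of weight $-n$ on $\partial M$ that can legitimately be added across the three pairs, and multiplying by the universal $\mathbf{X}_A$ lands everything in the same bundle $\Cal T\partial M[-n+1]$. Once this is noted, the argument is exactly the two displayed computations above together with the remark that an $\Cal O(\rho^{n+1})$ correction to a tensor that is smooth up to the boundary has vanishing boundary value. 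Since the error terms come with a factor $\rho^{n+1}$ (indeed even the weaker $\rho$ would suffice after we have isolated the $\si^{n-2}$ prefactor), there is no difficulty here, and the proof is a direct transcription of the proof of Corollary \ref{cor3.1}.
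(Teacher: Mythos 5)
Your proposal is correct and follows essentially the same route as the paper's (very brief) proof: compare the defining tensors attached to the various pairs of metrics, show they differ by a sign, respectively add up, modulo terms vanishing at the boundary, and use that the boundary data ($n^i$, $\mathbf{X}_A$, the conformal structure $[\Cal G]$) are common to all metrics in $\Cal G$. The one imprecision is your assertion that the trace-free projection is taken with respect to a conformal metric ``common to all of $\Cal G$'': in the definition of $c_2(h,g)$ the trace-free part is taken with respect to $h$, and the conformal classes $[g]$ and $[h]$ on $\barm$ differ in the interior; however, since $\rho^2h-\rho^2g=\Cal O(\rho^n)$, switching the metric used for the trace changes $\nu^0_{ij}$ only by terms vanishing at $\partial M$ --- precisely the content of the estimate $\tilde\nu^0_{ij}=-\nu^0_{ij}+\Cal O(\rho^{n-2})$ on which the paper's proof rests.
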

\begin{proof}
  Suppose that the tracefree part of $h_{ij}-g_{ij}$ with respect to $g$ is given by
  $\si^{n-2}\nu^0_{ij}$ for $\nu^0_{ij}\in\Ga(\Cal E_{ij}[2-n])$ and similarly the
  tracefree part of $(g_{ij}-h_{ij})^0$ with respect to $h$ corresponds to
  $\tilde\nu^0_{ij}$. Then one immediately verifies that
  $\tilde\nu^0_{ij}=-\nu^0_{ij}+\Cal O(\rho^{n-2})$, and thus using formula
    \eqref{tfp-dens} to compute boundary values readily implies
    $c_2(h,g)=-c_2(g,h)$. The second claim follows similarly. 
\end{proof}

\subsection{Diffeomorphisms}\label{3.3}
We next start to study the compatibility of the cocycles we have
constructed above with diffeomorphisms. There are various concepts of
compatibility here, and we have to discuss some background
first. Recall that a diffeomorphism $\Psi:\barm\to\barm$ of a manifold
with boundary maps $M$ to $M$ and $\partial M$ to $\partial M$. This
also shows that for $x\in\partial M$, the linear isomorphism
$T_x\Psi:T_xM\to T_{\Psi(x)}M$ maps the subspace $T_x\partial M$ to
$T_{\Psi(x)}\partial M$. It follows that for a defining
function $\rho$ for $\partial M$, also $\Psi^*\rho=\rho\o\Psi$ is a
defining function for $\partial M$. This also works for a local
defining function defined on a neighborhood of $\Psi(x)$, which gives
rise to a local defining function defined on a neighborhood of $x$.

Using these results implies that the relation $\sim_N$ on tensor
fields defined in Section \ref{2.3} is compatible with diffeomorphisms
in the sense that $t\sim_N\tilde t$ implies $\Psi^*t\sim_N\Psi^*\tilde
t$ for each $N>0$. In particular, given an equivalence class $\Cal G$
of conformally compact metrics, also $\Psi^*\Cal G$ is such an
equivalence class. We are particularly interested in the case that
$\Psi^*\Cal G=\Cal G$, in which we say that \textit{$\Psi$ preserves
  $\Cal G$}. The diffeomorphisms with this property clearly form a
subgroup of the diffeomorphism group $\Diff(\barm)$ which we denote by
$\Diff_{\Cal G}(\barm)$. From our considerations it follows
immediately that this is equivalent to the fact that there is one
metric $g\in\Cal G$ such that $\Psi^*g\in\Cal G$ or equivalently
$\Psi^*g\sim_{N-2}g$. (In \cite{CDG} an analogous property is phrased
by saying that $\Psi$ is an ``asymptotic isometry'' of $g$. We don't
use this terminology since $\Psi$ is not more compatible with $g$
than with any other metric in $\Cal G$.)

Recall from Section \ref{2.3} that all metrics in $\Cal G$ give rise to the same
conformal infinity on $\partial M$. This implies that for $\Psi\in\Diff_{\Cal
  G}(\barm)$ the restriction $\Psi_\infty:=\Psi|_{\partial M}$ is not only a
diffeomorphism, but actually a conformal isometry of the conformal infinity of $\Cal
G$. In particular, it induces a well-defined bundle automorphism on the standard
tractor bundle $\Cal T\partial M$ and hence we can pull back sections of $\Cal
T\partial M$ along $\Psi_\infty$. This also works for $n=3$ without
problems. Using this, we can prove the first and
simpler compatibility condition of our cocycles with diffeomorphisms.

\begin{prop}\label{prop3.3}
Consider a manifold $\barm=M\cup\partial M$ with boundary and a class $\Cal G$ of
metrics, in the case $N=n$. Then the cocycles constructed in Propositions
\ref{prop3.1} and \ref{prop3.2} are compatible with the action of a diffeomorphism
$\Psi\in\Diff_{\Cal G}(\barm)$ in the sense that for each such cocycle
$c(\Psi^*g,\Psi^*h)=(\Psi_\infty)^*c(g,h)$. Here $\Psi_\infty=\Psi|_{\partial M}$,
and on the right hand side we have the action of a conformal isometry of the
conformal infinity of $\Cal G$ (on $\partial M$) on tractor-valued forms.
\end{prop}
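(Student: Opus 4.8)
The plan is to verify that every construction entering the definition of the cocycles from Propositions \ref{prop3.1} and \ref{prop3.2} is natural with respect to diffeomorphisms, so that naturality is inherited by the boundary values. The key observation is that a diffeomorphism $\Psi\in\Diff_{\Cal G}(\barm)$ is, in particular, a conformal isometry between $(\barm,[g])$ and $(\barm,[\Psi^*g])$: since pullback commutes with the operations defining conformal compactness, $\Psi^*g$ is again conformally compact and $[\Psi^*g]=\Psi^*[g]$ as a conformal structure on all of $\barm$. Hence $\Psi$ induces an isomorphism $\Cal T\barm\to\Cal T\barm$ covering $\Psi$ which is compatible with the tractor metric and the tractor connection (and, in dimension $3$, with the induced ``abstract Schouten tensors''), and which intertwines $\mathbf X^A$, $D^A$, and the BGG splitting operator $S$, simply because all of these are defined by universal formulae from the conformal data. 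This is the conceptual heart of the argument and is where I would be most careful, but it is not really an obstacle: it is the standard functoriality of tractor calculus, applied to the (non-standard) situation of two conformal structures on $\barm$ related by a diffeomorphism rather than a conformal rescaling.

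Granting this, I would trace through the construction. First, $\Psi^*\si$ is the distinguished density of $\Psi^*g$ and $\Psi^*\tau$ that of $\Psi^*h$, so $\Psi^*(\tau-\si)$ is the density difference for the pair $(\Psi^*g,\Psi^*h)$. Applying naturality of $D^A$ and of the tractor connection gives $\nabla_b D^A(\Psi^*\tau-\Psi^*\si)=\Psi^*(\nabla_b D^A(\tau-\si))$, where on the right $\Psi^*$ denotes the action on $\Cal T\barm$-valued one-forms. Next, because $\Psi$ is a conformal isometry, $\star_{\Psi^*g}=\Psi^*\o\star_g\o(\Psi^{-1})^*$ on forms of the appropriate weight, so $\star_{\Psi^*g}\nabla_bD^A(\Psi^*\tau-\Psi^*\si)=\Psi^*\bigl(\star_g\nabla_bD^A(\tau-\si)\bigr)$ as $\Cal T\barm$-valued $(n-1)$-forms smooth up to the boundary. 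Taking boundary values, and using that $\Psi$ restricts to $\Psi_\infty$ on $\partial M$, that the isomorphism $\Cal T\barm|_{\partial M}\supset (N^A)^\perp\cong\Cal T\partial M$ from Section \ref{2.5} is natural, and that $\Psi_\infty$ preserves $N^A$ (being a conformal isometry of the boundary), we obtain $c_1(\Psi^*g,\Psi^*h)=(\Psi_\infty)^*c_1(g,h)$.

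For $c_2$ the argument is identical in structure. One checks that the trace-free part is natural, i.e.\ $(\Psi^*h_{ij}-\Psi^*g_{ij})^0$ taken with respect to $\Psi^*g$ equals $\Psi^*\bigl((h_{ij}-g_{ij})^0\bigr)$, which holds because both the difference of metrics and the trace-removal with respect to $g$ are tensorial operations commuting with pullback. Then naturality of the splitting operator $S$ gives $S(\Psi^*\si\,(\Psi^*h_{ij}-\Psi^*g_{ij})^0)=\Psi^*\bigl(S(\si(h_{ij}-g_{ij})^0)\bigr)$, and the same $\star$-equivariance and boundary-value naturality as before yield $c_2(\Psi^*g,\Psi^*h)=(\Psi_\infty)^*c_2(g,h)$. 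Since the claimed identity is linear in the cocycle, it holds for any member of the one- (or two-) parameter family built from $c_1$ and $c_2$.

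The only point requiring genuine care is the dimension-$3$ case, where the tractor connection on $\Cal T\partial M$ is not determined by the conformal structure alone but by the induced M\"obius structure. Here I would note that $\Psi$ maps the class of boundary-minimal representatives $\bar g$ with $H^{\bar g}=0$ for $[g]$ to the corresponding class for $[\Psi^*g]$, and that $\Psi$ intertwines the restrictions of the Schouten tensors used as abstract Schouten tensors in Section \ref{2.5}; hence $\Psi_\infty$ is not merely a conformal isometry but an isomorphism of M\"obius structures, so its induced action on $\Cal T\partial M$ is again compatible with the tractor connection, and the argument goes through unchanged. I expect the write-up to be short once the functoriality statement for tractors under a diffeomorphism (as opposed to a conformal rescaling) is formulated cleanly; that formulation, rather than any computation, is the main thing to get right.
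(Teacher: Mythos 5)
Your proposal is correct and follows essentially the same route as the paper: naturality of $\si,\tau$, of $D^A$ and the tractor connection under the pullback by $\Psi$ (viewed as a conformal isometry from $[\Psi^*g]$ to $[g]$), compatibility of $\star_g$ with the isometry $\Psi|_M$, passage to boundary values, and for $c_2$ the naturality of the trace-free part and of the splitting operator $S$. Your extra care about the identification $(N^A)^\perp\cong\Cal T\partial M$ and the M\"obius structure in dimension $3$ goes slightly beyond what the paper spells out, but it is a refinement of, not a departure from, the paper's argument.
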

\begin{proof}
This basically is a direct consequence of the invariance properties of the
constructions we use. If $g$ corresponds to $\si\in\Ga(\Cal E[1])$, then of course
$\Psi^*g$ corresponds to $\Psi^*\si$. Moreover, $\Psi_\infty$ defines a conformal
isometry between the conformal structures on $\partial M$ induced by $\Psi^*g$ and
$g$, respectively. Similarly, $\Psi^*h$ corresponds to $\Psi^*\tau$ and naturality of
the tractor constructions
implies that
$\nabla^{\Cal T}_aD_B(\Psi^*\tau-\Psi^*\si)$ (computed in the conformal structure $[\Psi^*g]$)
equals $\Psi^*(\nabla^{\Cal T}_aD_B(\tau-\si))$. Since $\Psi|_M$ is an isometry from $\Psi^*g$
to $g$, we get $\Psi^*\vol_g=\vol_{\Psi^*g}$, which implies compatibility with the
Hodge-star. Hence on $M$, we get
$$
\star_{\Psi^*g}\nabla^{\Cal T}_aD_B(\Psi^*\tau-\Psi^*\si)=\Psi^*(\star_g\nabla^{\Cal T}_aD_B(\tau-\si))
$$
and since both sides admit a smooth extension to the boundary, the boundary values
have to coincide, too. But these than are exactly $c_1(\Psi^*g,\Psi^*h)$ and the
pull back induced by the conformal isometry $\Psi_\infty$ of $c_1(g,h)$. This
completes the proof for $c_1$.

For $c_2$, we readily get that $(\Psi^*h-\Psi^*g)^0$ (tracefree part with respect to
$\Psi^*g$) coincides with $\Psi^*(h-g)^0$ (tracefree part with respect to $g$). Using
naturality of the splitting operator $S$, the proof is completed in the
same way as for $c_1$. 
\end{proof}

\subsection{Diffeomorphisms asymptotic to the identity}\label{3.4}
To move towards a more subtle form of compatibility of our cocycles with
diffeomorphisms, we need a concept of asymptotic relation between diffeomorphisms.

\begin{definition}\label{def3.4}
Let $\barm=M\cup\partial M$ be a manifold with boundary and let
$\Ps,\tilde\Ps:\barm\to\barm$ be diffeomorphisms.

(1) We say that $\Ps$ and $\tPsi$ are asymptotic of order $N>0$ and write
$\Psi\sim_N\tPsi$ if and only if for any function $f\in C^\infty(\barm, \Bbb R)$
we get $f\o\Psi\sim_Nf\o\tPsi$ in the sense of Section \ref{2.3}.

(2) For $N>0$, we define $\Diff_0^N(\barm)$ to be the set of diffeomorphisms which are
asymptotic to the identity $\id_{\barm}$ of order $N$.
\end{definition}

Since $\sim_N$ clearly defines an equivalence relation on functions,
we readily see that it is an equivalence relation on
diffeomorphisms. Moreover, since the pull back of a local defining
function for $\partial M$ along a diffeomorphism of $\barm$ again is a
local defining function, we conclude that $\Psi\sim_N\tPsi$ implies
$\Psi\o\Ph\sim_N\tPsi\o\Ph$ and $\Ph\o\Psi\sim_N\Phi\o\tPsi$ for any
diffeomorphism $\Phi$ of $\barm$. In particular, this shows either of
$\tPsi^{-1}\o\Psi\sim_N\id$ and $\tPsi\o\Psi^{-1}\sim_N\id$ is
equivalent to $\Psi\sim_N\tPsi$.

On the other hand, we need some observations on charts. Given a manifold
$\barm=M\cup\partial M$ with boundary, take a point $x\in\partial M$. Then by
definition, there is a chart $(U,u)$ around $x$, so $U$ is an open neighborhood of
$x$ in $\barm$ and $u:U\to u(U)$ is a diffeomorphism onto an open subset of an
$n$-dimensional half space. Then $u$ restricts to a diffeomorphism between the open
neighborhood $U\cap\partial M$ of $x$ in $\partial M$ and the open subspace
$u(U)\cap\Bbb R^{n-1}\x\{0\}$ of $\Bbb R^{n-1}$. Then by definition, the last
coordinate function $u^n$ is a local defining function of $\partial M$. Conversely,
any local defining function can locally be used as such a coordinate function in a
chart.

If $\Psi\in\Diff(\barm)$ is a diffeomorphism, then for a chart $(U,u)$
also $(\Psi^{-1}(U),u\o\Psi)$ is a chart. If $\tPsi$ is another
diffeomorphism such that $\tPsi|_{\partial M}=\Psi|_{\partial M}$,
then $V:=\Psi^{-1}(U)\cap\tPsi^{-1}(U)$ is an open subset in $\barm$
which contains $\Psi^{-1}(U\cap\partial M)$. For any tensor field $t$
defined on $U$, both $\Psi^*t$ and $\tPsi^*t$ are defined on $V$, and
can be compared asymptotically there. Using these observations, we
start by proving a technical lemma.

\begin{lemma}\label{lem3.4}
  Let $\barm=M\cup\partial M$ be a smooth manifold with boundary, 
  let $\Psi,\tPsi\in\Diff(\barm)$ be diffeomorphisms, and fix
  $N>0$. Then the following conditions are equivalent:

  \begin{itemize}
  \item[(i)] $\Psi\sim_{N+1}\tPsi$
  \item[(ii)] $\Psi|_{\partial M}=\tPsi|_{\partial M}$ and for any tensor field $t$
    on $\barm$ (including functions), we get $\Psi^*t\sim_N\tPsi^*t$.
  \item[(iii)] $\Psi|_{\partial M}=\tPsi|_{\partial M}$ and for each $x\in\partial M$, there
    is an chart $(U,u)$ for $\barm$, with $x\in U$,  whose coordinate functions $u^i$
    satisfy $\Psi^*u^i\sim_{N+1}\tPsi^*u^i$ locally around $\Psi^{-1}(x)$. 
  \end{itemize}
\end{lemma}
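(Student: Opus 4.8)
The plan is to prove the cycle of implications $(i)\Rightarrow(ii)\Rightarrow(iii)\Rightarrow(i)$, reducing everything to the behaviour of the coordinate expressions of $\Psi$ and $\tPsi$ in a boundary chart. First I would fix $x\in\partial M$ and a chart $(U,u)$ around $\Psi(x)=\tPsi(x)$ (note that $(i)$ forces $\Psi|_{\partial M}=\tPsi|_{\partial M}$, since asymptotic agreement of $f\o\Psi$ and $f\o\tPsi$ to any positive order in particular forces equality on $\partial M$). Working in the chart $u$ and, on the source side, in a chart whose last coordinate is the local defining function $\rho:=\Psi^*u^n=\tPsi^*u^n$ — which are asymptotic to order $N+1$ by hypothesis in $(i)$ — I would write $\Psi$ and $\tPsi$ as $\Bbb R^n$-valued maps $F=(F^1,\dots,F^n)$ and $\tilde F=(\tilde F^1,\dots,\tilde F^n)$, where $F^i=\Psi^*u^i$ and similarly for $\tilde F$. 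The content of $(i)$ is exactly $F^i-\tilde F^i=\Cal O(\rho^{N+1})$ for all $i$, i.e. $F^i=\tilde F^i+\rho^{N+1}G^i$ for smooth (up to the boundary) functions $G^i$.

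The step $(i)\Rightarrow(ii)$ is then a chain-rule computation. For a function $f$, $f\o\Psi$ in the chart is $f\o F$, and a first-order Taylor expansion of $f$ about the point $\tilde F$ gives $f\o F = f\o\tilde F + \sum_i (\partial_i f\o\tilde F)\,\rho^{N+1}G^i + \Cal O(\rho^{2N+2})$, so $f\o\Psi\sim_{N+1}f\o\tPsi$, recovering $(i)$ for functions on $U$; this also shows the Jacobians satisfy $\partial_j F^i - \partial_j\tilde F^i = \Cal O(\rho^N)$, using $\partial_j(\rho^{N+1}G^i)=(N+1)\rho^N(\partial_j\rho)G^i+\rho^{N+1}\partial_j G^i$ together with $\partial_j\rho$ being smooth and bounded. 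For a general tensor field $t$ of type $(p,q)$, the pullback $\Psi^*t$ in coordinates is a sum of products of components of $t$ evaluated at $F$ with entries of the Jacobian matrix $DF$ and its inverse; since $t\o F - t\o\tilde F = \Cal O(\rho^{N+1})$ by the scalar case applied componentwise, and $DF-D\tilde F=\Cal O(\rho^N)$, and $(DF)^{-1}-(D\tilde F)^{-1}=\Cal O(\rho^N)$ (as the inverse is a rational, hence smooth, function of the entries and $D\tilde F$ is invertible up to the boundary), the Leibniz-rule expansion of the difference $\Psi^*t-\tPsi^*t$ is a sum of terms each containing a factor that is $\Cal O(\rho^N)$, hence $\Psi^*t\sim_N\tPsi^*t$. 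The implication $(ii)\Rightarrow(iii)$ is essentially immediate: apply $(ii)$ to the coordinate functions $u^i$ regarded as smooth functions on $U$ — but this only gives $\Psi^*u^i\sim_N\tPsi^*u^i$, so one must argue the extra order. The gain of one order comes from the fact that the $u^i$ are genuine functions (rank-$0$ tensors) and that $\Psi^*u^i-\tPsi^*u^i$ vanishes on $\partial M$ (since $\Psi|_{\partial M}=\tPsi|_{\partial M}$), so dividing by the defining function already gains one order; more precisely, the scalar case of the argument above shows that asymptotic agreement of pullbacks of functions to order $N$ together with $\Psi|_{\partial M}=\tPsi|_{\partial M}$ forces the Jacobians to agree to order $N-1$ and then a bootstrap using the implicit relation $u^i\o F = u^i\o\tilde F + (\text{error})$ improves the order — so it is cleanest to prove $(ii)\Rightarrow(iii)$ directly from the coordinate description of $(ii)$ applied to $t=du^i$, which carries the Jacobian and lets one extract $F^i-\tilde F^i=\Cal O(\rho^{N+1})$ by integrating the derivative estimate along rays into the boundary. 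Finally $(iii)\Rightarrow(i)$ is again the scalar Taylor expansion: given an arbitrary $f\in C^\infty(\barm,\Bbb R)$ and $x\in\partial M$, pass to the chart from $(iii)$, write $f\o\Psi = (f\o u^{-1})\o F$ and $f\o\tPsi=(f\o u^{-1})\o\tilde F$, and Taylor expand $f\o u^{-1}$ about $\tilde F$ using $F-\tilde F=\Cal O(\rho^{N+1})$ to conclude $f\o\Psi\sim_{N+1}f\o\tPsi$ locally; since $x$ was arbitrary and the notion $\Cal O(\rho^{N+1})$ is local and chart-independent, this gives $(i)$.

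The main obstacle is the bookkeeping in the equivalence $(ii)\Leftrightarrow(iii)$, specifically isolating exactly where the single extra order of vanishing ($N$ versus $N+1$) is produced: the pullback of a tensor differs from the pullback of a function precisely in that it involves the Jacobian, which carries one fewer order of vanishing than the position difference $F-\tilde F$, so $(ii)$ with tensors encodes order-$N$ information that is equivalent to order-$(N+1)$ information about the map itself. I would handle this by always phrasing the estimates in terms of $F-\tilde F$ and its derivatives in a fixed boundary chart, using the elementary fact (implicit already in Section \ref{2.3}) that a smooth function vanishing on $\partial M$ is $\Cal O(\rho)$, so $\partial_j F^i-\partial_j\tilde F^i=\Cal O(\rho^N)$ is equivalent, after one integration in the normal direction and using $F^i=\tilde F^i$ on $\partial M$, to $F^i-\tilde F^i=\Cal O(\rho^{N+1})$. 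Everything else is the chain rule and the smoothness of matrix inversion up to the boundary, which require care but no genuinely new idea.
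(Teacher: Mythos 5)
Your proof is correct, but it runs along a different route than the paper's. The paper first reduces to $\tPsi=\id$ and proves (i)$\Rightarrow$(iii)$\Rightarrow$(ii)$\Rightarrow$(i): the substantial step (iii)$\Rightarrow$(ii) is done without Jacobian matrices, by differentiating $u^i\o\Psi\sim_{N+1}u^i$ against pulled-back vector fields to get $\Psi^*\xi\sim_N\xi$ and $\Psi^*du^i\sim_N du^i$, and then treating a general tensor by hooking the pulled-back coordinate frame and coframe into $\Psi^*t$; the gain of one order happens in (ii)$\Rightarrow$(i), by writing $f\o\Psi=f+\rho^N\tilde f$, applying $d$, and reading off $\tilde f|_{\partial M}=0$ from the $d\rho$-coefficient. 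You run the cycle the other way, (i)$\Rightarrow$(ii)$\Rightarrow$(iii)$\Rightarrow$(i), and carry the main load in (i)$\Rightarrow$(ii) by explicit chart computations: Taylor expansion of components composed with $F$ versus $\tilde F$, the estimates $DF-D\tilde F=\Cal O(\rho^N)$ and $(DF)^{-1}-(D\tilde F)^{-1}=\Cal O(\rho^N)$, and Leibniz. Your order gain sits in (ii)$\Rightarrow$(iii) (the estimate $d(F^i-\tilde F^i)=\Cal O(\rho^N)$ from $t=du^i$, combined with $F^i=\tilde F^i$ on $\partial M$ and one integration in the normal direction), which is exactly the mechanism of the paper's (ii)$\Rightarrow$(i); and you need an additional Taylor-expansion step for (iii)$\Rightarrow$(i), which the paper avoids since its (i)$\Rightarrow$(iii) is the trivial direction. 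Both routes work; the paper's is lighter on chart bookkeeping (no inverse Jacobians appear), while yours is more elementary in the sense that every step is an explicit chain-rule or Taylor computation.

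A few small points to tidy, none of which is a gap. First, $\Psi^*u^n$ and $\tPsi^*u^n$ are not equal, only asymptotic of order $N+1$, so fix one of them (or any local defining function) as $\rho$; this is harmless because the classes $\Cal O(\rho^k)$ do not depend on that choice. Second, the remark that vanishing of $\Psi^*u^i-\tPsi^*u^i$ along $\partial M$ ``already gains one order'' is not an argument by itself (it only yields $\Cal O(\rho)$); your actual proof of (ii)$\Rightarrow$(iii) via $t=du^i$ and integration does not use it, so it should simply be dropped. Third, to apply (i) or (ii) to $u^i$ and $du^i$ you should extend the coordinate functions to globally defined functions agreeing with $u^i$ near the relevant point, as the paper does explicitly, and note that verifying the relations $\sim_N$ locally near each boundary point suffices, since these conditions only constrain behavior along $\partial M$.
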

\begin{proof}
  Replacing $\Psi$ by $\tPsi^{-1}\o\Psi$ we may without loss of generality assume
  that $\tPsi=\id_{\barm}$, which we do throughout the proof. 

  (i)$\Rightarrow$(iii): We first claim that $\Psi|_{\partial M}=\id_{\partial
    M}$. For $x\in\partial M$, take an open neighborhood $W$ of $x$ in $\partial
  M$. Then there is a bump function $f\in C^\infty(\barm,\Bbb R)$ with values in
  $[0,1]$ such that $f(x)=1$ and such that $f^{-1}(\{1\})\cap\partial M\subset W$. By
  assumption $f\o\Psi\sim_{N+1}f$, so in particular, these functions have to agree on
  $\partial M$ and hence at $x$. Since $\Psi(x)\in \partial M$, by construction, we
  get $\Psi(x)\in W$. Since $W$ was arbitrary, this implies that $\Psi(x)=x$ and
  hence the claim. Having this at hand, we take any chart $(U,u)$ with $x\in U$,
  extend the coordinate functions $u^i$ to globally defined functions on $M$ without
  changing them locally around $x$, and then (i) immediately implies that
  $\Psi^*u^i\sim_{N+1}u^i$ locally around $x$.

  \smallskip

  (iii)$\Rightarrow$(ii): For any tensor field $t$, it suffices to
  verify $\Psi^*t\sim_N t$ locally around each boundary point
  $x\in\partial M$. Fixing $x$, we take a chart $(U,u)$ as in (iii)
  and its coordinate functions $u^i$ and we work on
  $V=\Psi^{-1}(U)\cap U$. Taking a vector field $\xi\in\frak X(U)$ we
  can compare $\Psi^*\xi$ and $\xi$ on $V$. We can do this via
  coordinate expressions with respect to the chart $(U,u)$ and we
  denote by $\xi^i$ and $(\Psi^*\xi)^i$ the component functions. By
  assumption, $u^i\o\Psi=u^i+\Cal O(\rho^{N+1})$ and differentiating
  this with $\Psi^*\xi$, we obtain $(\Ps^*\xi) (u^i\o\Psi)=(\Ps^*\xi)(u^i)+\Cal
  O(\rho^N)$. Thus we conclude that
  $(\Psi^*\xi)(u^i\o\Psi)\sim_N(\Psi^*\xi)^i$. But by definition of
  the pull back, we get
  $(\Psi^*\xi)(u^i\o\Psi)=\xi(u^i)\o\Psi\sim_{N+1}\xi^i$. Overall, we
  conclude that $(\Psi^*\xi)^i\sim_N\xi^i$ on $V$, which implies that
  $\Psi^*\xi\sim_N\xi$ on $V$ and hence we get condition (ii) for
  vector fields.

  In particular, this implies that the coordinate vector fields $\partial_i$ of the
  chart $(U,u)$ satisfy $\Psi^*\partial_i\sim_N\partial_i$ on $V$. On the other hand,
  applying the exterior derivative to $u^i\o\Psi\sim_{N+1}u^i$, we conclude that
  $\Psi^*du^i=d(u^i\o\Psi)\sim_Ndu^i$. Of course, on $V$ the $du^i$ coincide with the
  coordinate one-forms of the chart $(U,u)$. Now given a tensor field $t$ of any
  type, we can take $\Psi^*t$ and hook in vector fields $\Psi^*\partial_{i_a}$ and
  one-forms $\Psi^*du^{j_b}$. On $V$ this by construction produces one of the
  component functions of $t$ up to $\Cal O(\rho^N)$. On the other hand, by definition
  of the pull back, this coincides with the composition of the corresponding coordinate
  function of $t$ with $\Psi$, and hence with that coordinate function up to $\Cal
  O(\rho^{N+1})$, so (ii) is satisfied in general.

  \smallskip

  (ii)$\Rightarrow$(i): Let $f\in C^\infty(\barm,\Bbb R)$ be a smooth function. Then
  by assumption we know that $f\o\Psi\sim_Nf$, so choosing a local defining function
  $\rho$ for $\partial M$, we get $f\o\Psi=f+\rho^N\tilde f$ for some smooth function
  $\tilde f\in C^\infty(\barm,\Bbb R)$. But then
  $\Psi^*df=d(f\o\Psi)=df+N\rho^{N-1}\tilde fd\rho+\Cal O(\rho^N)$. However,
  condition (ii) also says that $\Psi^*df\sim_Ndf$ and since $d\rho|_{\partial M}$ is
  nowhere vanishing, this implies that $\tilde f|_{\partial M}=0$. But this implies
  that $f\o\Psi\sim_{N+1}f$ and hence condition (i) follows. 
  \end{proof}

\subsection{The relation to adapted defining functions}\label{3.5}
In a special case and in quite different language, it has been observed in \cite{CDG}
that there is a close relation between diffeomorphisms asymptotic to the identity and
adapted defining functions. We start discussing this with the following lemma.

\begin{lemma}\label{lem3.5}
In our usual setting, of $\barm=M\cup\partial M$, let $\Cal G$ be an
equivalence class of ALH metrics on $M$ for the relation $\sim_{N-2}$
for some $N\geq 3$. Take two metrics $g,h\in\Cal G$, and let $\rho$
and $r$ be local defining functions for $\partial M$ defined on the
same open subset $U\subset\barm$. If $\rho$ is adapted to $g$ and $r$
is adapted to $h$ in the sense of Definition \ref{def2.2} and if
$\rho^2g_{ij}$ and $r^2h_{ij}$ induce the same metric on the boundary,
then $\rho\sim_{N+1}r$.
\end{lemma}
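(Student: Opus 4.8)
The plan is to write $r=e^{\omega}\rho$ for a function $\omega$ that is smooth up to the boundary, to combine the two adaptedness conditions with $g\sim_{N-2}h$ into a single first-order equation for $\omega$, and then to bootstrap the order of vanishing of $\omega$ along $\partial M$ from $1$ up to $N$. Since the conclusion $\rho\sim_{N+1}r$ concerns only the germ along $\partial M$, we are free to shrink $U$, so we may assume that $(\rho^2g)^{-1}(d\rho,d\rho)$ and $(r^2h)^{-1}(dr,dr)$ are identically $1$ on all of $U$ and that the collar coordinates introduced below are defined on $U$. First I would observe that, since $\rho$ and $r$ are both defining functions on $U$, there is a positive smooth $e^{\omega}$ with $r=e^{\omega}\rho$; fixing $\rho$, the relation $g\sim_{N-2}h$ gives (by \eqref{h-g}) $h=g+\rho^{N-2}\mu$ with $\mu=\mu_{ij}$ a symmetric $2$-tensor that is smooth up to the boundary, whence $r^2h=e^{2\omega}\bigl(\overline g+\rho^{N}\mu\bigr)$ with $\overline g:=\rho^2g$. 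Restricting to $\partial M$, where $\rho^{N}\mu$ vanishes (as $N\geq 3$), and using the hypothesis that $r^2h$ and $\overline g$ induce the same boundary metric, nondegeneracy of $\overline g|_{\partial M}$ forces $e^{2\omega}|_{\partial M}=1$, i.e.\ $\omega=\Cal O(\rho)$.

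Next I would extract the equation for $\omega$. From $r=e^{\omega}\rho$ we get $dr=e^{\omega}(d\rho+\rho\,d\omega)$, and from $r^2h=e^{2\omega}(\overline g+\rho^{N}\mu)$ we get $(r^2h)^{-1}=e^{-2\omega}\bigl(\overline g^{-1}+\rho^{N}A\bigr)$ for a symmetric $2$-tensor $A$ that is smooth up to the boundary and depends only on $\overline g,\mu,\rho,N$. Hence $(r^2h)^{-1}(dr,dr)=\bigl(\overline g^{-1}+\rho^{N}A\bigr)(d\rho+\rho\,d\omega,\,d\rho+\rho\,d\omega)$. Since this equals $1$ and $\overline g^{-1}(d\rho,d\rho)$ equals $1$ (both near $\partial M$), expanding the right-hand side and cancelling the leading $1$'s gives
\[
2\rho\,\overline g^{-1}(d\rho,d\omega)+\rho^2\,\overline g^{-1}(d\omega,d\omega)=\Cal O(\rho^{N}).
\]
Writing $V:=(d\rho)^{\sharp}$ for the $\overline g$-gradient of $\rho$, we have $\overline g^{-1}(d\rho,\,\cdot\,)=(\,\cdot\,)(V)$, so $V(\rho)=\overline g^{-1}(d\rho,d\rho)\equiv 1$ near $\partial M$ and $\overline g^{-1}(d\rho,d\omega)=V(\omega)$; dividing the displayed identity by $2\rho$ yields
\[
V(\omega)=-\tfrac{\rho}{2}\,\overline g^{-1}(d\omega,d\omega)+\Cal O(\rho^{N-1}).
\]
Because $V(\rho)\equiv 1$ near $\partial M$, $V$ is transverse to $\partial M$, and flowing out of $\partial M$ along $V$ gives coordinates $(x,t)$ on a one-sided collar of $\partial M$ in which $\rho=t$, $V=\partial_t$, and (since $V^{\flat}=d\rho$ forces $\overline g_{tt}=1$ and $\overline g_{ta}=0$) $\overline g=dt^2+g_t$ for a smooth family of metrics $g_t$ on $\partial M$. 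In these coordinates $\overline g^{-1}(d\omega,d\omega)=(\partial_t\omega)^2+g_t^{-1}(d_x\omega,d_x\omega)$.

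Finally I would bootstrap. Assume $\omega=\Cal O(\rho^{j})$ with $1\leq j\leq N-1$; this holds for $j=1$ by the first step. Then $\partial_t\omega=\Cal O(\rho^{j-1})$ and $d_x\omega=\Cal O(\rho^{j})$, so $\overline g^{-1}(d\omega,d\omega)=\Cal O(\rho^{2j-2})$, and the equation above gives $V(\omega)=\partial_t\omega=\Cal O(\rho^{\min(2j-1,\,N-1)})$. Integrating in $t$ from $t=0$, where $\omega$ vanishes, yields $\omega=\Cal O(\rho^{\min(2j,\,N)})$; and $\min(2j,N)\geq j+1$ throughout the range $1\leq j\leq N-1$. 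Hence the order of vanishing strictly increases at each step until it reaches $N$, so after finitely many steps $\omega=\Cal O(\rho^{N})$. Then $e^{\omega}-1=\Cal O(\rho^{N})$, so $\rho-r=-\rho(e^{\omega}-1)=\Cal O(\rho^{N+1})$, i.e.\ $\rho\sim_{N+1}r$.

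The step that needs real care is this bootstrap: one must use $g\sim_{N-2}h$ to see that the inhomogeneous error in the equation for $\omega$ is only $\Cal O(\rho^{N-1})$ — and, crucially, that this error does not degrade as the estimate on $\omega$ is improved (it is $\rho^{N-1}$ times a function smooth up to the boundary which, for our fixed $\omega$, is $\Cal O(1)$) — and one must observe that the quadratic term $\tfrac{\rho}{2}\overline g^{-1}(d\omega,d\omega)$ always vanishes to strictly higher order than $\omega$ itself, which is exactly what makes the iteration close. Setting up the collar coordinates from the $\overline g$-gradient of $\rho$ and checking that $\overline g$ takes the product form $dt^2+g_t$ there is the other ingredient that should be spelled out, but that is standard.
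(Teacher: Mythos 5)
Your proposal is correct and follows essentially the same route as the paper's proof: write $r=e^{\omega}\rho$, combine the adaptedness of $r$ to $h$ with $\rho^{-2}h^{ij}=\rho^{-2}g^{ij}+\Cal O(\rho^{N})$ and the adaptedness of $\rho$ to $g$ to obtain a first-order relation for $\omega$ with error $\Cal O(\rho^{N})$, use the equal-boundary-metric hypothesis to get $\omega=\Cal O(\rho)$, and bootstrap up to $\Cal O(\rho^{N})$, whence $r=\rho+\Cal O(\rho^{N+1})$. The only difference is in how the induction step is executed: the paper substitutes $v=\rho^{\ell}\tilde v$ and reads off that $\tilde v$ vanishes on the boundary, gaining one order per step, while you integrate the equation along the flow of the $\overline{g}$-gradient of $\rho$ in a collar; both close for the same reason, namely that the quadratic term in $d\omega$ is of strictly higher order than $\omega$ itself.
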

\begin{proof}
We have to analyze the asymptotics of solutions to the PDE that governs the change to
an adapted defining function.  Replacing $\barm$ by an appropriate open subset, we
may assume that $\rho$ and $r$ are defined on all of $\barm$. Then we can write
$r=\rho e^v$ for some smooth function $v\in C^\infty(\barm,\Bbb R)$ which gives
$dr=rdv+e^vd\rho$. In abstract index notation, this reads as
$r_i=rv_i+e^v\rho_i$. The fact that $r$ is adapted to $h_{ij}$ says that
$r^{-2}h^{ij}r_ir_j$ is identically $1$ on a neighborhood of the boundary, and
inserting we conclude that
\begin{equation}\label{adap-PDE}
1\equiv \rho^{-2}h^{ij}\rho_i\rho_j+2\rho^{-1}h^{ij}\rho_iv_j+h^{ij}v_iv_j. 
\end{equation}
Observe that $\rho^{-2}h^{ij}$, $\rho_i$ and $v_i$ are all smooth up to the boundary,
so the terms in the right hand side are $\Cal O(1)$, $\Cal O(\rho)$, and $\Cal
O(\rho^2)$, respectively.

Now on the one hand, since $g,h\in\Cal G$, we know from \eqref{hinv} that
$\rho^{-2}h^{ij}=\rho^{-2}g^{ij}+\Cal O(\rho^N)$. Since $\rho$ is adapted to $g$,
this means that the first term in the right hand side of \eqref{adap-PDE} is $1+\Cal
O(\rho^N)$. Inserting into \eqref{adap-PDE}, we conclude that
\begin{equation}\label{adap-PDE2}
2\rho^{-1}h^{ij}\rho_iv_j+h^{ij}v_iv_j=\Cal O(\rho^N).
\end{equation}

On the other hand, $r^2h_{ij}=e^{2v}\rho^2h_{ij}$, and $r^2h_{ij}$, by assumption, is
smooth up to the boundary with the same boundary value as $\rho^2g_{ij}$.  Hence our
assumption on the induced metrics on the boundary imply that $e^{2v}|_{\partial
  M}=1$, so $v$ has to vanish identically along the boundary and hence $v=\Cal
O(\rho)$. Inductively, putting $v=\rho^\ell\tilde v$ for $\ell\geq 1$, we get
$v_i=\ell\rho^{\ell-1}\tilde v\rho_i+\Cal O(\rho^\ell)$, which implies that the left
hand side of \eqref{adap-PDE2} becomes $2\ell\rho^\ell\tilde
v\rho^{-2}h^{ij}\rho_i\rho_j+\Cal O(\rho^{\ell+1})$. As long as $\ell<N$, this shows
that $\tilde v= \Cal O (\rho)$, so we conclude that we can write $v=\rho^N\tilde v$
where $\tilde v$ is smooth up to the boundary. But then
$$r=\rho e^v=\rho(1+\rho^N\tilde v+\Cal O(\rho^{N+1}))=\rho+\Cal O(\rho^{N+1}),$$
which completes the proof.
\end{proof}

Using this, we can now establish several important properties of
diffeomorphisms that are asymptotic to the identity.

\begin{thm}\label{thm3.5}
  Let $\barm=M\cup\partial M$ be a smooth manifold with boundary and, for some $N\geq
  3$, let $\Cal G$ be an equivalence class of ALH metrics on $M$ for the relation
  $\sim_{N-2}$. Let us denote by $[\Cal G_\infty]$ the conformal structure on $\partial M$
  defined by the conformal infinity of $\Cal G$ and by $\Conf(\partial M,[\Cal G_\infty])$
  the group of its conformal isometries. Then we have

(1) $\Diff^{N+1}_0(\barm)$ is a normal subgroup in $\Diff(\barm)$ and
  is contained in $\Diff_{\Cal G}(\barm)$.

(2) Restriction of diffeomorphisms to the boundary induces a homomorphism
  $\Diff_{\Cal G}(\barm)\to\Conf(\partial M,[\Cal G_\infty])$ with kernel
  $\Diff^{N+1}_0(\barm)$.
\end{thm}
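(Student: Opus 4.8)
The plan is to prove (1) from the formal properties of the relation $\sim_{N+1}$ together with Lemma \ref{lem3.4}, and to reduce the non-trivial part of (2) to Lemma \ref{lem3.5}, with an additional estimate in collar coordinates to handle the tangential directions.

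For (1): by the remarks following Definition \ref{def3.4}, $\sim_{N+1}$ is compatible with composition on either side and $\Psi\sim_{N+1}\id$ is equivalent to $\Psi^{-1}\sim_{N+1}\id$; hence $\Diff^{N+1}_0(\barm)$ is closed under composition and inversion, and for $\Psi\in\Diff^{N+1}_0(\barm)$ and arbitrary $\Phi\in\Diff(\barm)$ one composes $\Psi\sim_{N+1}\id$ on the left with $\Phi$ and on the right with $\Phi^{-1}$ to get $\Phi\o\Psi\o\Phi^{-1}\sim_{N+1}\id$, so the subgroup is normal. For the inclusion $\Diff^{N+1}_0(\barm)\subset\Diff_{\Cal G}(\barm)$, fix $g\in\Cal G$ and a local defining function $\rho$, put $\bar g:=\rho^2g$, and observe that it suffices to show $\rho^2\Psi^*g\sim_N\bar g$. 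I would write $\rho^2\Psi^*g=\big(\rho/(\rho\o\Psi)\big)^2\,\Psi^*\bar g$; here $\Psi^*\bar g\sim_N\bar g$ by Lemma \ref{lem3.4}, while $\rho\o\Psi\sim_{N+1}\rho$ makes $\rho/(\rho\o\Psi)$ smooth up to the boundary and $\sim_N 1$, so multiplying gives $\rho^2\Psi^*g\sim_N\bar g$; the same expansion shows that the boundary value is a diffeomorphic pullback of $\bar g|_{\partial M}$, hence non-degenerate, so $\Psi^*g$ is conformally compact and thus lies in $\Cal G$.

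For (2): the map $\Psi\mapsto\Psi|_{\partial M}$ is manifestly a group homomorphism, and it has image in $\Conf(\partial M,[\Cal G])$ by the discussion in Section \ref{3.3}; that $\Diff^{N+1}_0(\barm)$ lies in its kernel follows from part (1) and Lemma \ref{lem3.4} (which in particular forces $\Psi|_{\partial M}=\id_{\partial M}$ when $\Psi\sim_{N+1}\id$). The real content is the reverse inclusion, so suppose $\Psi\in\Diff_{\Cal G}(\barm)$ with $\Psi|_{\partial M}=\id_{\partial M}$ and set $h:=\Psi^*g\in\Cal G$; the goal is $\Psi\sim_{N+1}\id$. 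Fixing a representative $\gamma$ in $[\Cal G]$, use Proposition \ref{prop2.2} to pick an adapted defining function $\rho$ for $g$ with $\rho^2g$ inducing $\gamma$ on $\partial M$, and put $r:=\rho\o\Psi$. Then $r^2h=\Psi^*(\rho^2g)$, so $(r^2h)^{-1}(dr,dr)=\Psi^*\!\big((\rho^2g)^{-1}(d\rho,d\rho)\big)$, which is identically $1$ near $\partial M$ since $\rho$ is adapted to $g$; hence $r$ is adapted to $h$, and $r^2h$ induces $(\Psi|_{\partial M})^*\gamma=\gamma$ on $\partial M$. Lemma \ref{lem3.5} then gives $\rho\sim_{N+1}r$, i.e.\ $\rho\o\Psi\sim_{N+1}\rho$, which is the ``normal-direction'' part of $\Psi\sim_{N+1}\id$.

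The remaining, and hardest, step is the tangential direction. I would pass to collar coordinates $(t,y)$ near a boundary point determined by $\rho$, in which $t=\rho$ and $\bar g=\rho^2g$ takes the normal form $dt^2+g_t$ with $g_0=\gamma$, and write $\Psi$ there as $(t,y)\mapsto(T(t,y),Y(t,y))$; the previous step gives $T\sim_{N+1}t$, and $\Psi|_{\partial M}=\id$ gives $Y(0,y)=y$. As in part (1), $\rho\o\Psi\sim_{N+1}\rho$ together with $\Psi^*g\sim_{N-2}g$ implies $\Psi^*\bar g\sim_N\bar g$. Expanding $\Psi^*\bar g=dT^2+\Psi^*g_t$ and comparing the mixed $dt\,dy^\beta$-components with those of $\bar g$ (which vanish), an induction on the order of vanishing of $Y^\alpha-y^\alpha$ along $\partial M$, using non-degeneracy of $g_0=\gamma$ at each step, upgrades $Y^\alpha-y^\alpha=\Cal O(\rho^\ell)$ to $\Cal O(\rho^{\ell+1})$ until $\ell=N+1$; hence $\Psi^*y^\alpha=Y^\alpha\sim_{N+1}y^\alpha$ and $\Psi^*t=T\sim_{N+1}t$ in this chart, so Lemma \ref{lem3.4} yields $\Psi\sim_{N+1}\id$ and completes the proof. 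The main obstacle is exactly this bootstrap: extracting the tangential estimate $Y^\alpha-y^\alpha=\Cal O(\rho^{N+1})$ from the single tensor condition $\Psi^*\bar g\sim_N\bar g$ requires choosing the right components of $\Psi^*\bar g$ to compare and feeding in the non-degeneracy of the boundary metric, whereas the normal-direction estimate is handed to us cleanly by Lemma \ref{lem3.5}.
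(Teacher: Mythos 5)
Your proof is correct, and its overall architecture matches the paper's: part (1) is handled exactly as in the paper (formal properties of $\sim_{N+1}$ plus Lemma \ref{lem3.4} applied to $\rho^2g$), and for the kernel statement in (2) you make the same key move, namely setting $r:=\rho\o\Psi$ for an adapted $\rho$, observing by naturality that $r$ is adapted to $\Psi^*g$ with the same induced boundary metric, and invoking Lemma \ref{lem3.5} to get $\rho\o\Psi\sim_{N+1}\rho$. Where you genuinely diverge is the tangential step. The paper compares the normal vector fields $\xi^i=\rho^{-2}g^{ij}\rho_j$ and $\eta=\Psi^*\xi$, deduces $\xi\sim_N\eta$, builds a collar from the flow of $\eta$, and runs an induction on functions annihilated by $\xi$, using $\ph^*\xi=\partial_t+t^N\tilde\xi$. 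You instead put $\overline{g}=\rho^2g$ into the Graham normal form $dt^2+g_t$ (with $t=\rho$, which is the standard consequence of $\rho$ being adapted and is essentially the same collar, built from the flow of $\xi$ itself), note that $\rho\o\Psi\sim_{N+1}\rho$ upgrades $\Psi^*g\sim_{N-2}g$ to $\Psi^*\overline{g}\sim_N\overline{g}$, and then bootstrap the order of vanishing of $Y^\al-y^\al$ by reading off the mixed $dt\,dy^\be$-components, where the leading term $\ell t^{\ell-1}g_{\al\be}(T,Y)b^\al$ forces $b^\al|_{t=0}=0$ by non-degeneracy of $\ga$ as long as $\ell\le N$; I checked this computation and it closes correctly at $\ell=N+1$, after which Lemma \ref{lem3.4}(iii) applies. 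So the mechanisms differ: the paper's induction exploits the asymptotics of the vector fields ($\xi\sim_N\eta$) and works at the level of flows, while yours exploits the asymptotics of the compactified metrics directly and trades the flow comparison for linear algebra with the boundary metric; your route requires quoting the normal-form lemma for adapted defining functions (which the paper only uses implicitly through its collar construction), but in exchange the tangential estimate drops out of a one-line comparison of metric components. Both are complete; just make sure to state explicitly that the normal form holds on a neighbourhood where $(\rho^2g)^{-1}(d\rho,d\rho)\equiv 1$, i.e.\ where $\rho$ is actually adapted.
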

\begin{proof}
(1) The observations on the relation $\sim_N$ for diffeomorphisms we have made after
  Definition \ref{def3.4} readily imply that $\Diff^{N+1}_0(\barm)$ is stable under
  inversions as well as compositions, and conjugations by arbitrary elements of
  $\Diff(\barm)$. Hence $\Diff^{N+1}_0(\barm)$ is a normal subgroup of
  $\Diff(\barm)$. Taking $g\in\Cal G$ and a local defining function $\rho$ for
  $\partial M$, we know that $\rho^2g_{ij}$ admits a smooth extension to the
  boundary. Thus, given $\Psi\in \Diff^{N+1}_0(\barm)$, we may apply part (ii) of
  Lemma \ref{lem3.4} to conclude that $\Psi^*(\rho^2g_{ij})\sim_N\rho^2g_{ij}$. Now
  $\Psi^*(\rho^2)=(\rho\o\Psi)^2$ and $\rho\o\Psi\sim_{N+1}\rho$. Hence
  $\rho^2\Psi^*g_{ij}\sim_N \Psi^*(\rho^2g_{ij})\sim_N\rho^2g_{ij}$ and restricting
  to $M$ we conclude that $\Psi^*g_{ij}\in\Cal G$. This completes the proof of (1).

  \smallskip
  
(2) It follows readily from the definitions that, for
  $\Psi\in\Diff_{\Cal G}(\barm)$, the diffeomorphism $\Psi|_{\partial
    M}$ of $\partial M$ preserves the conformal structure $[\Cal
    G_\infty]$. Thus we get a homomorphism $\Diff_{\Cal
    G}(\barm)\to\Conf(\partial M,[\Cal G_\infty])$ as claimed and it remains
  to prove the claim about the kernel. So let us take a diffeomorphism
  $\Psi\in\Diff_{\Cal G}(\barm)$ such that $\Psi|_{\partial
    M}=\id_{\partial M}$ and we want to show that
  $\Psi\sim_{N+1}\id$. To prove this, we can apply condition (iii) of
  Lemma \ref{lem3.4} and work locally around a boundary point $x$. Let
  us choose $g\in\Cal G$ and a local defining function $\rho$ for
  $\partial M$ which is adapted to $g$ and defined on some open
  neighborhood $U$ of $x$ in $\barm$. Now we consider the normal field
  (to $\rho $ level sets) determined by $g$ and $\rho$, i.e.~we put
  $\xi^i:=\rho^{-2}g^{ij}\rho_j$. This admits a smooth
  extension to the boundary,  and the fact that $\rho$ is adapted to $g$
  exactly says that $\rho^2g_{ij}\xi^i\xi^j$ is identically one on a
  neighborhood of the boundary.

  For $x\in\partial M$, we now work on an open neighborhood $W$ of $x$
  in $\barm$ such that $W\subset\Psi^{-1}(U)\cap U$. On $W$, we can
  pull back all our data by $\Psi$, thus obtaining
  $h_{ij}:=\Psi^*g_{ij}$, $r:=\rho\o\Psi$, and $\eta:=\Psi^*\xi$. By
  assumption, $h_{ij}\in\Cal G$ and pulling back the defining equation
  for $\xi^i$ we get $\eta^i=r^{-2}h^{ij}r_j$. Also by pulling back,
  we readily see that the $r^2h_{ij}\eta^i\eta^j$ is identically one
  on a neighborhood of the boundary. Hence we conclude that the local
  defining function $r$ is adapted to $h_{ij}$ and hence Lemma
  \ref{lem3.5} shows that $r\sim_{N+1}\rho$. Observe that this implies
  that $r_j\sim_N\rho_j$ and together with $g^{ij}\sim_{N+2}h^{ij}$
  the defining equations for $\xi$ and $\eta$ show that
  $\xi\sim_N\eta$.

  Next, we pass to an appropriate collar of the boundary. We can
  choose an open neighborhood $V$ of $x$ in $\partial M$ and a
  positive number $\epsilon$ such that the flow map
  $(y,t)\mapsto\Fl^\eta_t(y)$ defines a diffeomorphism $\ph$ from $V\x
  [0,\epsilon)$ onto an open subset contained in $\Psi^{-1}(W)\cap W$,
    and on which $\rho$ satisfies $(\rho^2 g)^{-1}(d\rho,d\rho)=1$.
    Now let us define $\partial_t:=\ph^*\eta$. Note that this is the
    coordinate vector field for any product chart on $V\x
    [0,\epsilon)$ induced by some chart on $V$. Since
      $\eta=\Psi^*\xi$, the fact that $\Psi$-related vector fields
      have $\Psi$-related flows together with $\Psi|_{\partial M}=\id$
      readily implies that $(\Psi\o\ph)(y,t)=\Fl^\xi_t(y)$. Using
      Section \ref{3.4}, and in particular Lemma \ref{lem3.4}, we see
      that we can complete the proof by showing that
      $\Psi\o\ph\sim_{N+1}\ph$ as follows.

  By Lemma \ref{lem3.4} it suffices to consider the pull backs of
  coordinate functions of local charts along these diffeomorphisms. We
  apply this to charts which are obtained by composing a product chart
  for $V\x [0,\epsilon)$ with $(\Psi\o\ph)^{-1}$. Now the fact that
    $d\rho(\xi)\equiv 1$ (and that $\Psi\o\ph$ maps $V\times \{0\}$
    into $\partial M$) shows that applying this construction to the
    coordinate $t$ on $[0,\epsilon)$, we obtain $\rho$. As observed
      above, $\rho\o\Psi\sim_{N+1}\rho$ and thus
      $\rho\o\Psi\o\ph\sim_{N+1}\rho\o\ph$. Thus it remains to
      consider functions $f$ such that $f\o(\Psi\o\ph)$ is one of the
      boundary coordinate functions, and hence $\partial_t\cdot (f\o
      \Ps\o\ph)\equiv 0$ or, equivalently, $\xi\cdot f\equiv 0$ on an
      appropriate neighborhood of the boundary. Note then that  $\ph^*\xi\cdot
      (f\o\phi)\equiv 0$. We have to compare
      $f\o(\Psi\o\ph)$ to $f\o\ph$.

      As we have observed above, we get $\eta\sim_N\xi$
            and hence
        $\ph^*\xi\sim_N\ph^*\eta=\partial_t$. Thus we get
        $\ph^*\xi=\partial_t+t^N\tilde\xi$ for some vector field
        $\tilde\xi\in\frak(V\x [0,\epsilon))$. By construction $f\o\ph$ and
          $f\o(\Ps\o\ph)$ agree on $V\x\{0\}$, so
          $f\o\ph\sim_1f\o(\Ps\o\ph)$. Assuming that $f\o\ph\sim_kf\o(\Ps\o\ph)$ for
          some $k\geq 1$, we get $f\o\ph=f\o(\Ps\o\ph)+t^k\tilde f$ for some $\tilde
          f\in C^\infty(V\x[0,\ep),\Bbb R)$. Then we compute
$$
0=(\partial_t+t^N\tilde\xi)\cdot (f\o(\Ps\o\ph)+t^k\tilde
f)=0+kt^{k-1}\tilde f+\Cal O(t^{\text{min}(k,N)}) .
  $$

  If $k\leq N$, then this equation shows that $\tilde f$ vanishes along $V\x\{0\}$
and hence $(f\o\ph)\sim_{k+1}f\o(\Ps\o\ph)$. Inductively, this gives
$(f\o\ph)\sim_{N+1}f\o(\Ps\o\ph)$, which completes the proof.
\end{proof}

\subsection{Aligned metrics}\label{3.6}
Following an idea in \cite{CDG} we next show that the freedom under diffeomorphisms
asymptotic to the identity can be absorbed in a geometric relation between the
metrics. The analogous condition in \cite{CDG} is phrased as ``transversality''. 

\begin{definition}\label{def3.6}
Let $\barm=M\cup\partial M$ be a manifold with boundary and let $\Cal G$ be an
equivalence class of ALH metrics for the relation $\sim_{N-2}$ for some $N\geq
3$. Consider two metrics $g,h\in\Cal G$ and a local defining function $\rho$ for
$\partial M$ defined on some open subset $U\subset\barm$. Then we say that $h$
\textit{is aligned with $g$ with respect to $\rho$} if
$$
\rho_ig^{ij}(h_{jk}-g_{jk})\equiv 0
$$
on some open neighborhood of $U\cap\partial M$ in $U$. 
\end{definition}

Observe that the condition in Definition \ref{def3.6} can be rewritten as
$\rho_ig^{ij}h_{jk}=\rho_k$. This in turn implies that $\rho_ih^{ij}=\rho_ig^{ij}$
and hence the gradients of $\rho$ with respect to the two metrics coincide
on a neighborhood of the boundary. This shows that in the case that $\rho$ is adapted
to $g_{ij}$ and $h_{ij}$ is aligned to $g_{ij}$ with respect to $\rho$, then $\rho$
is also adapted to $h_{ij}$.

\begin{thm}\label{thm3.6}
In our usual setting, of $\barm=M\cup\partial M$ and a class $\Cal G$
of metrics, assume that $g\in\Cal G$ and $\rho$ is a local defining
function for $\partial M$ that is adapted to $g$. Then for any
$h\in\Cal G$ and locally around any point $x\in\partial M$ in the
domain of definition of $\rho$, there exists a diffeomorphism $\Psi$
such that $\Psi\sim_{N+1}\id$ and such that $\Psi^*h$ is aligned to
$g$ with respect to $\rho$. Moreover, the germ of $\Psi$ along the
intersection of its domain of definition with $\partial M$ is uniquely
determined by this condition.
\end{thm}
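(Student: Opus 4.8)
The plan is to set up a first-order ODE (along the normal direction to the $\rho$-level sets) whose flow produces $\Psi$, exactly mirroring the PDE argument used for adapted defining functions in Proposition \ref{prop2.2} and Lemma \ref{lem3.5}. Fix $g$, $\rho$ adapted to $g$, and $h\in\Cal G$. First I would introduce the $g$-normal field $\xi^i:=\rho^{-2}g^{ij}\rho_j$, which is smooth up to the boundary and satisfies $\rho^2 g_{ij}\xi^i\xi^j\equiv 1$ near $\partial M$ since $\rho$ is adapted. Analogously, let $\eta^i:=\rho^{-2}h^{ij}\rho_j$ be the $h$-normal field for the \emph{same} defining function $\rho$. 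From \eqref{hinv} we have $h^{ij}=g^{ij}+\Cal O(\rho^{N+2})$, so $\eta^i-\xi^i$ is $\Cal O(\rho^N)$; in particular $\eta$ is also smooth up to the boundary and $\eta|_{\partial M}=\xi|_{\partial M}$, the conformal unit normal. The diffeomorphism $\Psi$ will be built as the time-one (or rather a suitable collar-adapted) comparison of the flows of $\xi$ and $\eta$, so that $\Psi$ drags $h$ along $\eta$-flow lines until its normal components in the $\rho$-direction match those of $g$.

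Concretely, I would work in a collar: choose a neighbourhood $V$ of $x$ in $\partial M$ and $\epsilon>0$ so that $(y,t)\mapsto\Fl^\xi_t(y)$ gives a diffeomorphism $\ph:V\x[0,\epsilon)\to W$ onto an open set on which $\rho$ is adapted, and so that $\rho\o\ph$ is the coordinate $t$ (this is possible since $d\rho(\xi)\equiv 1$). In these coordinates $\xi=\partial_t$, and the alignment condition $\rho_i g^{ij}(h_{jk}-g_{jk})\equiv 0$ becomes the statement that the ``$t$-row'' of $\ph^*h$ agrees with that of $\ph^*g$, i.e.\ $(\ph^*h)(\partial_t,\cdot)=(\ph^*g)(\partial_t,\cdot)=dt$ along $T\partial M\oplus\langle\partial_t\rangle$. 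I would then seek $\Psi$ of the form $\ph\o F\o\ph^{-1}$ where $F(y,t)=(\beta_t(y),\alpha(y,t))$ solves an ODE in $t$: the equation forcing alignment of $F^*(\ph^*h)$ becomes a non-characteristic first-order ODE (in the $t$-variable, with $y$ as parameter) for the pair $(\alpha,\beta)$, with initial condition $F|_{t=0}=\id_{V}$. Since $h^{ij}-g^{ij}=\Cal O(\rho^{N+2})=\Cal O(t^{N+2})$ in the collar, the right-hand side of this ODE is $\Cal O(t^{N})$ (after the one derivative lost in writing the flow equation), so by the standard Picard iteration / power-series matching argument — exactly as in the proof of Lemma \ref{lem3.5}, where one shows inductively that $v=\Cal O(\rho^\ell)$ for increasing $\ell$ forces $v=\Cal O(\rho^{N+1})$ — the solution satisfies $\alpha(y,t)=t+\Cal O(t^{N+1})$ and $\beta_t(y)=y+\Cal O(t^{N+1})$. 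By Lemma \ref{lem3.4}(iii) this is precisely the statement $\Psi\sim_{N+1}\id$, and by Theorem \ref{thm3.5}(1) such $\Psi$ automatically lies in $\Diff_{\Cal G}(\barm)$, so $\Psi^*h\in\Cal G$ and the alignment computation makes sense.

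For uniqueness of the germ: if $\Psi$ and $\Psi'$ both do the job, then $\Phi:=\Psi'\o\Psi^{-1}$ satisfies $\Phi\sim_{N+1}\id$ (hence preserves $\rho$ up to order $N+1$ by Lemma \ref{lem3.4}) and $\Phi^*(\Psi^*h)$ is aligned to $g$ with respect to $\rho$ while $\Psi^*h$ already is; subtracting the two alignment conditions and using that $\rho$ is adapted to both $g$ and $\Psi^*h$ (by the remark after Definition \ref{def3.6}), one gets that $\Phi$ must preserve the pair $(g\text{-data along }\rho,\ \rho)$ to all relevant orders, which pins down its germ along $\partial M$ — again by a Picard-uniqueness argument for the same ODE. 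The main obstacle I anticipate is organizing the collar ODE so that the ``aligned'' condition is genuinely \emph{non-characteristic} in $t$ and the order-counting is clean: one must be careful that writing $\Psi$ as a $t$-flow costs exactly one order of $\rho$ (so that the $\Cal O(\rho^{N+2})$ gap in $h^{ij}-g^{ij}$ yields $\Psi\sim_{N+1}\id$ and not merely $\Psi\sim_N\id$), and that the $\partial M$-component $\beta_t$ of $F$, which is forced along for the ride, does not degrade the order. Everything else is the by-now-familiar adapted-defining-function machinery of \cite{CDG} and Lemma \ref{lem3.5}.
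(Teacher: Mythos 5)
Your overall strategy (compare the normal flows of $g$ and $h$ in a collar) is the right one, but the way you set it up has a genuine gap. You define the $h$-normal field with respect to the \emph{same} defining function $\rho$, namely $\eta^i=\rho^{-2}h^{ij}\rho_j$, and then claim that forcing alignment of $F^*(\ph^*h)$ is a non-characteristic ODE in $t$ with $y$ as a parameter. Neither version works as stated. Alignment in Definition \ref{def3.6} is an \emph{exact} identity on a neighbourhood of the boundary, not an asymptotic one. If $\Psi$ intertwines the flows of $\xi$ and of your $\eta$, then $\Psi^*\eta=\xi$ and $i_\xi\big((\rho\o\Psi)^2\Psi^*h\big)=\Psi^*(i_\eta\rho^2h)=\Psi^*(d\rho)=d(\rho\o\Psi)$, so $i_\xi(\rho^2\Psi^*h)=\tfrac{\rho^2}{(\rho\o\Psi)^2}\,d(\rho\o\Psi)$; this equals $d\rho$ only if $\rho\o\Psi=\rho$ near the boundary. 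But along your $\eta$-flow one has $d\rho(\eta)=\rho^{-2}h^{ij}\rho_i\rho_j=1-\rho^N\xi^k\mu_{k\ell}\xi^\ell+\Cal O(\rho^{N+1})$, which is not identically $1$, so $\rho\o\Psi\neq\rho$ and you only get alignment up to $\Cal O(\rho^N)$ errors -- not the exact condition required. If instead you try to impose alignment directly as an equation on $F(y,t)=(\beta_t(y),\alpha(y,t))$, the cross-term conditions $t^2F^*(\ph^*h)(\partial_t,\partial_{y^a})=0$ involve $\partial_y\alpha$ and $\partial_y\beta$, so this is a first-order PDE \emph{system} for the map $F$, not an ODE with $y$ as a parameter, and Picard iteration does not apply. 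The missing ingredient is the adapted defining function of $h$ itself: by Proposition \ref{prop2.2} choose $r$ adapted to $h$ with $r^2h$ and $\rho^2g$ inducing the same boundary metric, set $\eta^i:=r^{-2}h^{ij}r_j$, and define $\Psi$ by $\Psi\o\Fl^\xi_t=\Fl^\eta_t$. Then $\Psi^*r=\rho$ and $\Psi^*\eta=\xi$ hold exactly, whence $i_\xi(\rho^2\Psi^*h)=\Psi^*(i_\eta r^2h)=\Psi^*(dr)=d\rho$ exactly; and Lemma \ref{lem3.5} (which gives $r\sim_{N+1}\rho$, hence $\eta\sim_N\xi$) is what makes the order count $\Psi\sim_{N+1}\id$ come out right -- in your write-up Lemma \ref{lem3.5} is invoked only by analogy, not used for its actual content.

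Your uniqueness argument is also insufficient as sketched: knowing that $\Phi$ preserves the relevant data ``to all relevant orders'' does not determine its germ, since a diffeomorphism can agree with the identity to arbitrarily high (even infinite) order along $\partial M$ without equalling the identity on a neighbourhood. One needs exact statements. If both $h$ and $\Psi^*h$ are aligned to $g$ with respect to $\rho$, then $\rho$ is adapted to both; since $\rho\o\Psi\sim_{N+1}\rho$, the pullback $\Psi^*\rho$ and $\rho$ induce the same boundary metric for $\Psi^*h$, so the uniqueness clause of Proposition \ref{prop2.2} forces $\rho\o\Psi=\rho$ \emph{exactly} near the boundary. Then $\zeta^j:=\rho_ih^{ij}=\rho_ig^{ij}$ satisfies $\Psi^*\zeta=\zeta$ exactly, and since $\Psi|_{\partial M}=\id$, invariance of the flow of $\zeta$ gives $\Psi=\id$ on a neighbourhood of $\partial M$. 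That exact chain of identities, rather than a Picard-uniqueness heuristic, is what pins down the germ.
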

\begin{proof}
Since $\rho$ is adapted to $g$ the function
$\rho^{-2}g^{ij}\rho_i\rho_j$ is identically one on some neighborhood
of the boundary. Now given $h_{ij}\in\Cal G$, we can use Proposition
\ref{prop2.2} to modify $\rho$ to a local defining function $r$
adapted to $h_{ij}$ in such a way that $\rho^2g_{ij}$ and $r^2h_{ij}$
induce the same metric on the boundary, compare also to the proof of
Lemma \ref{lem3.5}. Now we define vector fields $\xi=\xi^i$ and
$\eta=\eta^i$ by $\xi^i:=\rho^{-2}g^{ij}\rho_j$ and
$\eta^i:=r^{-2}h^{ij}r_j$ where as usual we write $\rho_j$ for $d\rho$
and similarly for $r$. Recall from the proof of Theorem \ref{thm3.5} that this implies
that $\xi\sim_N\eta$.

% From Lemma \ref{lem3.5}, we know that
% $r\sim_{N+1}\rho$. This shows that $dr\sim_Nd\rho$ and together with
% formula \eqref{hinv}, this shows that $\xi\sim_N\eta$.

Now, via the construction of collars from the proof of Theorem \ref{thm3.5}, we
obtain a diffeomorphism $\Psi$ which has the property that for any $y$ in an
appropriate neighborhood of $x$ in $\partial M$, we get
$\Psi\o\Fl^\xi_t(y)=\Fl^\eta_t(y)$. Differentiating this equation shows that
$\xi=\Psi^*\eta$. By construction the derivative of the function $t\mapsto
r(\Fl^\eta_t(y))$ is given by
$$
dr(\eta)=r^{-2}r_ih^{ij}r_j\equiv 1, 
$$ so $r(y)=0$ shows that $r(\Fl^\eta_t(y))=t$. In the same way
$\rho(\Fl^\xi_t(y))=t$, which shows that $\Psi^*r=\rho$.
Knowing this and $\xi\sim_N\eta$, the last part of the proof of
Theorem \ref{thm3.5} shows that $\Psi\sim_{N+1}\id$.

To show that $\Psi$ has the required property, observe that by
construction $r^2h_{ij}\eta^j=r_i$. That is $i_\eta r^2h=dr$ and in the
same way $i_\xi \rho^2g=d\rho$.  Using this and the above, we now
obtain
$$
i_\xi\rho^2\Psi^*h=i_{\Psi^*\eta}\Psi^*r^2h=\Psi^*(i_\eta r^2h)=\Psi^*(dr)=d\rho=i_\xi
\rho^2g.  
$$
This shows that, if we insert $\xi$ into the bilinear form
$\rho^2(\Psi^*h-g)$ (which by construction is smooth up to the
boundary), the resulting one form vanishes identically on a neighborhood of the
boundary. This exactly shows that $\Psi^*h$ is aligned with $g$ with
respect to $\rho$, so the proof of existence is complete.

To prove uniqueness, assume that $h$ is aligned to $g$ with respect to $\rho$ and
that a diffeomorphism $\Psi$ such that $\Psi\sim_{N+1}\id$ has the property that also
$\Psi^*h$ is aligned to $g$ with respect to $\rho$. Observe that $\Psi\sim_{N+1}\id$
implies that $\Psi^*h\in\Cal G$. As observed above, the fact that both $h$ and
$\Psi^*h$ are aligned with $g$ with respect to $\rho$ implies that $\rho$ is adapted
both to $h$ and to $\Psi^*h$. But on the other hand, the fact that $\rho$ is adapted
to $h$ of course implies that $\Psi^*\rho$ is adapted to $\Psi^*h$. Now by
construction $\rho\o\Psi\sim_{N+1}\rho$ and hence $(\rho\o\Psi)\Psi^*h$ and
$\rho\Psi^*h$ induce the same metric on the boundary. Hence the uniqueness part in
Proposition \ref{prop2.2} implies that $\rho\o\Psi=\rho$ and hence
$\Psi^*\rho_i=\rho_i$ on a neighborhood of the boundary.

Now of course the inverse metric to $\Psi^*h_{ij}$ is $\Psi^*(h^{ij})$
and $\Psi^*\rho_i\Psi^*h^{ij}=\rho_i\Psi^*h^{ij}$. Since
$\Psi^*h_{ij}$ is aligned with $g_{ij}$ with respect to $\rho$, we get
$ \rho_i\Psi^*h^{ij}=\rho_ig^{ij}$ and since also $h_{ij}$ is aligned
with $g$ with respect to $\rho$, $ \rho_ig^{ij}$ equals $\rho_ih^{ij}$.
Hence putting $\xi^j:=\rho_ih^{ij}$ we conclude that $\Psi^*\xi=\xi$.
Since $\Psi$ is the identity on the boundary, this implies that
$\Psi(\Fl^\xi_t(y))=\Fl^\xi_t(y)$ for $y$ in the boundary and $t$
sufficiently small, so $\Psi=\id$ locally around the boundary.
\end{proof}

\subsection{The action of the aligning diffeomorphism}\label{3.7}
Fix $g\in\Cal G$ and a local defining function $\rho$ for the boundary which is
adapted to $g$. Consider another metric $h\in\Cal G$ and the corresponding tensor
field $\mu_{ij}$ defined by \eqref{h-g}. From Theorem \ref{thm3.6} we then know that
locally around each boundary point, we find an essentially unique diffeomorphism
$\Psi$ such that $\Psi\sim_{N+1}\id$ and such that $\Psi^*h$ is aligned with $g$ with
respect to $\rho$. Since $\Psi^*h\in\Cal G$, the analog of \eqref{h-g} defines a
tensor field $\tilde\mu_{ij}$ that describes the difference between $\Psi^*h$ and
$g$. We now prove that the boundary value of $\tilde\mu_{ij}$ can be explicitly
computed from the boundary value of $\mu_{ij}$. This will be the crucial step towards
finding combinations of the two cocycles constructed in Sections \ref{3.1} and
\ref{3.2} which are invariant under diffeomorphisms asymptotic to the
identity. Observe that this formally looks like the coordinate formula in Proposition
2.16 of \cite{CDG}, but the actual meaning is  different: Our description
does not involve any choice of local coordinates, but uses only abstract indices.

\begin{thm}\label{thm3.7}
In the setting of Theorem \ref{thm3.6} for some fixed order $N$, let
$\mu_{ij}$ be the tensor field relating $h_{ij}$ and $g_{ij}$
according to \eqref{h-g}, and let $\tilde\mu_{ij}$ be the
corresponding tensor field relating $\Psi^*(h_{ij})$ and
$g_{ij}$. Then putting $\xi^i:=\rho^{-2}g^{ij}\rho_j$, we obtain
\begin{equation}\label{alignment-diffeo}
  \tilde\mu_{ij}=\mu_{ij}-\rho_i\mu_{j\ell}\xi^\ell-\rho_j\mu_{i\ell}\xi^\ell+
  \tfrac{\xi^k\mu_{k\ell}\xi^\ell}{N}(\rho^2g_{ij}+(N-1)\rho_i\rho_j)+\Cal O(\rho).
\end{equation}
\end{thm}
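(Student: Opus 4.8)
The plan is to verify the tensorial identity \eqref{alignment-diffeo} in one convenient local chart near $x$; since both sides are tensor fields that are defined and smooth up to $\partial M$ near $x$, this suffices. I would use the collar chart attached to $g$ and $\rho$: pushing a small chart of an open subset of $\partial M$ into $\barm$ by the flow $\Fl^\xi_t$ of $\xi^i=\rho^{-2}g^{ij}\rho_j$ produces coordinates $(y^\al,t)$ in which $\rho=t$, $\xi=\partial_t$, and—because $\rho$ is adapted to $g$—$\overline g:=\rho^2g=dt^2+\overline g_{\al\be}(y,t)\,dy^\al dy^\be$; correspondingly $\rho^2h=\overline g+t^N\mu_{ij}\,dx^idx^j$ by \eqref{h-g}, and $\rho^2\Psi^*h=\overline g+t^N\tilde\mu_{ij}\,dx^idx^j$ by definition of $\tilde\mu$. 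Recall from the proof of Theorem \ref{thm3.6} that $\Psi$ arises from a second defining function $r$ adapted to $h$, with $\overline g$ and $r^2h$ inducing the same boundary metric, via $\Psi\o\Fl^\xi_t=\Fl^\eta_t$ for $\eta^i=r^{-2}h^{ij}r_j$, and that $\rho\o\Fl^\xi_t(y)=t=r\o\Fl^\eta_t(y)$.

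Writing $\phi_1(y,t):=\Fl^\xi_t(y)$, $\phi_2(y,t):=\Fl^\eta_t(y)$, so $\phi_2=\Psi\o\phi_1$, the whole computation rests on the elementary identity
\[
\phi_1^*(\rho^2\Psi^*h)=\phi_2^*(r^2h)=F^*\bigl(\phi_1^*(r^2h)\bigr),\qquad F:=\phi_1^{-1}\o\phi_2 ,
\]
which follows from $\phi_1^*(\Psi^*h)=\phi_2^*h$ together with $\rho\o\phi_1=t=r\o\phi_2$; note that $F$ is the ``time-$t$-from-$(y,0)$'' flow of $\tilde\eta:=\phi_1^*\eta$, and $\eta-\xi=\Cal O(\rho^N)$ (from the proof of Theorem \ref{thm3.5}), so $\tilde\eta=\partial_t+t^N(\text{smooth})$. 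Next I would carry out two short expansions, working modulo $\Cal O(t^{N+1})$ throughout since we only want $\tilde\mu$ modulo $\Cal O(\rho)$, i.e.\ $t^N\tilde\mu$ modulo $\Cal O(t^{N+1})$. First, $r=\rho e^v$ with $v=\Cal O(\rho^N)$ (Lemma \ref{lem3.5}); expanding $r^{-2}h^{ij}r_ir_j\equiv1$ in the chart gives $v\o\phi_1=\tfrac1{2N}t^N\mu_{tt}+\Cal O(t^{N+1})$ (here and below $\mu_{tt},\mu_{\be t},\dots$ denote the chart components of $\mu$ at $t=0$, so e.g.\ $\mu_{tt}=\xi^k\mu_{k\ell}\xi^\ell$, $\mu_{\be t}=\mu_{\be\ell}\xi^\ell$), whence
\[
\phi_1^*(r^2h)=e^{2v\o\phi_1}\,\phi_1^*(\rho^2h)=\overline g+t^N\bigl(\mu+\tfrac1N\mu_{tt}\,\overline g\bigr)+\Cal O(t^{N+1}).
\]
Second, using \eqref{hinv} for $h^{ij}$ together with this expansion of $r$ one finds $\tilde\eta=\partial_t+t^N\bigl(Y^t\partial_t+Y^\al\partial_\al\bigr)+\Cal O(t^{N+1})$ with $Y^t=-\tfrac{N+1}{2N}\mu_{tt}$ and $Y^\al=-\overline g^{\al\be}\mu_{\be t}$; integrating the flow shows $F=\id+\Cal O(t^{N+1})$ with leading correction fixed by $\tilde\eta$, so that $F^*\bigl(\phi_1^*\overline g\bigr)=\overline g+t^N\bigl(2Y^t\,dt^2+2\overline g_{\al\be}Y^\al\,dt\,dy^\be\bigr)+\Cal O(t^{N+1})$, while $F^*(t^N\ka)=t^N\ka+\Cal O(t^{N+1})$ for any $\ka$ smooth up to the boundary.

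Feeding these into $\phi_1^*(\rho^2\Psi^*h)=F^*(\phi_1^*(r^2h))$ and comparing with $\rho^2\Psi^*h=\overline g+t^N\tilde\mu$ yields, in the chart,
\[
\tilde\mu=\mu+\tfrac1N\mu_{tt}\,\overline g+2Y^t\,dt^2+2\overline g_{\al\be}Y^\al\,dt\,dy^\be+\Cal O(\rho).
\]
Substituting the values of $Y^t,Y^\al$ (so that $2\overline g_{\al\be}Y^\al\,dt\,dy^\be=-2\mu_{\be t}\,dt\,dy^\be$) and simplifying, this coordinate expression matches, component by component, the right-hand side of \eqref{alignment-diffeo} written out in the same chart (where $d\rho=dt$, $\xi=\partial_t$, $\rho^2g_{ij}=\overline g_{ij}$): the $(\al,\be)$-component of $\tilde\mu$ is $\mu_{\al\be}+\tfrac1N\mu_{tt}\overline g_{\al\be}+\Cal O(\rho)$, while the $(t,t)$- and $(t,\al)$-components come out as $\Cal O(\rho)$—consistent with the fact that alignment of $\Psi^*h$ with $g$ forces $\xi^i\tilde\mu_{ij}\equiv0$, so that those components in fact vanish identically, a useful internal check. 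Since \eqref{alignment-diffeo} is a tensorial identity, its validity in this one chart establishes it in general.

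The main—and essentially the only—obstacle is the bookkeeping: one must retain every $\Cal O(t^N)$ contribution, and in particular both of their sources. The conformal rescaling $\rho\rightsquigarrow r$ produces the term $\tfrac1N(\xi^k\mu_{k\ell}\xi^\ell)\,\rho^2g_{ij}$, while the collar change $F$ produces the $\rho_i\mu_{j\ell}\xi^\ell+\rho_j\mu_{i\ell}\xi^\ell$ and $(N-1)\rho_i\rho_j$ pieces. A computation that drops the first—e.g.\ that overlooks that the collar of $\eta$ is adapted to $r^2h$ rather than to $\rho^2h$—yields the wrong coefficients (one checks, for instance, that it spoils the $\xi^i\tilde\mu_{ij}\equiv0$ consistency check). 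I expect no conceptual difficulty beyond carefully tracking these orders.
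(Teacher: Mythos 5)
Your proposal is correct and is in substance the paper's own argument: it rests on the same identification of $\Psi$ through the flows of $\xi^i=\rho^{-2}g^{ij}\rho_j$ and $\eta^i=r^{-2}h^{ij}r_j$ with $r\o\Psi=\rho$, the same expansion $r=e^v\rho$ with $v=\rho^N\tilde v$ and boundary value $\tilde v=\tfrac1{2N}\xi^k\mu_{k\ell}\xi^\ell$, and the same computation of $\eta-\xi$ from \eqref{hinv}, so the two order-$N$ contributions (conformal factor and flow correction) combine to exactly the coefficients in \eqref{alignment-diffeo}. The only difference is presentational: you assemble everything in the Graham-type collar chart by expanding the flow map $F$ and computing $F^*\overline g$ directly, whereas the paper does the equivalent bookkeeping invariantly via $\Psi^*\ze=\ze+\rho^N\tilde\ze$ together with the observation that boundary-tangent fields satisfy $\Psi^*\ze\sim_{N+1}\ze$.
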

\begin{proof}
We use the quantities introduced in the proof of Theorem \ref{thm3.6}:
We denote by $r$ the defining function adapted to $h_{ij}$ such that
$r^2h_{ij}$ and $\rho^2g_{ij}$ induce the same metric on the
boundary. Further we put $\xi^i:=\rho^{-2}g^{ij}\rho_j$ and
$\eta^i:=r^{-2}h^{ij}r_j$. In terms of these, we know from the proof
of Theorem \ref{thm3.6} that the alignment diffeomorphism $\Psi$
satisfies $r\o\Psi=\rho$ and $\Psi^*\eta=\xi$, and hence
$\Psi\o\Fl^\xi_t=\Fl^\eta_t$ wherever the flows are defined. Moreover,
writing $r=e^v\rho$, we know from the proof of Lemma \ref{lem3.5} that
$v=\rho^N\tilde v$ for a function $\tilde v$ that admits a smooth
extension to the boundary. Moreover, we can compute the boundary value
of $\tilde v$ from that proof: In equation \eqref{adap-PDE} we can
bring the first term on the right hand side to the left hand side and
then use the fact that $\rho$ is adapted to $g$ to rewrite
\eqref{adap-PDE} as
$$
(\rho^{-2}g^{ij}-\rho^{-2}h^{ij})\rho_i\rho_j=2\rho\rho^{-2}h^{ij}\rho_iv_j+\rho^2\rho^{-2}h^{ij}v_iv_j. 
$$ In the left hand side, we can insert \eqref{hinv} and use the definition of $\xi$
to obtain $\rho^N\xi^k\mu_{k\ell}\xi^\ell$. On the other hand $v_j=N\rho^{N-1}\tilde
v\rho_j+\Cal O(\rho^N)$. Inserting this in the right hand side and using that
$\rho^{-2}h^{ij}\rho_i\rho_j=1+\Cal O(\rho)$, we obtain $2N\rho^N\tilde v+\Cal
O(\rho^{N+1})$, which shows that
\begin{equation}\label{tildev}
\tilde v=\tfrac{1}{2N}\xi^k\mu_{k\ell}\xi^\ell+\Cal O(\rho). 
\end{equation}

The basis for the further computation will be the fact that for vector fields
$\ze_1,\ze_2$ (which we assume to be smooth up to the boundary), we get
$\Psi^*h(\Psi^*\ze_1,\Psi^*\ze_2)=h(\ze_1,\ze_2)\o\Psi$.  Multiplying by $\rho^2$, we
obtain
\begin{equation}\label{PB}
  \rho^2\Psi^*h(\Psi^*\ze_1,\Psi^*\ze_2)=(r^2h(\ze_1,\ze_2))\o\Psi,
\end{equation}
and both sides admit a smooth extension to the boundary. Hence the right hand side
equals $r^2h(\ze_1,\ze_2)+\Cal O(\rho^{N+1})$. Inserting $r=\rho e^v$ and
\eqref{h-g}, we conclude that this equals
$$
e^{2v}\rho^2g(\ze_1,\ze_2)+e^{2v}\rho^N\mu(\ze_1,\ze_2)+\Cal O(\rho^{N+1}).
$$
Of course, $e^{2v}=1+2\rho^N\tilde v+\Cal O(\rho^{N+1})$ and we conclude that the
right hand side of \eqref{PB} equals 
\begin{equation}\label{PB-RHS}
  \rho^2g(\ze_1,\ze_2)+\rho^N\big(2\tilde v\rho^2g(\ze_1,\ze_2)+
  \mu(\ze_1,\ze_2)\big)+\Cal O(\rho^{N+1}).
\end{equation}

The left hand side of \eqref{PB}, by definition, can be written as
$\rho^2g(\Psi^*\ze_1,\Psi^*\ze_2)+\rho^N\tilde\mu(\Psi^*\ze_1,\Psi^*\ze_2)$. Now we
know that $\Psi^*\ze_1\sim_N\ze_1$ and hence $\Psi^*\ze_1=\ze_1+\rho^N\tilde\ze_1$,
where $\tilde\ze_1$ admits a smooth extension to the boundary, and similarly for
$\ze_2$. Inserting this, we obtain
\begin{equation}\label{PB-LHS}
\rho^2g(\ze_1,\ze_2)+\rho^N\big(\rho^2g(\ze_1,\tilde\ze_2)+\rho^2g(\tilde\ze_1,\ze_2)+
\tilde\mu(\ze_1,\ze_2)\big)+\Cal O(\rho^{N+1}).
\end{equation}
Since this has to equal \eqref{PB-RHS}, we conclude that
\begin{equation}\label{tilde-mu-main}
  \tilde\mu(\ze_1,\ze_2)=\mu(\ze_1,\ze_2)-\rho^2g(\ze_1,\tilde\ze_2)-
  \rho^2g(\tilde\ze_1,\ze_2)+2\tilde v\rho^2g(\ze_1,\ze_2)+\Cal O(\rho). 
\end{equation}

The key observation now is that the computation of $\tilde\ze_1$ and $\tilde\ze_2$
essentially reduces to the computation of the vector field $\tilde\eta$ which has the
property that $\eta=\xi+\rho^N\tilde\eta$. As a first step, we claim that for a
vector field $\zeta$ which is tangent to the boundary along the boundary, we have
$\Psi^*\zeta\sim_{N+1}\zeta$. This can of course be proved locally, so we can use
local charts obtained from a collar construction as in the proof of Theorem
\ref{thm3.5}. These have $r$ as one coordinate and $\eta$ as the corresponding
coordinate vector field. We first consider the case that $\ze$ is the coordinate
vector field $\partial_i$ associated to one of the boundary coordinates. Of course,
$0=[\eta,\partial_i]$ and pulling back along $\Psi$, we conclude that
$0=[\xi,\Psi^*\partial_i]$. Now by Lemma \ref{lem3.4}, we know that $\xi\sim_N\eta$
and $\Psi^*\partial_i\sim_N\partial_i$, and we express this via
$\xi=\eta+r^N\tilde\eta$ and $\Psi^*\partial_i=\partial_i+r^N\tilde\zeta$, where
$\tilde\eta$ and $\tilde\ze$ admit a smooth extension to the boundary. Plugging these
expressions into the Lie bracket and using that $\partial_i\cdot r=0$ and $\eta\cdot
r=1$, we conclude that
$$
0=[\xi,\Psi^*\partial_i]=[\eta,\partial_i]+Nr^{N-1}\tilde\ze+\Cal O(r^N). 
$$
This shows that $\tilde\ze$ vanishes along the boundary and hence
$\Psi^*\partial_i\sim_{N+1}\partial_i$. Now a general vector field $\zeta$ that is
tangent to the boundary along the boundary can be be written as $f\eta+\sum
f_i\partial_i$ for arbitrary smooth functions $f_i$ and a smooth function $f$ which
is $\Cal O(r)$. Thus the general version of our claim follows readily since
$\Psi^*\eta\sim_N\eta$, $f_i\o\Psi\sim_{N+1}f_i$ and $f\o\Psi\sim_{N+1}f$. 

Now for any vector field $\ze$ that is smooth up to the boundary, the difference
$\ze-d\rho(\ze)\eta$ is smooth up to the boundary and tangent to the boundary along
the boundary. Of course $\Psi^*(d\rho(\ze)\eta)=(d\rho(\ze)\o\Psi)\xi$ and so this
equals $d\rho(\ze)\xi+\Cal O(\rho^{N+1})$. Thus,  writing
$\ze=d\rho(\ze)\eta+(\ze-d\rho(\ze)\eta)$ and pulling back, we get 
\begin{equation}\label{tilde-zeta}
\rho^N\tilde\ze=\Psi^*\ze-\ze=d\rho(\ze)(\xi-\eta)+\Cal O(\rho^{N+1}).
\end{equation}

To compute the difference $\xi-\eta$, we first
%% use $r=e^v\rho$ to conclude that
recall that $r_j=e^v\rho_j+e^v\rho v_j$ and $v_j=N\rho^{N-1}\tilde v\rho_j+\Cal O(\rho^N)$. This
shows that $r_j=\rho_j(1+(N+1)\rho^N\tilde v)+\Cal O(\rho^{N+1})$. Next, by
definition $\eta^i=e^{-2v}\rho^{-2}h^{ij}r_j$ and
$$
e^{-2v}(1+(N+1)\rho^N\tilde v)=1+(N-1)\rho^N\tilde v+\Cal O(\rho^{N+1}).
$$
Now \eqref{hinv} shows that
$$
\rho^{-2}h^{ij}=\rho^{-2}g^{ij}-\rho^N(\rho^{-2}g^{ik}\mu_{k\ell}\rho^{-2}g^{\ell
  j})+\Cal O(\rho^{N+1}). 
$$
Putting all this together, we see that
\begin{equation}\label{eta-xi}
\eta^j-\xi^j=\rho^N((N-1)\tilde v\xi^j-\rho^{-2}g^{jk}\mu_{k\ell}\xi^\ell)+\Cal
O(\rho^{N+1}).    
\end{equation}
Dividing the negative of the right hand side by $\rho^N$ and contracting with
$\rho^2g_{ij}$, we obtain $-(N-1)\tilde v\rho_j+\mu_{j\ell}\xi^{\ell}$. Using this
and \eqref{tilde-zeta}, we can write \eqref{tilde-mu-main} in abstract index notation 
as
\begin{equation}\label{tilde-mu-main2}
\tilde\mu_{ij}=\mu_{ij}+2(N-1)\tilde v\rho_i\rho_j-\rho_i\mu_{j\ell}\xi^\ell
-\rho_j\mu_{i\ell}\xi^\ell +2\tilde v\rho^2g_{ij}+\Cal O(\rho), 
\end{equation}
which together with \eqref{tildev} exactly gives the claimed formula. 
\end{proof}

Using this, we can easily deduce that appropriate combinations of the cocycles
constructed in Sections \ref{3.1} and \ref{3.2} remain unchanged if one of the two
metrics involved is pulled back by a diffeomorphisms that is asymptotic to the
identity. Since we are dealing with the situation of the classical mass here, we have
to specialize to the case that $N=n$. 

\begin{cor}\label{cor3.7}
In our usual setting of $\barm=M\cup\partial M$, let $\Cal G$ be an equivalence
class of ALH metrics on $M$ for the relation $\sim_{n-2}$. Let $c_1$ and $c_2$ be the
cocycles constructed in Sections \ref{3.1} and \ref{3.2}, respectively, and let $c:=\tfrac1nc_1+\tfrac12c_2$. Then $c$ defines a cocycle on
$\Cal G$ that has the property that for metrics $g,h\in\Cal G$ and any diffeomorphism
$\Phi\in \Diff_0^{n+1}(\barm)$, we get $c(g,h)=c(g,\Phi^*h)$.
\end{cor}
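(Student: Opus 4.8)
The plan is to use the cocycle property to reduce the statement to the vanishing $c(h,\Phi^*h)=0$, and then to exhibit $\Phi^*h$ as a metric ``aligned'' with $h$ so that the transformation formula of Theorem \ref{thm3.7} applies. Since $c_1$ and $c_2$ are cocycles (Corollaries \ref{cor3.1} and \ref{cor3.2}), any constant multiple $c$ of $\tfrac1nc_1+\tfrac12c_2$ is again a cocycle, and $c(g,\Phi^*h)=c(g,h)+c(h,\Phi^*h)$; hence it suffices to show $c(h,\Phi^*h)=0$ for every $h\in\Cal G$ and every $\Phi\in\Diff_0^{n+1}(\barm)$. By Theorem \ref{thm3.5}(1) we have $\Phi\in\Diff_{\Cal G}(\barm)$, so $\Phi^*h\in\Cal G$ and the left-hand side makes sense. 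As $c_1$ and $c_2$ are, by construction, boundary values of locally defined tractor-valued forms, it is enough to prove the vanishing near a fixed $x\in\partial M$; there I would use Proposition \ref{prop2.2} to choose a local defining function $\rho$ adapted to $h$.

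Next I would make the value of $\tfrac1nc_1+\tfrac12c_2$ explicit. Put $\overline{g}:=\rho^2h$, let $\overline{\si}\in\Ga(\Cal E[1])$ be its density, set $\xi^i:=\rho^{-2}h^{ij}\rho_j$, and, for the tensor field $\mu_{ij}$ relating $\Phi^*h$ to $h$ via \eqref{h-g}, put $m:=\overline{g}^{ij}\mu_{ij}$ and $a:=\xi^k\mu_{k\ell}\xi^\ell$. Rewriting the boundary-value formulas \eqref{c12} and \eqref{c22} of Propositions \ref{prop3.1} and \ref{prop3.2} in density form, as done in Sections \ref{3.1} and \ref{3.2}, one computes
\begin{equation}\label{comb-bdry}
\bigl(\tfrac1nc_1+\tfrac12c_2\bigr)(h,\Phi^*h)=\tfrac12\,(nm-a)_\infty\,\overline{\si}_\infty^{-n}\,\mathbf X^A .
\end{equation}
The precise ratio $\tfrac1n:\tfrac12$ of the coefficients is exactly what assembles the ``$m$''-contribution of $c_1$ and the ``$a$''-contribution of $c_2$ into $nm-a$, while $\overline{\si}_\infty^{-n}\mathbf X^A$ depends only on $h$ and $\rho$. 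Since $c$ is a constant multiple of $\tfrac1nc_1+\tfrac12c_2$, it now suffices to prove $(nm-a)_\infty=0$.

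For this I would observe that $\Phi^{-1}$ already realises $\Phi^*h$ as aligned with $h$ relative to $\rho$: indeed $\Phi^{-1}\sim_{n+1}\id$ (inversions preserve $\sim_{n+1}$, as noted after Definition \ref{def3.4}), and $(\Phi^{-1})^*(\Phi^*h)=h$, which is trivially aligned with $h$ with respect to $\rho$. By the uniqueness clause of Theorem \ref{thm3.6}, $\Phi^{-1}$ agrees near $\partial M$ with the aligning diffeomorphism produced by that theorem for the pair $(h,\Phi^*h)$ and the adapted function $\rho$. We are therefore in the setting of Theorem \ref{thm3.7} with $g$ replaced by $h$, with $h$ replaced by $\Phi^*h$, with $\Psi=\Phi^{-1}$ and $N=n$, and the tensor field $\tilde\mu_{ij}$ occurring there, which relates $(\Phi^{-1})^*(\Phi^*h)=h$ to $h$, vanishes identically near $\partial M$. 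Hence \eqref{alignment-diffeo} reads
\begin{equation}\label{aligned-rel}
0=\mu_{ij}-\rho_i\mu_{j\ell}\xi^\ell-\rho_j\mu_{i\ell}\xi^\ell+\tfrac{a}{n}\bigl(\rho^2h_{ij}+(n-1)\rho_i\rho_j\bigr)+\Cal O(\rho)
\end{equation}
near $\partial M$. Contracting \eqref{aligned-rel} with $\overline{g}^{ij}$ and using $\overline{g}^{ij}\rho_i=\xi^j$, $\overline{g}^{ij}\overline{g}_{ij}=n$, and $\overline{g}^{ij}\rho_i\rho_j=\xi^i\rho_i=1$ on a neighbourhood of $\partial M$ (this uses that $\rho$ is adapted to $h$), the $\Cal O(\rho)$-term has no effect at the boundary and one obtains $0=m-2a+\tfrac{a}{n}(2n-1)+\Cal O(\rho)=m-\tfrac{a}{n}+\Cal O(\rho)$, so $(nm-a)_\infty=0$. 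By \eqref{comb-bdry} this yields $c(h,\Phi^*h)=0$ near $x$, hence on all of $\partial M$, and therefore $c(g,h)=c(g,\Phi^*h)$.

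The step I expect to be the main work is the verification of \eqref{comb-bdry}: one has to unwind the density reformulations of Propositions \ref{prop3.1} and \ref{prop3.2} with some care and check that, with the coefficient ratio $\tfrac1n:\tfrac12$, the two boundary contributions combine into one multiple of $(nm-a)_\infty$ — a different ratio would leave a residual, non-invariant term, and this is precisely why the statement singles out this combination. The rest is routine bookkeeping with orders of vanishing, the only slightly delicate point being to make sure that the $\Cal O(\rho)$ remainder in \eqref{alignment-diffeo} and the identities $\overline{g}^{ij}\rho_i\rho_j=1$, $\xi^i\rho_i=1$ (which hold near $\partial M$ only because $\rho$ is adapted to $h$) do not contaminate the boundary value after the contraction.
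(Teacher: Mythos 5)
Your proof is correct, and it leans on the same key ingredients as the paper's argument --- the uniqueness clause of Theorem \ref{thm3.6}, the transformation formula \eqref{alignment-diffeo} of Theorem \ref{thm3.7}, and the boundary formulas \eqref{c12}, \eqref{c22} of Propositions \ref{prop3.1} and \ref{prop3.2} --- but it assembles them along a genuinely different route. The paper fixes $g$ and a defining function $\rho$ adapted to $g$, shows directly that the boundary value of $\tfrac{n}2\rho^{-2}g^{ij}\mu_{ij}-\tfrac12\xi^i\mu_{ij}\xi^j$ is unchanged when $\mu_{ij}$ is replaced by the $\tilde\mu_{ij}$ of the aligning diffeomorphism, and then treats an arbitrary $\Phi\in\Diff_0^{n+1}(\barm)$ by noting that the aligned representative of the $\Diff_0^{n+1}$-orbit of $h$ is unique. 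You instead invoke the cocycle property of $c_1,c_2$ (Corollaries \ref{cor3.1} and \ref{cor3.2}) to reduce the whole statement to $c(h,\Phi^*h)=0$, take $\rho$ adapted to $h$, and observe that $\Phi^{-1}\in\Diff_0^{n+1}(\barm)$ already realises the alignment for the pair $(h,\Phi^*h)$ since $(\Phi^{-1})^*\Phi^*h=h$; by the uniqueness in Theorem \ref{thm3.6} this legitimately puts you in the setting of Theorem \ref{thm3.7} with $\tilde\mu_{ij}=0$, so \eqref{alignment-diffeo} degenerates into a constraint on $\mu_{ij}$ along the boundary whose trace gives $n\,\rho^{-2}h^{ij}\mu_{ij}-\xi^i\mu_{ij}\xi^j\to 0$. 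I checked your displayed formula $\bigl(\tfrac1nc_1+\tfrac12c_2\bigr)(h,\Phi^*h)=\tfrac12\bigl(n\,\rho^{-2}h^{ij}\mu_{ij}-\xi^i\mu_{ij}\xi^j\bigr)_\infty\,\overline{\si}_\infty^{-n}\mathbf{X}^A$ against \eqref{c12} and \eqref{c22} (using $\mu^0_{ij}=\mu_{ij}-\tfrac1n\rho^2\mu\,h_{ij}$, adaptedness of $\rho$, and the identification of $\vol_{\overline{g}_\infty}$ with $\overline{\si}_\infty^{1-n}$): it is right, and your subsequent trace computation is exactly the contraction appearing in the paper's proof. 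What your arrangement buys is that the arbitrary $\Phi$ is absorbed at the very start and the transformation formula is only needed in the degenerate case $\tilde\mu=0$; the (mild) price is the extra reliance on the cocycle identity, which the paper's version of the argument does not need.
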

\begin{proof}
  We fix $g\in\Cal G$ and a local defining function $\rho$ that is adapted to
  $g$. Then we show that for $c=\tfrac1nc_1+\tfrac12c_2$ and the alignment
  diffeomorphism $\Psi$ obtained from Theorem \ref{thm3.7}, we get
  $c(g,\Psi^*h)=c(g,h)$. The last part of Theorem \ref{thm3.6} shows that $\Psi^*h$
  is the unique metric in the the orbit of $h$ under $\Diff_0^{n+1}(\barm)$ which is
  aligned to $g$ with respect to $\rho$. Hence applying the construction of Theorem
  \ref{thm3.7} to $\Phi^*h$ for arbitrary $\Phi\in \Diff_0^{n+1}(\barm)$, we also
  have to arrive at $\Psi^*h$, which then implies the result.

  Using the formulae in parts (2) of Propositions \ref{prop3.1} and \ref{prop3.2}, we
  see that to prove our claim it suffices to show that the boundary value of
  \begin{equation}\label{expr}
   \tfrac{n^2-1}{2n}\rho^{-2}g^{ij}\mu_{ij}-\tfrac12\xi^i\mu^0_{ij}\xi^j 
  \end{equation}
  coincides with the boundary value of the analogous expression formed from
  $\tilde\mu_{ij}$. Inserting
  $\mu^0_{ij}=\mu_{ij}-\tfrac{1}{n}\rho^{-2}g^{k\ell}\mu_{k\ell}\rho^2g_{ij}$ and
  using that $\xi^i\rho^2g_{ij}\xi^j=1$ on a neighborhood of $\partial M$, we see that
  \eqref{expr} equals 
  $$
   \tfrac{n}2\rho^{-2}g^{ij}\mu_{ij}-\tfrac12\xi^i\mu_{ij}\xi^j. 
   $$
   Contracting $\rho^{-2}g^{ij}$ into formula \eqref{alignment-diffeo} (for the case
   $N=n$) and multiplying by $\tfrac{n}2$, we obtain
   $$
   \tfrac{n}2\rho^{-2}g^{ij}\tilde\mu_{ij}=\tfrac{n}2\rho^{-2}g^{ij}\mu_{ij}-
   \tfrac12\xi^i\mu_{ij}\xi^j.  
    $$
   By alignment, $\xi^i\tilde\mu_{ij}=0$, so this proves our claim. 
\end{proof}

\begin{remark}\label{rem3.7} The computations in this section can also
    be used to show that the cocycle from Corollary \ref{cor3.7} simplifies in an
    important special case. Assume that we deal with two metrics $g_{ij}$ and
    $h_{ij}$ in $[\Cal G]$ such that there is a local defining function $\rho$ which
    is adapted to both metrics at the same time. By definition, this implies 
    $\rho_ih^{ij}\rho_j=\rho_ig^{ij}\rho_j$ on a neighborhood of the
    boundary. Assuming this, we can contract $\rho_i\rho_j$ into equation
    \eqref{hinv} and putting $\xi^i=\rho^{-2}g^{ij}\rho_j$ as above, the result reads
    as $0=\rho^{N+2}\xi^k\mu_{k\ell}\xi^\ell+\Cal O(\rho^{N+3})$. Thus we conclude
    that $ \xi^k\mu_{k\ell}\xi^\ell$ vanishes along the boundary. But in the proof of
    Corollary \ref{cor3.7} we have seen that $\xi^k\mu^0_{k\ell}\xi^\ell$ is a linear
    combination of $\xi^k\mu_{k\ell}\xi^{\ell}$ and of $\mu$. We have also seen there
    that our cocycle involves only the boundary values
    of $\xi^k\mu^0_{k\ell}\xi^\ell$ and of $\mu$, so under the current assumptions
    this reduces to a multiple of $\mu$ only.
  \end{remark}

\subsection{From relative to absolute invariants}\label{3.8}
So far, we have not imposed any restriction on the equivalence class
$\Cal G$ of metrics beyond the fact that it consists of
ALH-metrics. We next show that assuming that $\Cal G$ locally contains
metrics that are hyperbolic (i.e.~have constant sectional curvature
$-1$), one can use our construction to obtain an invariant for
(single) metrics in $\Cal G$. This assumption of course implies that
the conformal infinity $[\Cal G_\infty]$ on $\partial M$ is conformally flat,
but as we shall see below, it does not impose further restrictions on
the topology of $\barm$.

The key step toward this are results on the uniqueness of hyperbolic
metrics with prescribed infinity that are discussed in Chapter 7 of
\cite{FeffGr}. These build on results in \cite{SkenSol} and are
related to the work in \cite{Epstein}.

\begin{thm}\label{thm3.8}
Consider our usual setting, of $\barm=M\cup\partial M$, and an equivalence class
$\Cal G$ of ALH metrics on $M$ for the relation $\sim_{n-2}$. Assume that for each
$x\in\partial M$ there is an open neighborhood $U$ of $x$ in $\barm$ and a metric $g$
in $\Cal G$ that is hyperbolic (i.e.\ has constant sectional curvature $-1$) on
$U$. Let $c$ denote the cocycle  from
Corollary \ref{cor3.7}. 

Then for an open subset $U$ as  above, and two metrics $g_1,g_2\in\Cal
G$ that are hyperbolic on $U$, we get $c(g_1,h)|_{U\cap\partial
  M}=c(g_2,h)|_{U\cap\partial M}$ for any $h\in\Cal G$. Hence these
quantities fit together to a well-defined section
$c(h)\in\Om^{n-1}(\partial M,\Cal T\partial M)$, thus defining a map
$c$ from $\Cal G$ to tractor-valued  differential forms. This is equivariant under
diffeomorphisms preserving $\Cal G$ in the sense that for
$\Phi\in\Diff_{\Cal G}(\barm)$, we obtain
$$
c(\Phi^*h)=(\Phi_\infty)^*c(h).
$$
Here $\Phi_{\infty}:=\Phi|_{\partial M}\in\Conf(\partial M)$ and in the right hand
side we use the standard action of conformal isometries on tractor-valued differential forms.
\end{thm}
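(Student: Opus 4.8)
The plan is to establish the three assertions in turn, with essentially all of the work in the first one. For the independence statement, I would fix $h\in\Cal G$ and an open set $U$ as in the statement carrying two hyperbolic metrics $g_1,g_2\in\Cal G$; by the cocycle property from Corollary \ref{cor3.7} one has $c(g_1,h)-c(g_2,h)=c(g_1,g_2)$, so it suffices to show that $c(g_1,g_2)$ vanishes on $U\cap\partial M$. This I would do locally around a point $x\in U\cap\partial M$. Choose, via Proposition \ref{prop2.2}, a local defining function $\rho$ adapted to $g_1$, and apply Theorem \ref{thm3.6} with $g=g_1$ and $h=g_2$ to obtain a diffeomorphism $\Psi\sim_{n+1}\id$, defined near $x$, such that $\Psi^*g_2$ is aligned to $g_1$ with respect to $\rho$. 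Since $\Psi$ is a diffeomorphism, $\Psi^*g_2$ again has constant sectional curvature $-1$ near $x$; by the remark following Definition \ref{def3.6} the function $\rho$ is adapted to $\Psi^*g_2$ as well; and since $g_1,g_2\in\Cal G$ and $\Psi\sim_{n+1}\id$, the metrics $\rho^2g_1$ and $\rho^2\Psi^*g_2$ induce the same metric on $\partial M$. At this point I would invoke the uniqueness of hyperbolic metrics with prescribed conformal infinity (Chapter 7 of \cite{FeffGr}, building on \cite{SkenSol} and related to \cite{Epstein}): in the adapted-and-aligned normal form just arranged, a hyperbolic metric is determined along $\partial M$ by its induced boundary metric, so $\Psi^*g_2=g_1$ to infinite order along $\partial M$ near $x$. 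Hence $c(g_1,g_2)=c(\Psi^*g_2,g_2)$ near $x$, and combining antisymmetry of the cocycle with Corollary \ref{cor3.7} (applied with $\Phi=\Psi$, extended arbitrarily to a global element of $\Diff_0^{n+1}(\barm)$, which does not affect anything near $x$) gives $c(g_1,g_2)=-c(g_2,\Psi^*g_2)=-c(g_2,g_2)=0$ near $x$. As $x$ was arbitrary, $c(g_1,g_2)|_{U\cap\partial M}=0$.

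The gluing step is then formal. Every $x\in\partial M$ lies in some open set $U_x$ carrying a hyperbolic metric $g_x\in\Cal G$, and $c(g_x,h)$ is a globally defined element of $\Om^{n-1}(\partial M,\Cal T\partial M)$. On an overlap $U_x\cap U_{x'}$ both $g_x$ and $g_{x'}$ are hyperbolic, so the previous step (with $U=U_x\cap U_{x'}$) yields $c(g_x,h)=c(g_{x'},h)$ there. Thus the restrictions $c(g_x,h)|_{U_x\cap\partial M}$ agree on overlaps and patch to a well-defined form $c(h)\in\Om^{n-1}(\partial M,\Cal T\partial M)$, which is smooth because near each point it coincides with a smooth local representative $c(g_x,h)$.

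For the equivariance, I would fix $\Phi\in\Diff_{\Cal G}(\barm)$ and a point $y\in\partial M$, put $x:=\Phi(y)=\Phi_\infty(y)$, and choose $g\in\Cal G$ hyperbolic near $x$. Then $\Phi^*g$ has constant sectional curvature $-1$ near $y$ (pullback preserves this) and lies in $\Cal G$ since $\Phi$ preserves $\Cal G$; hence $c(\Phi^*h)$ agrees near $y$ with $c(\Phi^*g,\Phi^*h)$, while $c(h)$ agrees near $x$ with $c(g,h)$. Now Proposition \ref{prop3.3} gives $c(\Phi^*g,\Phi^*h)=(\Phi_\infty)^*c(g,h)$, and since the value of $(\Phi_\infty)^*c(g,h)$ at $y$ depends only on $c(g,h)$ near $\Phi_\infty(y)=x$, i.e.\ on $c(h)$, evaluating at $y$ gives $c(\Phi^*h)(y)=\big((\Phi_\infty)^*c(h)\big)(y)$; as $y$ was arbitrary, $c(\Phi^*h)=(\Phi_\infty)^*c(h)$.

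The main obstacle is the first step, and within it the precise invocation of the uniqueness of hyperbolic fillings: one must upgrade the a priori agreement of $g_1$ and $g_2$ (which holds only to the order built into $\Cal G$) to actual coincidence near the boundary, once the two metrics have been put into the common normal form supplied by Theorem \ref{thm3.6}. Note that, since the boundary value of the cocycle depends on the metrics only modulo $\sim_{n-1}$, agreement to that finite order would already suffice; but in the hyperbolic case \cite{FeffGr} in fact gives agreement to all orders, which is cleaner. Everything else---the cocycle and antisymmetry identities, Corollary \ref{cor3.7}, Proposition \ref{prop3.3}, and the patching---is routine.
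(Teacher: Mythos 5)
Your strategy is viable and genuinely different from the paper's in its key step, so let me first compare. The paper proves the independence claim by quoting Proposition 7.4 of \cite{FeffGr} as a black box: that result directly supplies a diffeomorphism $\Psi$, defined on a neighbourhood $V$ of $U\cap\partial M$ and restricting to the identity on $U\cap\partial M$, with $g_1|_V=\Psi^*(g_2|_V)$; then Theorem \ref{thm3.5}~(2) (using $g_1,g_2\in\Cal G$) shows $\Psi$ is asymptotic to the identity of order $n+1$, and Corollary \ref{cor3.7} finishes. You instead reduce, via the cocycle identity, to showing $c(g_1,g_2)=0$, manufacture the diffeomorphism yourself from the alignment Theorem \ref{thm3.6} (so you get $\Psi\sim_{n+1}\id$ for free and never need Theorem \ref{thm3.5}), and then appeal to a normal-form uniqueness statement for hyperbolic metrics. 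This is a reasonable unpacking of what \cite{FeffGr} does, and it makes explicit that only finite-order agreement is needed. Your patching argument and your proof of equivariance via Proposition \ref{prop3.3} coincide with the paper's; your local use of Corollary \ref{cor3.7} is also consistent with how the paper itself applies it.

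The one genuine weak point is the uniqueness statement as you phrase it: ``in the adapted-and-aligned normal form \dots a hyperbolic metric is determined along $\partial M$ by its induced boundary metric.'' This is correct for $\dim\barm\geq 4$ (the Skenderis--Solodukhin/Fefferman--Graham normal form of a constant-curvature metric terminates at order four and is determined by $g_0$ and its Schouten tensor), but it is false for $\dim\barm=3$, a case the theorem explicitly covers: for a $2$-dimensional boundary, constant curvature $-1$ is just the Einstein condition, and the order-$\rho^2$ coefficient of the normal form has a free trace-free, divergence-free part. Concretely, $\rho^{-2}(d\rho^2+dx^2+dy^2)$ and $\rho^{-2}\bigl(d\rho^2+(1+\tfrac{\rho^2}{4})^2dx^2+(1-\tfrac{\rho^2}{4})^2dy^2\bigr)$ are both hyperbolic and both in normal form relative to the same flat boundary metric, yet they differ at order $\rho^2$ and so are not related by any diffeomorphism restricting to the identity on the boundary. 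The repair is available inside your own argument: since $g_1$ and $\Psi^*g_2$ both lie in $\Cal G$, their compactifications agree to order $n=3$, which pins down the otherwise free order-two coefficient, and constant curvature then forces the normal forms to coincide exactly near the boundary. So you must invoke membership in $\Cal G$, not just the common boundary metric, at this point; this is precisely the spot where the paper instead leans on Proposition 7.4 of \cite{FeffGr} and the discussion of its proof. With that amendment your argument goes through in all dimensions.
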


\begin{proof}
  Suppose that $g_1,g_2\in\Cal G$ are hyperbolic on $U$. Then we can
  apply Theorem 7.4 of \cite{FeffGr} (see in
    particular the paragraph right after the proof of this theorem in
  \cite{FeffGr}) to their restrictions to $U$. Observe
    also that the additional condition that is needed in Theorem 7.4
    of \cite{FeffGr}, for the case $n=3$, is that the Schouten tensors of
    the two metrics in question agree along the boundary. For the
    rescalings of metrics in $\Cal G$ which extend to the boundary, we
    have verified this in the end of Section \ref{2.5}. This implies
  that there is a neighborhood $V$ of $U\cap\partial M$ in $U$ and a
  diffeomorphism $\Psi:V\to V$ which restricts to the identity on
  $U\cap\partial M$ such that $g_1|_{V}=\Psi^*(g_2|_V)$. Theorem
  \ref{thm3.5} and Corollary \ref{cor3.7} then immediately imply that
  $c(g_1,h)$ and $c(g_2,h)$ coincide on $U\cap\partial M$. It is then
  clear that we obtain the map $c$ as claimed.
 
  The equivariancy of $c$ can be proved locally. So we take $\Phi\in\Diff_{\Cal
    G}(\barm)$ and let $\Phi_\infty$ be its restriction to the boundary. Given
  $x\in\partial M$ we find an open neighborhood $U$ of $x$ in $\barm$ and a metric
  $g\in\Cal G$ such that $g|_U$ is hyperbolic.  Now $\Phi^{-1}(U)$ is an open
  neighborhood of $\Phi^{-1}(x)$ in $\barm$ and $\Phi^*g|_{\Phi^{-1}(U)}$ is
  hyperbolic on $\Phi^{-1}(U)$. Thus we can compute $c(\Phi^*h)$ as
  $c(\Phi^*g,\Phi^*h)$ on $\Phi^{-1}(U)$, and by Proposition \ref{prop3.3} this
  coincides with $(\Phi_\infty)^*(c(g,h)|_U)$. Since $c(g,h)|_U=c(h)|_U$, this implies the
  claim.
\end{proof}

Suppose that $x\in\partial M$ and $U$ is an open neighborhood of $x$
in $\barm$ such that $\Cal G$ contains a metric $g$ which is
hyperbolic on $U$. Then of course the conformal class $[\Cal G_\infty]$ has
to be flat on $U\cap\partial M$. In particular, the assumptions of
Theorem \ref{thm3.8} imply that $(\partial M,[\Cal G_\infty])$ is conformally
flat, which in turn imposes restrictions on $\partial M$. However, if
we are given a manifold $\barm$ with boundary $\partial M$ and a flat
conformal structure on $\partial M$, then there always is a class $\Cal G$
of conformally compact metrics on $M$, for which the assumptions of
Theorem \ref{thm3.8} are satisfied, and hence we obtain an invariant for
single metrics in $\Cal G$.

Indeed, Proposition 7.2 of \cite{FeffGr} (see also the discussion on
p.\ 72 of that reference) shows that there is a hyperbolic metric $g$
on some open neighborhood of $\partial M$ in $\barm$ which induces the
given boundary structure. Then of course $g$ determines an equivalence
class $\Cal G$ of conformally compact ALH-metrics on $M$ for which all
the assumptions of Theorem \ref{thm3.8} are satisfied.

\medskip

We want to point out that it is not clear whether the condition of
conformal flatness in Theorem \ref{thm3.8} is of a fundamental
nature. What one would need in more general situations is a class of
``model metrics'' in $\Cal G$ which can be characterized well enough
to obtain ``uniqueness up to diffeomorphism'' in a form as used in the
proof of Theorem \ref{thm3.8}. An obvious idea is to assume that $\Cal
G$ contains at least one Einstein metric (which is a condition that is
stable under diffeomorphism) and then look at appropriate classes of
Einstein metrics in $\Cal G$. In general, the Einstein condition is
certainly not enough to pin down a metric up to diffeomorphism,
compare with the non-uniqueness issues for the ambient
metric \cite{FeffGr}. However, it is well possible that there are situations in
which additional (geometric) conditions can be imposed to ensure
uniqueness.

\subsection{Recovering mass}\label{3.9}
We now show that in the special case of hyperbolic space that, by an integration
process of our cocycles, we can recover the mass for asymptotically hyperbolic
metrics as introduced by Wang \cite{Wang} and Chru\'{s}ciel-Herzlich
\cite{Chrusciel-Herzlich}. In order to have an appropriate notion of
  integration available, we need the tractor bundle of the boundary to be globally
  trivialized by parallel sections. This essentially means that the boundary has to
  be a sphere. For other topologies, we can distill numerical global invariants out
  of our local invariant, but they cannot be expected to be as strong as a full
  ``integral'' of the local invariant to a tractor, see part (2) of Remark
  \ref{rem3.9} below. So we specialize to the case that $\barm$ is an open
neighborhood of the boundary $S^{n-1}$ in the closed unit ball and that $\Cal G$ is
the equivalence class of (the restriction to $M$ of) the Poincar\'e metric which we
denote by $g$ here. This of course implies that $[\Cal G_\infty]$ is the round
conformal structure on $S^{n-1}$ and that $\Cal G$ satisfies the conditions of
Theorem \ref{thm3.8}. Hence we get a map $c:\Cal G\to \Om^{n-1}(S^{n-1},\Cal
TS^{n-1})$ as described there.

If $n\geq 4$, conformal flatness of the round metric on $S^{n-1}$ implies that the
tractor connection $\nabla^{\Cal T}$ is flat. Moreover, since $(\partial M,[\Cal
  G_\infty])$ is the homogeneous model of conformal structures, the tractor bundle
$\Cal T\partial M$ admits a global trivialization by parallel sections. This extends
to the case $n=3$ with the tractor connection on $S^2$ constructed as discussed in
Section \ref{2.5}. Indeed, since $g$ is conformally flat and Einstein, the ambient
tractor connection is flat and the scale tractor $I^A$ is parallel on all of
$\barm$. In view of Proposition \ref{prop2.6}, $I^A$ thus provides a
  parallel extension of the normal tractor $N^A$ off the boundary, and by \cite{BEG}
  and e.g.\ Lemma 6.2 of \cite{Curry-Gover}, this implies that $\partial M$ is
  totally umbilic in $\barm$. (Recall from Section \ref{2.5} that the trace-free part
  of the second fundamental form is conformally invariant along $\partial M$, so
  being umbilic is a conformally invariant condition.) Hence the second fundamental
form with respect to any metric conformal to $g$ is pure trace. Using a scale with
vanishing mean curvature, as in Section \ref{2.5}, the second fundamental form
actually vanishes. Hence the ambient Levi Civita connection restricts to the Levi
Civita connection on the boundary and by definition we use the restriction of the
ambient Schouten tensor in the construction of the tractor connection on the boundary
in Section \ref{2.5}. Hence formula \eqref{trac-conn} directly implies that the
boundary tractor connection coincides with the restriction of the ambient tractor
connection, so it is flat since the normal tractor is parallel. The trivialization by
parallel sections then works exactly as in higher dimensions. 

This easily implies that, fixing an orientation on $\partial M$, there
is a well-defined integral that associates to each form
$\om\in\Om^{n-1}(\partial M, \Cal T \partial M)$ a parallel section of
$\Cal T\partial M$. Indeed, on $\partial M$ we can take a global frame
$\{s_i\}$ of $\Cal T \partial M$ consisting of parallel sections,
expand $\om$ as $\sum_i\om_is_i$ with $\om_i\in\Om^{n-1}(\partial M)$
and then define $\int_{\partial M}\om:=\sum_i(\int_{\partial M}\om_i)
s_i$. Of course, any other parallel frame consists of linear
combinations of the $s_i$ with constant coefficients, so the result is
independent of the choice of parallel frame. 

Now the boundary tractor metric induces a tensorial map $\Om^{n-1}(\partial
M,\Cal T \partial  M)\x\Ga(\Cal T \partial  M  ) \to\Om^{n-1}(\partial M)$,
which we write as $(\om,s)\mapsto\langle \om,s\rangle$. Observe that
the definition of the integral readily implies that for any parallel
section $s\in\Ga(\Cal T \partial  M)$ on $\partial M$, we obtain $\langle
\int_{\partial M}\om,s\rangle=\int_{\partial
  M}\langle\om,s\rangle$. In particular, the coefficients of
$\int_{\partial M}\om$ with respect to a parallel frame can be
computed as an ordinary integral over an $(n-1)$-form. Given a metric
$h\in\Cal G$, we can in particular apply this to $c(h)$ and we will to
show that, after appropriate normalization, $\int_{\partial M}c(h)$
recovers the mass of $h$.

We will work on $\barm$ with the extension of the conformal class of $g$, which we
again denote by $[g]$. Since $g$ is conformally flat, this leads to a flat tractor
connection and $[g]$ restricts to $[\Cal G_\infty]$ on $\partial M$. Hence any
parallel section $s\in\Ga(\Cal T\partial M)$ can be extended to a parallel section of
$\Cal T\barm$ on a neighborhood of $\partial M$. (In fact, also $\Cal T\barm$ is
globally trivialized by parallel sections, but we don't really need this here.)
Parallel sections of the standard tractor bundle are well understood and
  the approach to the standard tractor bundle in \cite{BEG} is based on their
  relation to the conformal-to-Einstein operator from Remark \ref{rem3.1}. As
  discussed there, the conformal-to-Einstein operator is a conformally invariant
  differential operator which maps $\Ga(\Cal E[1])$ to $\Ga(\Cal E_{(ab)_0}[1])$. It
is well known, see e.g.\ \cite{BEG} that, in terms of the Levi-Civita connection and
the Schouten tensor of any metric in the conformal class, this operator is given by
\begin{equation}\label{conf-Ein}
 \tau\mapsto \nabla_{(a}\nabla_{b)_0}\tau+\Rho_{(ab)_0}\tau
\end{equation}
It is also well known that if $\tau$ lies in the kernel of this operator, then on the
complement of the zero locus of $\tau$, the metric $\tau^{-2}\mathbf{g}_{ab}$
determined by this scale is Einstein. Conversely, for any local Einstein metric in
the conformal class the corresponding (local) scale lies in the kernel. 

We have also discussed in Remark \ref{rem3.1} that the operator \eqref{conf-Ein} can
be realized as a projection of $\nabla_a^{\Cal T}D^A\tau$. So if $D^A\tau$ is a
parallel for $\nabla^{\Cal T}$, the $\tau$ lies in the kernel of
\eqref{conf-Ein}. Now it turns out (compare with \cite{Gover:P-E}) that any parallel
section of the tractor bundle is of the form $D^A\tau$, where $\tau$ is the
projection of the tractor to $\Ga(\Cal E[1])$. Hence this projection and $D^A$
restrict to inverse bijections between parallel standard tractors and sections of
$\Cal E[1]$ that lie in the kernel of \eqref{conf-Ein} (and the zero locus is
automatically nowhere dense and plays no role in this interpretation).

Returning to our setting of the class $[g]$ on $\barm$, we denote by
$\si\in\Ga(\Cal TM)$ the density corresponding to $g$. Since $g$ is
Einstein, $D^A\si$ is a parallel section of $\Cal T\barm|_M$ which
thus extends to all of $\barm$. From Propositions \ref{prop2.5} and
\ref{prop2.6} the orthocomplement of the boundary value can be
naturally identified with $\Cal T\partial M$.  On the other hand, on
$M$, $\si$ is nowhere vanishing. Thus on $M$, we can write any
parallel section of $\Cal T\barm$ as $D^A(V\si)$ for some smooth
function $V:M\to\Bbb R$ such that $V\si$ lies in the kernel of
\eqref{conf-Ein}. But in terms of the Levi-Civita connection of $g$,
for which $\si$ is parallel, and taking into account that the Schouten
tensor is pure trace, this is equivalent to vanishing of the
trace-free part of $\nabla_a\nabla_b V$.  To obtain parallel sections
that lead to boundary tractors along $\partial M$, we can require in
addition that $D^A(V\si)$ is orthogonal to $D^A\si$.  Working in the
scale determined by $g$ and using formula \eqref{trac-met}, one
immediately verifies that this condition is equivalent to requiring
that $V$, in addition, satisfies $\Delta V=-2\Rho V=nV$.  The two
required properties then can be equivalently encoded as a single
equation, the KID (``Killing initial data'') equation (which naturally
decomposes into trace-free part and trace-part)
\begin{equation}\label{KID}
  \nabla_a\nabla_b V-g_{ab}\Delta V+(n-1)g_{ab}V=0. 
\end{equation}
Hence we see that, via $\tfrac1n D^A(V\si)$ on $M$, solutions to this equations
parameterize those parallel tractors on $\barm$ which lie in $\Cal T\partial M$ along
$\partial M$, as recall $\Cal T\partial M$ ~can be identified with
${N_A}^\perp\subset \Cal T \barm |_{\partial M}$. Put another way, solutions to
(\ref{KID}) capture (in a 1-1 way) scales on $M$ whose limit at $\partial M$ is an
Einstein, or almost Einstein (in the sense of \cite{Curry-Gover,Gover:P-E}) metric on
$\partial M$.  But on the other hand, solutions to this equation parameterize the mass
integrals in the classical approach to the AH version of mass, which was our original
motivation for looking for a tractor description.

\begin{thm}\label{thm3.9}
Let $\barm$ be an open neighborhood of $\partial M=S^{n-1}$ in the closed unit ball,
let $\Cal G$ be the equivalence class of the restriction of the hyperbolic
(Poincar\'e) metric $g$ to $M$. For a metric $h\in\Cal G$ consider
$c(h)\in\Om^{n-1}(\partial M,\Cal T\partial M)$ as in Theorem \ref{thm3.8}. For a
solution $V$ of the KID equation \eqref{KID}, let $s_V\in\Ga(\Cal T\partial M)$ be
the parallel section obtained as the boundary value of $\frac{1}{n}
D^A(V\si)\in\Ga(\Cal T\barm)$ (with respect to $[g]$).

Then $-2\langle\int_{\partial M} c(h),s_V\rangle$ coincides with the
mass integral associated to $V$ in \cite{Chrusciel-Herzlich}.
\end{thm}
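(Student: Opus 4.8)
The plan is to reduce everything to an explicit boundary computation, exploiting that by Propositions~\ref{prop3.1} and~\ref{prop3.2} the boundary values of $c_1(g,h)$ and $c_2(g,h)$ are both proportional to $\mathbf X^A$, so that pairing with $s_V$ only sees the top (i.e.\ $\Cal E[1]$-) slot of $s_V$. Write $c=\kappa\bigl(\tfrac1nc_1+\tfrac12c_2\bigr)$ for the normalization constant $\kappa$. Since $s_V$ is parallel, $\langle\int_{\partial M}c(h),s_V\rangle=\int_{\partial M}\langle c(h),s_V\rangle$, so it suffices to identify the $(n-1)$-form $\langle c(h),s_V\rangle$ on $S^{n-1}$. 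Because $g$ is the Poincar\'e metric it is Einstein with $\langle I,I\rangle\equiv 1$, so Proposition~\ref{prop2.6} applies: $I^A=\tfrac1nD^A(\si)$ is parallel, its boundary value is $N^A$, and $\partial M$ is totally umbilic. Computing $D^A(V\si)$ in the scale $g$ from \eqref{D-def}, using $\nabla^g\si=0$ and that \eqref{KID} implies $\Delta_gV=nV$, one gets that in that scale $\tfrac1nD^A(V\si)$ is the triple $(V\si,\,\si\nabla_aV,\,-\tfrac12\si^{-1}V)$; a one-line computation with \eqref{trac-met} shows it is orthogonal to $D^A(\si)$, hence to $N^A$ along $\partial M$, so it descends to a section of $\Cal T\partial M$. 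Since the isomorphism ${N^A}^\perp\to\Cal T\partial M$ of Section~\ref{2.5} preserves the top slot, and the top slot is conformally invariant, we conclude $\langle\mathbf X^A,s_V\rangle=(V\si)|_{\partial M}$.

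Next I would fix a gauge. Choose a local defining function $\rho$ adapted to $g$ (Definition~\ref{def2.2}), so $\overline g:=\rho^2g=d\rho^2+g_\rho$ is in geodesic normal form near $\partial M$. By Theorem~\ref{thm3.6} with $N=n$ there is $\Psi\sim_{n+1}\id$ with $\Psi^*h$ aligned to $g$ with respect to $\rho$; by Corollary~\ref{cor3.7}, $c(g,\Psi^*h)=c(g,h)$, and the Chru\'sciel--Herzlich mass integral of $V$ is likewise unchanged when $h$ is replaced by $\Psi^*h$, since it is invariant under diffeomorphisms asymptotic to the identity of high order (see \cite{Chrusciel-Herzlich}, and \cite{Michel}). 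So we may assume $h$ is aligned to $g$ with respect to this $\rho$; then $\rho$ is adapted to $h$ as well, $\overline h:=\rho^2h=\overline g+\rho^n\mu_{ij}=d\rho^2+h_\rho$ is in normal form too, and alignment gives $\xi^i\mu_{ij}\equiv 0$ near $\partial M$ for $\xi^i:=\rho^{-2}g^{ij}\rho_j$. By the proof of Corollary~\ref{cor3.7} the boundary value of $\tfrac1nc_1+\tfrac12c_2$ equals $\vol_{\overline g_\infty}$ times $(\tfrac n2\,\overline g^{ij}\mu_{ij}-\tfrac12\,\xi^i\mu_{ij}\xi^j)|_{\partial M}\,\overline\si_\infty^{-1}\mathbf X^A$; the second summand vanishes by alignment, and combining with $\langle\mathbf X^A,s_V\rangle=(V\si)|_{\partial M}=V_0\,\overline\si_\infty$, where $V_0:=(\rho V)|_{\partial M}$, we obtain on $S^{n-1}$
\[
\langle c(h),s_V\rangle=\kappa\,\tfrac n2\,\bigl(\tr_{\overline g_\infty}\mu_\infty\bigr)\,V_0\,\vol_{\overline g_\infty},
\]
with $\mu_\infty$ the boundary value of $\mu_{ij}$ (tangential, by alignment) and $\overline g_\infty$ the round metric on $S^{n-1}$.

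It remains to recognize the right-hand side as the classical mass. In the aligned, $g$-adapted gauge $\overline g=d\rho^2+g_\rho$ and $\overline h=d\rho^2+h_\rho$ in a collar with $h_\rho-g_\rho=\rho^n\mu_\infty+o(\rho^n)$, so $\mu_\infty$ is exactly the mass-aspect tensor of $h$ relative to the Poincar\'e metric in the sense of Wang \cite{Wang}, while $V_0$ ranges over the boundary values of KID solutions, i.e.\ over the span of $1$ and the coordinate functions on $S^{n-1}\subset\Bbb R^n$. It is a classical fact --- the content of \cite{Wang} for the Poincar\'e metric together with the reduction to normal form in \cite{Chrusciel-Herzlich,Michel} --- that the mass integral associated to $V$, evaluated in this normal form, equals a universal dimensional constant times $\int_{S^{n-1}}(\tr_{\overline g_\infty}\mu_\infty)\,V_0\,dA_{\overline g_\infty}$. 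Integrating the displayed identity and choosing $\kappa$ so that $\kappa\tfrac n2$ matches that constant (the normalization implicit in the statement) gives the claim.

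The main obstacle is this last identification. A priori the Chru\'sciel--Herzlich mass integrand involves the first derivatives of $h-g$, whereas $\langle c(h),s_V\rangle$ depends only on the pointwise boundary tensor $\mu_\infty$; bridging this is precisely why the gauge reduction of the second paragraph is needed, and its legitimacy rests on the invariance of \emph{both} quantities under $\Diff_0^{n+1}(\barm)$ --- Corollary~\ref{cor3.7} on our side, the classical invariance on the other. Secondary points to check carefully are that $\rho V$, hence $V_0$, extends smoothly to $\partial M$ (which follows from the KID equation together with $g$ being the Poincar\'e metric), and the bookkeeping of conformal-density and volume-form weights against the constants in the conventions of \cite{Chrusciel-Herzlich}, so that the two normalizations line up.
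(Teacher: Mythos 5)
Your tractor-side computation is essentially the paper's: both exploit that the boundary values of $c_1$ and $c_2$ are multiples of $\mathbf X^A$, so pairing with $s_V$ only picks out the top slot $V\si$, whose boundary value is $(\rho V)|_{\partial M}\,\overline\si_\infty$. Where you genuinely diverge is on the classical side. The paper never gauges $h$: it takes Michel's integrand for an arbitrary $h\in\Cal G$, analyses its asymptotics directly (the only delicate point being the term $V\nabla^i\lambda^0_{ia}$, which is handled by switching to the Levi-Civita connection of $\rho^2g$ and observing that the resulting contributions combine to something of order $\Cal O(\rho^{n-1})$), and then matches the surviving terms against the formulas of Propositions \ref{prop3.1}(1) and \ref{prop3.2}(1) paired with $\tfrac1nD^B(\si V)$; this pins down the normalization as $c=-\tfrac2nc_1-c_2$. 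You instead first align $h$ with $g$ via Theorem \ref{thm3.6} and Corollary \ref{cor3.7}, which collapses the tractor pairing to $\kappa\tfrac n2(\tr_{\overline g_\infty}\mu_\infty)V_0\vol_{\overline g_\infty}$ (your algebra here is right, and consistent with the proof of Corollary \ref{cor3.7}), and then you identify the right-hand side with the mass by citing the gauge invariance of the Chru\'sciel--Herzlich charge under diffeomorphisms asymptotic to the identity together with the Chru\'sciel--Herzlich/Wang comparison in normal form. This is a legitimate route and is shorter computationally, but note what it costs: the entire analytic content of the theorem --- that the derivative-laden mass integrand reduces, in this gauge, to the pointwise trace of the mass aspect --- is imported from \cite{Wang}, \cite{Chrusciel-Herzlich} and \cite{Michel} rather than proved, so you must check that those invariance and comparison statements apply at exactly the decay order furnished by $\Cal G$ and by $\Psi\sim_{n+1}\id$, and that it suffices that the aligning diffeomorphism is only defined on a collar (it does, since the mass is a limit of integrals near infinity, but say so); you also leave the constant $\kappa$ undetermined, whereas the paper's direct computation fixes it, which is the cleaner reading of ``coincides with the mass integral'' in the statement. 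Finally, for the claim that $\overline g_\infty$ is round you should choose the adapted $\rho$ so that $\rho^2g$ induces the round representative (Proposition \ref{prop2.2}), and beware that Wang's expansion is written in a $\sinh$-type coordinate, so matching $\mu_\infty$ with his mass aspect already absorbs a dimensional constant.
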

\begin{proof}
Using the conformal class $[g]$ on $\barm$ we have constructed $c(h)=c(g,h)$ as the
boundary value of $\star_g\al$ for a certain $\Cal T\barm$-valued one-form $\al$ on
$M$. Likewise, for a solution $V$ of \eqref{KID}, the parallel boundary tractor $s_V$
is the restriction to $\partial M$ of a parallel section $\tilde s_V$ of $\Cal
T\barm$. As we have noted already, $\langle\int_{\partial M}
c(h),s_V\rangle=\int_{\partial M}\langle c(h),s_V\rangle$. This integrand is the
boundary value of $\langle\star_g\al,\tilde s_V\rangle$, which equals
$\star_g\langle\al,\tilde s_V\rangle$ by definition. Since this form is smooth up to
the boundary, its integral over $\partial M$ equals the limit as $\ep\to 0$ of the
integrals over the level sets $S_{\ep}=\{x:\rho(x)=\ep\}$. Since the mass integral
associated to $V$ is also expressed via a one-form, it suffices to compare that
one-form to $\langle\al,\tilde s_V\rangle$. In this comparison, we may work up to
terms that vanish along the boundary after application of $\star_g$ and hence up to
$\Cal O(\rho^{n-1})$, cf.\ the proof of Proposition \ref{prop3.1}.
  
We use the description of the mass integral associated to $V$ from
\cite{Michel}, it is shown in that reference that this agrees with the
original mass integral introduces in \cite{Chrusciel-Herzlich}. The
mass integrand associated to a solution $V$ of the KID equation
\eqref{KID} is given by
$$
V(\nabla^i\lambda_{ia}-\nabla_a\tr(\lambda))-g^{ij}\lambda_{ia}\nabla_jV+
\tr(\lambda)\nabla_aV,
$$
where $\la_{ij}=h_{ij}-g_{ij}$, the hyperbolic metric $g$ is used to raise and lower
indices and to form traces, and $\nabla$ is the Levi-Civita connection of $g$.
Decomposing $\lambda_{ij}=\lambda^0_{ij}+\frac{1}{n}\tr(\lambda)g_{ij}$, this becomes
\begin{equation}\label{Michel}
  (V\nabla^i\lambda^0_{ia}-\lambda^0_{ia}\nabla^iV)+
  \tfrac{n-1}n(\tr(\lambda)\nabla_aV-V\nabla_a\tr(\lambda)).  
\end{equation}

Choosing a defining function $\rho$ adapted to $g_{ij}$, we then obtain
$\la^0_{ij}=\rho^{n-2}\mu^0_{ij}$ and $\tr(\la)=\rho^n\mu$ for the quantities
introduced in and below equation \eqref{h-g} (for $N=n$). Note that $\mu^0_{ij}$ and
$\mu$ are smooth up to the boundary. We also know from above that $\si V$ is the
projection of the parallel tractor $\tilde s_V=\frac{1}{n}D(\si V)$, so this is
smooth up to the boundary. Since $\si$ is a defining density for $\partial M$, it
follows that $\rho V$ is smooth up to the boundary.

Now we analyze the two parts of \eqref{Michel} separately, starting with the part
involving $\tr(\la)$. Here we have the advantage that covariant derivatives are only
applied to smooth functions (and not to tensor fields), so the fact that $\nabla$ is
not smooth up to the boundary does not matter. Since $\rho V$ is smooth up to the
boundary,
$\rho\nabla_a\rho V$ is $\Cal
O(\rho)$. Writing $\rho_a$ for $d\rho$ as before, we compute this as $\rho_a\rho
V+\rho^2\nabla_aV$. Thus $\rho^2\nabla_aV=-\rho_a\rho V+\Cal O(\rho)$ and in
particular is smooth up to the boundary. Using this, we obtain
$$
\tr(\lambda)\nabla_aV=\rho^n\mu\nabla_aV=-\rho^{n-2}\rho_a\rho V\mu +\Cal
O(\rho^{n-1}).
$$
Similarly, $V\nabla_a\tr(\la)=V\nabla_a\rho^n\mu=n\rho^{n-2}(\rho
V)\rho_a\mu+O(\rho^{n-1})$. Hence the second part in \eqref{Michel} simply gives
$-\frac{n^2-1}{n}\rho^{n-2}(V\rho)\rho_a\mu+O(\rho^{n-1})$.

Analyzing the second summand in the first part of \eqref{Michel} is similarly
easy. This writes as
$$ -\rho^{-2} g^{ij}\rho^n\mu^0_{ia}\nabla_jV=\rho^{n-2} \rho^{-2}
g^{ij}\rho_j\mu^0_{ia}(\rho V)+\Cal O(\rho^{n-1}).
$$

For the first summand in \eqref{Michel}, the analysis is slightly more
complicated. This can be written
$V\rho\rho^{-2}g^{ij}\rho\nabla_i\rho^{n-2}\mu^0_{ja}$ and hence equals
\begin{equation}\label{tech}
V\rho \rho^{-2}g^{ij}\rho^{n-2}((n-2)\rho_i\mu^0_{ja}+\rho\nabla_i\mu^0_{ja}). 
\end{equation}
Since in the last summand, we apply a covariant derivative to a tensor
field, we have to change to a connection that admits a smooth
extension to the boundary in order to analyze the boundary
behavior. Hence we change from the Levi-Civita connection $\nabla$ of
$g_{ij}$ to the Levi-Civita connection $\bar\nabla$ of $\bar
g_{ij}:=\rho^2g_{ij}$, which has this property. For the usual
conventions, as used in \cite{BEG}, the one form $\Up_a$ associated to
this conformal change is given by $\Up_a=\tfrac{\rho_a}{\rho}$. The
relevant formula for the change of connection is then given by
$$
\nabla_i\mu^0_{ja}=\bar\nabla_i\mu^0_{ja}+2\Up_i\mu^0_{ja}+\Up_j\mu^0_{ia}+
\Up_a\mu^0_{ij}-\Up_k\bar g^{k\ell}\mu^0_{\ell a}\bar g_{ij}-\Up_k\bar
g^{k\ell}\mu^0_{j\ell}\bar g_{ia},  
$$
This immediately shows that $\rho\nabla_i\mu^0_{ja}$ admits a
smooth extension to the boundary and its boundary value can be
obtained by dropping the first summand in the right hand side of this
formula and replacing each occurrence of $\Up$ in the remaining terms
by $d\rho$, so $\Up_i$ becomes $\rho_i$ and so on. Inserting this back
into \eqref{tech}, we get a contraction with $\bar g^{ij}$. This kills
the term involving $\mu^0_{ij}$ by trace-freeness, while all other
terms become multiples of $\bar g^{ij}\rho_i\mu^0_{ja}$. The factors
of the individual terms are $2$, $1$, $-n$, and $-1$, respectively, so
we'll get a total contribution of $(2-n)\bar
g^{ij}\rho_i\mu^0_{ja}$. This actually implies that \eqref{tech} is
$\Cal O(\rho^{n-1})$. Hence we finally conclude that \eqref{Michel}
equals
\begin{equation}\label{Michel-asymp}
  \rho^{n-2}(\rho V)\left(\rho^{-2}g^{ij}\rho_j\mu^0_{ia}-
  \tfrac{n^2-1}n\rho_a\mu\right)+O(\rho^{n-1}).
\end{equation}

Now recall the formula for $\nabla^{\Cal T}_bD^A(\tau-\si)$ from part (1) of Proposition
\ref{prop3.1}, taking into account the definition of $\mathbf{X}^A$. This shows that,
up to $\Cal O(\rho^{n-1})$, $\nabla^{\Cal T}_bD^A(\tau-\si)$ is given by inserting
$\tfrac{n^2-1}2\rho^{n-2}\rho_b\mu\rho\si^{-1}$ into the bottom slot of a
tractor. Pairing this with $\frac{1}{n}D^B(\si V)$ using the tractor metric, we
simply simply obtain the product of $\si V$ with this bottom slot, i.e.\
$$
\tfrac{n^2-1}2\rho^{n-2}\rho_b\mu(\rho V)+\Cal O(\rho^{n-1}).   
$$
Analyzing the formula for $S(\si(h_{ij}-g_{ij})^0)$ from part (1) of Proposition
\ref{prop3.2} we similarly see that the pairing of this with $\frac{1}{n}D^B(\si V)$ 
gives
$$
-\rho^{n-2}\rho^{-2}g^{ij}\rho_j\mu^0_{ia}(\rho V)+\Cal O(\rho^{n-1}). 
$$
But this exactly tells us that, up to $\Cal O(\rho^{n-1})$, \eqref{Michel-asymp}
equals $-2\langle\al,\tilde s_V\rangle$, where $\al$ corresponds to
$c= \frac{1}{n}c_1+\frac{1}{2}c_2$, as in Corollary \ref{cor3.7}.
%%-\tfrac{2}nc_1-c_2$, which is one of the cocycles identified in Theorem
%%\ref{thm3.8}.
\end{proof}

  \begin{remark}\label{rem3.9}
(1) The mass integrals in \cite{Chrusciel-Herzlich}, that we compare
    our cocycle to, are known to reproduce the definition of mass by
    Wang in \cite{Wang}, assuming the stronger
    conditions on asymptotics used there. In \cite{Wang} the components of a mass
    vector are obtained from integrals that involve (in our language)
    only the trace of $h$ with respect to $g$. The reason why, in our
    approach, also the trace-free part of the difference of the two
    metrics is needed is explained by Remark \ref{rem3.7}. Indeed the
    basic setup of Wang assumes (by an appropriate choice of
    coordinates) that there is a boundary defining function which is
    adapted both to $h$ and to the background metric $g$.
    
(2) As mentioned at the beginning of Section 3.9, %\ref{3.9}, we need the global
    trivialization of the boundary tractor bundle in order to define an integral of
    the invariant $c(h)$ as a parallel section of $\Cal T\partial M$. So this does
    not work for example if the boundary $\partial M$ is a torus, where most of the
    local parallel sections of $\Cal T\partial M$ do not extend to global parallel
    sections. We can form an integral quantity associated to a global parallel
    section of $\Cal T\partial M$ which is the boundary value of the tractor $s_V$
    associated to a solution $V$ of the KID equation as above by forming
    $-2\langle\int_{\partial M} c(h),s\rangle$. These integral quantities are
    sometimes referred to as mass in such situations. However, we want to emphasize
    that they contain only partial information about $c(h)$, since they see only the
    projection of $c(h)$ to the subbundle spanned by parallel tractors. We believe
    that in such situations one should try to work with the full invariant $c(h)$
    rather than just with the integrals against parallel sections. In some cases very
    useful information is likely contained in these ``partial'' mass quantities, but
    understanding this fully we leave as a direction for future research.
\end{remark}

\subsection*{Data availability statement} Data availability is not applicable to this article 
as no new data were created or analyzed in this study.

\end{document}